\documentclass[12pt,a4paper]{article}
\usepackage{amsmath,amsfonts,amsthm,amssymb,latexsym}
\usepackage{xspace}
\usepackage{lipsum}
\usepackage{scrextend}
\usepackage{enumitem}
\usepackage{comment}
\usepackage{graphicx}

\newcommand{\Z}{{\mathbb Z}}

\newcommand{\emptyword}{\epsilon}

\def\disting#1{{\mathbf d}_{#1}}
\def\leftnbr#1{{\mathbf d}^L_{#1}}
\def\rightnbr#1{{\mathbf d}^R_{#1}}
\newcommand{\vv}{p}
\newcommand{\xx}{x} 
\newcommand{\ee}{E} 
\newcommand{\ii}{I}
\newcommand{\jj}{J}
\newcommand{\kk}{K}

\newcommand{\cR}{\mathcal{R}}

\newcommand{\fail}{{\bf{fail}}}
\renewcommand{\stop}{{\bf{stop}}}
\newcommand{\flip}{\textsf{flip}}
\newcommand{\true}{\texttt{true}}
\newcommand{\false}{\texttt{false}}
\def \p#1{{\rm pref}[#1]}
\def \s#1{{\rm suf}[#1]}
\def \f#1{{\rm f}[#1]}
\def \l#1{{\rm l}[#1]}

\newtheorem{theorem}{Theorem}[section] 
\newtheorem{lemma}[theorem]{Lemma} 
\newtheorem{proposition}[theorem]{Proposition}
\newtheorem*{proposition*}{Proposition}

\theoremstyle{definition}
\newtheorem{definition}[theorem]{Definition}
\newtheorem{example}[theorem]{Example}

\newtheorem{procedure}[theorem]{Procedure}

\newenvironment{mylist}{\begin{list}{}{
\setlength{\parskip}{0mm}
\setlength{\topsep}{2mm}
\setlength{\parsep}{0mm}
\setlength{\itemsep}{0.5mm}
\setlength{\labelwidth}{7mm}
\setlength{\labelsep}{3mm}
\setlength{\itemindent}{0mm}
\setlength{\leftmargin}{12mm}
\setlength{\listparindent}{6mm}
}}{\end{list}}
\newenvironment{myproclist}{\begin{list}{}{
\setlength{\parskip}{0mm}
\setlength{\topsep}{0mm}
\setlength{\parsep}{0mm}
\setlength{\itemsep}{2mm}
\setlength{\labelwidth}{0mm}
\setlength{\labelsep}{0mm}
\setlength{\itemindent}{0mm}
\setlength{\leftmargin}{4mm}
\setlength{\listparindent}{3mm}
}}{\end{list}}

\parskip 1.0ex plus 0.5ex minus 0.5ex
\parindent 0pt
\title{Artin groups of type $(2,3,n)$}

\author{Derek F. Holt and Sarah Rees}

\begin{document}
\maketitle

\begin{abstract}

Our main theorem is that the word problem in the
Artin group
$$G= \langle a,b,c \mid aba=bab, ac=ca, {}_{n}(b,c) = {}_{n}(c,b) \rangle,$$
for $n \geq 5$ can be solved
using a system $\cR$ of length preserving rewrite rules that, together
with free reduction, can be used to reduce any word over $\{a,b,c\}$ to a
geodesic word in $G$, in quadratic time.
	This result builds on work of Holt and Rees, and of Blasco, Cumplido and Morris-Wright, which proves the same result for all Artin groups that are either sufficiently large or 3-free.

Since every rank 3 Artin group is either spherical or in one of the categories 
covered by the previous results on which we build, it follows that any rank 3 Artin group has quadratic Dehn function. However we note that this and much more 
is a consequence of very recent work of Haettel and Huang; our contribution is to provide a particular kind or rewriting solution to the word problem for the
	non-spherical rank 3 Artin groups (and more).
\end{abstract}

\section{Introduction}
\label{sec:intro}

Our main result in this article  are the following.
\begin{theorem}
	\label{thm:main}
Let $G$ be the Artin group
$$G= \langle a,b,c \mid aba=bab, ac=ca, {}_{n}(b,c) = {}_{n}(c,b) \rangle,$$
for $n \geq 5$.
Then there is a system $\cR$ of length preserving rewrite rules that, together
with free reduction, can be used to reduce any word over $\{a,b,c\}$ to a
geodesic word in $G$, in quadratic time.

Furthermore, any two geodesic words representing the same element of $G$ are
related
by a sequence of rules in $\cR$, each of those involving just two generators. 
\end{theorem}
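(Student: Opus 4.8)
The plan is to split the statement into a rank-2 \emph{base case} for the three parabolic subgroups and a \emph{propagation} step that carries rank-2 moves through arbitrary geodesics. The crucial simplification is that every rule in $\cR$ is length preserving: if $u$ is geodesic and $u \to u'$ is a rule application, then $u'$ represents the same element and has the same length, so $u'$ is again geodesic. Hence the composite reductions (a length-preserving rule followed by a free reduction) never apply to a geodesic, and the desired statement is purely one of \emph{connectivity}: I must show that the two-generator rules of $\cR$ act transitively on the set of geodesic words representing any fixed $g \in G$. Equivalently, writing $u \sim v$ when $u,v$ are geodesics joined by a finite sequence of two-generator rules, I must show that $\sim$ is the full relation ``$u,v$ geodesic and $u =_G v$''.

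First I would dispose of the base case, in which $u$ and $v$ both lie in one of the rank-2 parabolics $\langle a,b\rangle$, $\langle a,c\rangle$, $\langle b,c\rangle$. These are the dihedral Artin groups of types $A_2$, $A_1\times A_1$ and $I_2(n)$, whose geodesics are classical: two geodesics for the same element differ by repeated application of the single defining relator read as a length-preserving rewrite --- $aba \leftrightarrow bab$, $ac \leftrightarrow ca$, or ${}_{n}(b,c) \leftrightarrow {}_{n}(c,b)$ --- with no free reduction available since both words are geodesic. As each relator involves only two generators, the base case is exactly an instance of the claim and follows from the standard description of geodesics in rank-2 Artin groups.

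For the general case I would fix the ShortLex-least geodesic $w$ for $g$ and show that every geodesic $u$ with $u =_G g$ satisfies $u \sim w$; connectivity of all geodesics then follows by composing $u \sim w$ with the reverse of $v \sim w$. To reach $w$ I argue by induction on the ShortLex rank of $u$: if $u \ne w$, I must exhibit a two-generator rule that strictly lowers the rank while preserving length and the represented element. Producing such a rule is where the confluence analysis of $\cR$ from the first part is reused --- the rules were constructed so that a non-minimal geodesic always displays a local configuration to which some rule applies --- and the point to verify is that this rule, or a short sequence resolving it, can be taken inside a single rank-2 parabolic. This reduces the whole problem to checking the \emph{critical pairs}: configurations in which two rewrites from different parabolics compete.

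I expect the critical-pair analysis to be the main obstacle. The generator $a$ is shared by the pairs $\{a,b\}$ and $\{a,c\}$, and $b,c$ are likewise each shared by two pairs, so a single geodesic can admit competing rank-2 moves whose effect on the ShortLex rank is a priori ambiguous, and I must show that every such overlap can be resolved using only two-generator rules. The key simplifying feature is the commuting relation $ac = ca$: because $a$ slides freely past $c$, every interaction involving both $a$ and $c$ localizes, and the residual overlaps collapse to the braid case $\{a,b\}$ and the dihedral case $\{b,c\}$, where a finite inspection of how $aba=bab$ and ${}_{n}(b,c)={}_{n}(c,b)$ can overlap (using $n \ge 5$ to control the length-$n$ relator) completes the induction and hence the proof.
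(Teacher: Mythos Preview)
Your proposal is a high-level strategy that leaves the central difficulty unaddressed. The key step in your induction is the claim that every non-ShortLex-minimal geodesic admits a two-generator rule that lowers its rank; you justify this only by saying ``the rules were constructed so that a non-minimal geodesic always displays a local configuration to which some rule applies'', which is circular --- that is precisely the content to be proved. Your critical-pair sketch asserts that $ac=ca$ localises all three-generator interactions, but Example~\ref{eg:tricky_w_in_G} of the paper exhibits a word in which the $\{a,b\}$ and $\{b,c\}$ relators interlock so that no rightward sequence of two-generator (or even pseudo-two-generator) moves can effect the needed reduction; this is exactly why the paper is forced to introduce genuinely three-generator critical words into $\cR$. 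The ``finite inspection'' of overlaps you propose also cannot be literally finite, since $n$ ranges over all integers $\ge 5$. Finally, you treat the first half of the theorem (the existence and quadratic-time correctness of $\cR$) as already established and independent of the ``Furthermore'' clause, but in the paper it is not: both halves are proved simultaneously by a single construction, and your outline supplies no substitute for that construction.

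For comparison, the paper does not use ShortLex induction or Knuth--Bendix-style critical-pair resolution at all. It defines $W$ to be the set of words admitting no rightward reducing sequence, takes $\sim$ on $W$ to be generated by commutations and two-generator $\tau$-moves, and then builds a right action of $G$ on the quotient $\Omega = W/{\sim}$. Verifying that this action is well defined and respects the three defining relations (Propositions~\ref{lem:6.1}--\ref{lem:6.4}) simultaneously yields that $W$ is exactly the set of geodesics and that $\sim$ coincides with equality-in-$G$ on geodesics. The technical weight lies in a lengthy case analysis of how a single commutation or two-generator $\tau$-move interacts with an optimal RRS; your proposal contains no analogue of this analysis, and the handwave about $ac=ca$ does not replace it.
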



The notation ${}_{n}(x,y)$ used in the statement of Theorem~\ref{thm:main}
denotes an alternating string $xyxy \cdots$ of length $n$ beginning with $x$, 
as explained in Section~\ref{sec:notation}.
Throughout this article, $G$ will be as defined in the statement of
Theorem~\ref{thm:main}.

The faat any rank 3 Artin group has quadratic solution to the word problem
follows immediately from the combination of
Theorem~\ref{thm:main} and the proofs of  
quadratic bound for the word problem in spherical type Artin groups 
(derived in \cite{DehornoyParis} for all Garside groups), 
in large type groups (derived in \cite{HR12} as a consequence of their
automaticity), and for 3-free Artin groups (derived in \cite{BCMW}).
However we note that this result is already a consequence of the combination of Corollary G and Theorem D of a very recent preprint of Haettel and Huang \cite{HaettelHuang}.

Example~\ref{eg:n_atleast5} below shows that the condition that  $n$ be at
least $5$ is necessary for Theorem~\ref{thm:main} to hold.

That the word problem in the group $G$ is soluble was already known; it is 
rather the type of solution given by Theorem~\ref{thm:main} that we consider 
to be of interest, and what this tells us about not just the complexity of the
word problem but also the structure of geodesics in $G$.

We note that for $n>5$ the group $G$ is
\emph{locally non-spherical} (also known as \emph{2-dimensional})
and the solubility of its word problem is 
a consequence of the main theorem of \cite{Chermak}
(Chermak's algorithm reduces a word representing the identity to the empty word
using a combination of steps none of which increases its length, but does not
claim to reduce any input word to a geodesic representative); 
for $n \leq 5$ the group is of spherical type and the word problem is solved
in \cite{Deligne} (the result is generalised to Garside groups in \cite{DehornoyParis}).
However, our result shows that the method developed in \cite{HR12,HR13} and 
then extended in \cite{BCMW} can be extended further to apply to our group $G$.
The method allows any non-geodesic word to be rewritten to a geodesic in a series of 
length reducing steps, each step a sequence of length preserving moves followed by free reduction, taking linear time.

We observe that the structure of geodesics  in rank three Artin groups is an
important component in the proof of the main result of \cite{BCMWR}; the groups
$G$ for $n \geq 6$ were the only rank 3 groups that could not be included in
that result at the time of its publication, but now we have the machinery to
deal with them. 
We note also that Arye Juhasz has claimed a proof using small cancellation
theory that certain Artin groups have isoperimetric function that is at most polynomial of degree 6; his work considers all 3-free groups, as well as our
groups $G$ when $n\geq 6$ (\cite{Juhasz}, and private communication).   

The length preserving rules of the system $\cR$ that are referred to above and in 
the statement of Theorem~\ref{thm:main} generalise the $\tau$-moves
of~\cite{HR12,HR13,BCMW}. 
Each rule has the effect of replacing a \emph{critical subword} $u$ of the
input word by another word $\tau(u)$ of the same length, whose first and last
letters both differ from the corresponding letters within $u$. When a sequence
of such moves leads to a reduction of the input word, it does so by moving from
left to right within the word, replacing successive overlapping subwords, until
the letter at the righthand end of the final replacement subword is the inverse
of the subsequent letter in the word, and so a free cancellation is possible. 

In the large and sufficiently large groups studied in \cite{HR12,HR13},
the word problem can be solved as just described using critical subwords which
by definition are 2-generated with particular constraints. In \cite{BCMW} the
same type of solution was found to work with a more general definition of a
critical word that allowed it to be \emph{pseudo 2-generated} but not
necessarily 2-generated. In this article we define \emph{3-generated critical
words}, and prove that, if these are used alongside the others, then the same
type of solution works in our group $G$.

We observe that our arguments can be applied without modification to prove the
result of Theorem~\ref{thm:main} for those rank 3 Artin groups for which the
parameter $3$ is replaced by any integer greater than 3, and in that case no
$\tau$-moves to 3-generator critical words of type $\{a,b,c\}$ are necessary.
Of course in that case $G$ is 3-free, and so the results of \cite{BCMW} apply.
So this observation indicates the very close connection 
between our methods and those of \cite{BCMW}

Our article is structured as follows. After this introduction and a section
explaining our notation, we define critical words of three different types and
the $\tau$-moves that deal with them in Section~\ref{sec:critical_tau},
recalling the definitions of critical words that are 2-generated or pseudo
2-generated from \cite{HR12} and \cite{BCMW}, and then providing a definition
for 3-generated critical words. We observe some basic properties of these words
and their associated $\tau$-moves. 
Then in Section~\ref{sec:RRS} we define \emph{rightward reducing sequences} of
$\tau$-moves. We define a set $W$ to consist of all words that do not admit
such a sequence, and describe a procedure, Procedure~\ref{proc:unique_optRRS},
that can be used to reduces any word to a representative of the same element
within $W$ in quadratic time; its correctness is proved in
Proposition~\ref{prop:unique_optRRS}.
Finally Section~\ref{sec:proofs} contains the proof of Theorem~\ref{thm:main}.
That section starts with the statement of Theorem~\ref{thm:main_details}, which 
restates the result of Theorem~\ref{thm:main},
providing more detail; in particular the set $W$ is proved to be the set of all
geodesic representatives of the elements of $G$, and the set $\cR$ of
Theorem~\ref{thm:main} is found to be the set of all rightward reducing
sequences.  Technical details needed in the proof of
Theorem~\ref{thm:main_details} are provided in
Propositions~\ref{lem:6.1}--\ref{lem:6.4}, stated and proved after the proof 
of the theorem. 

We end this section with an example, which indicates why we needed to generalise
the work of \cite{BCMW} in order to deal with the group $G$.

\begin{example}
\label{eg:tricky_w_in_G}
Consider the word
$w:=(b,c)_{n-1}\cdot aba \cdot {}_{n-2}(c,b)x^{-1}$ in $G$,
where $x$ is equal to $c$ if $n$ is even or to $b$ if $n$ is odd.
A short examination reveals that $w$ admits no
rightward reducing sequence of $\tau$-moves on 2-generated or
pseudo 2-generated subwords (as defined in Section~\ref{sec:critical_tau}).
On the other hand, since $aba=_G bab$,
the word $w$ represents the same element in $G$ as
$w':=(b,c)_{n-1}\cdot bab\cdot {}_{n-2}(c,b)x^{-1}=
 (c,b)_n\cdot a\cdot {}_{n-1}(b,c)x^{-1}$,
and we can rewrite $w'$ through the rightward moving sequence of 2-generator
$\tau$-moves 
\[ (c,b)_n \rightarrow (b,c)_n,\,ca\rightarrow ac,\,{}_n(c,b)
  \rightarrow {}_n(b,c), \]
through which we have
\begin{eqnarray*}
	w' &\rightarrow& (b,c)_n\cdot a \cdot {}_{n-1}(b,c)x^{-1} \\
	&\rightarrow& (c,b)_{n-1} \cdot ac \cdot {}_{n-1}(b,c)x^{-1}
	= (c,b)_{n-1}\cdot a \cdot {}_n(c,b)x^{-1}\\
	&\rightarrow& (c,b)_{n-1}\cdot a \cdot {}_n(b,c)x^{-1},
\end{eqnarray*}
to derive from $w'$ a word ending with  the subword $xx^{-1}$.  There is
certainly a sequence of 2-generator $\tau$-moves followed by a free
cancellation that reduces $w$ (via $w'$)
to a shorter word representing the same element;
but it is not a sequence of moves that moves 
rightward within the word. There exists no such sequence, even using the 
more general $\tau$-moves of \cite{BCMW}.
We note that the proofs in \cite{HR12,HR13,BCMW} that the rewrite
systems of those articles reduce a word  to the empty word $\emptyword$ 
if and only if it represents the identity rely on the fact that $\tau$-moves
are applied in rightward reducing sequences.

In fact, the prefix $(b,c)_{n-1}\cdot aba$ of $w$ is a 3-generator 
critical subword in the sense of this article, and the move that 
replaces this by $(c,b)_{n-1}\cdot acb$ is the first step of a rightward moving
sequence of the more general $\tau$-moves of this article that leads to
reduction of $w$, as we shall see in Example~\ref{eg:tricky_w_in_G_2}
\end{example}

This article was motivated by the work of \cite{BCMW}, 
which provided a first generalisation of the methods of \cite{HR12,HR13}.  Our
proof is inspired by the proof of that article, and although this article can be read without
reference to the arguments of \cite{BCMW}, we have deliberately chosen notation
that is as close to the notation of \cite{BCMW} as was possible,  and will
refer to parts of that article at times, in order to aid comparison.

{\bf Acknowledgement:} Both authors acknowledge support from the
Leverhulme foundation grant no.RPG-2022-025 during the work on this article.

\section{Notation}\label{sec:notation}

We define $S := \{a,b,c\}$, the set of standard generators of the Artin group
$G$, and $A:= S \cup S^{-1}$, the set of standard generators and their inverses.

For $X \subseteq A$, we defined a \emph{word over $X$} to be a string of elements from $X \cup X^{-1}$; the element of that string are called the 
\emph{letters} of $w$. The \emph{length} of a word $w$ is its length as a
string. Elements of $S=\{a,b,c\}$ are called \emph{positive} generators,
and elements of $S^{-1}:= \{a^{-1},b^{-1},c^{-1}\}$ negative generators;
a word is called positive if it involves only positive generators,
negative if it involves only negative generators, and otherwise is called
\emph{unsigned}; a letter could be positive or negative.
We define the \emph{name} of a letter $x$ to be the positive generator within $\{x,x^{-1}\}$ to which it corresponds.
A word is called \emph{geodesic in $G$} if it has minimal length as a representative
of the element of $G$ that it represents.

For $x,y,z \in A$ with distinct names, we refer to 
words over $\{x,y\}$
as \emph{ $\{x,y\}$-words}, also as \emph{$2$-generator words},
and we refer to words over $\{x,y,z\}$
as \emph{ $\{x,y,z\}$-words}, also as \emph{3-generator words}.

For $x,y \in A$, as above, we define
${}_{n}(x,y)$ to be
the alternating string $xyxy \cdots$ of length $n$ beginning with $x$ and
$(y,x)_n$
to be the alternating string $\cdots yxyx$ of length $n$ ending with $x$
If $x,y$ are both positive, such a string is called \emph{positive alternating}
of length $n$, while
if $x,y$ are both negative, it is called \emph{negative alternating} of length $n$.

For any word $w$, we denote by $\f{w}$ and $\l{w}$ its first and last letter,
and by $\p{w}$ and $\s{w}$ its maximal proper prefix and maximal proper suffix.
Note that $w=\f{w}\s{w}=\p{w}\l{w}$.

\section{Critical words and $\tau$-moves}

\label{sec:critical_tau}
The concept of a critical word as a particular type of 2-generator word in an
Artin group was introduced in \cite{HR12},
and in that article and \cite{HR13} those words were an essential part of a
process for reduction in large and sufficiently large Artin groups. 
In this article we shall need also to define critical 3-generator words; these
will be of two kinds: pseudo 2-generated words (introduced in \cite{BCMW}) and
3-generator critical words of type $\{a,b,c\}$,
both defined in Section~\ref{sec:3gen} below.

\subsection{Critical words and $\tau$-moves involving just 2 generators}
\label{sec:2gen}

Critical 2-generator words are geodesic in 2-generated Artin groups 
\cite{MairesseMatheus} and vital to the recognition of and reduction to geodesics in those Artin groups that are studied in \cite{HR12,HR13,BCMW}.
We recall the definition and some technical results from \cite{HR12,HR13} below,
as well as some further technical lemmas that we shall need in this article;
the proofs could easily be omitted from a first reading.

Let $X=\{x,y\}$ be a subset of $S$ with $x \neq y$, and let $m_{xy}$ be the
integer associated with $x,y$ in the presentation of $G$.
The element $\Delta$
of the subgroup $\langle x,y \rangle$ of $G$ that is represented by the 
alternating product $(x,y)_{m_{xy}}$ 
is known as the \emph{Garside element} of $\langle x,y \rangle$ and is 
also represented by the other alternating product $(y,x)_{m_{xy}}$.
Let $v$ be a word $w$ over $X$.
Following \cite{HR12} we  define $p(v)$ to be the minimum of $m_{xy}$ and the 
maximal length of a positive alternating subword of $v$,
and
$n(v)$ to be the minimum of $m_{xy}$ and the 
maximal length of a negative alternating subword of $v$.

\begin{definition}[2-generator critical words]
	\label{gen:2gen_critical}
The 2-generator word $u$ is defined to be \emph{critical} if
\begin{mylist}
\item[(i)] $p(u)+n(u)=m_{xy}$; and
\item[(ii)] either 
	\begin{mylist}
	\item[(a)] $u$ is positive of the form ${}_{m_{xy}}(x,y)\xi$
	or $\xi(x,y)_{m_{xy}}$
	where $p(\xi)<{m_{xy}}$, 
or \item[(b)]
	$u$ is negative of the form ${}_{m_{xy}}(x^{-1},y^{-1})\xi$ 
	or $\xi(x^{-1},y^{-1})_{m_{xy}}$ 
	where $n(\xi)<{m_{xy}}$, 
or \item[(c)] $u$ is unsigned, and has one of the two forms
	${}_{p(u)}(x,y)\xi (z^{-1},t^{-1})_{n(u)}$
	or 
	${}_{n(u)}(x^{-1},y^{-1})(x,y)\xi (z,t)_{p(u)}$,
	where $\{z,t\}=\{x,y\}$.
	\end{mylist}
\end{mylist}
\end{definition}

\begin{lemma}\label{lem:critlintest}
We can check in linear time whether a $2$-generator word is critical.
\end{lemma}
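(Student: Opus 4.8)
The plan is to check each defining condition of criticality in turn and show that, given the word $u$ as an array of letters, each check runs in time linear in $|u|$. First I would compute the names of the letters and verify that $u$ is a genuine $2$-generator word over some $X=\{x,y\}\subseteq S$; this is a single left-to-right pass recording the (at most two) distinct names that occur and aborting if a third appears, after which $m_{xy}$ is read off from the presentation in constant time. Along the same pass I would classify $u$ as positive, negative, or unsigned by tracking whether any positive and any negative letter has been seen.

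Next I would compute the two statistics $p(u)$ and $n(u)$ of Definition~\ref{gen:2gen_critical}. The quantity $p(u)$ is the minimum of $m_{xy}$ and the length of the longest positive alternating subword of $u$; the longest positive alternating run can be found by a standard single scan that maintains the length of the current maximal alternating positive block (resetting whenever two consecutive letters fail to alternate in name or a negative letter is met) and tracks the running maximum, capping the result at $m_{xy}$. The computation of $n(u)$ is symmetric. This gives condition~(i), namely $p(u)+n(u)=m_{xy}$, by a single comparison. Since each of these is one linear scan with constant work per letter, the total remains linear.

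Finally I would verify condition~(ii). Having already determined the sign-type of $u$, only one of the cases (a), (b), (c) is relevant. In each case the requirement is that $u$ begin or end with a specified alternating block of the appropriate length ($m_{xy}$ in cases (a),(b); $p(u)$ and $n(u)$ in case~(c)) and that the indicated interior statistic be strictly smaller than the stated bound. Checking that a designated prefix or suffix of prescribed length is exactly the alternating string ${}_{k}(x,y)$ (or its signed variant) is a comparison of $k\le |u|$ consecutive letters against the alternating pattern, hence linear; the strict-inequality conditions such as $p(\xi)<m_{xy}$ reuse the already-computed statistics, or are recomputed on the relevant factor in one more linear scan. Assembling these, criticality is decided by a constant number of linear passes, which is linear overall.

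I do not expect any genuine obstacle here, as the statement is a routine complexity bookkeeping claim once the definition is read carefully. The only point requiring mild care is case~(ii)(c), where the word must match one of \emph{two} prescribed templates and where the boundary alternating blocks have lengths $p(u)$ and $n(u)$ rather than the fixed $m_{xy}$; here I would simply test both templates (each a linear pattern match using the precomputed values) and accept if either succeeds. No step requires more than constant work per letter together with constantly many passes, so the linear-time bound follows immediately.
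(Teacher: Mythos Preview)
Your proposal is correct and follows essentially the same approach as the paper: compute $p(u)$ and $n(u)$ by a linear scan and then test the structural template in the definition. The paper's own proof is a terse two-line version of exactly this (``We can calculate $p(w)$ and $n(w)$. We can also test whether a signed word is critical in a single scan''), so your write-up simply supplies the routine details the authors omitted.
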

\begin{proof} For any $w$, We can calculate $p(w)$ and $n(w)$. We can also test whether a
signed word is critical in a single scan of the word. 
\end{proof}

\begin{lemma}\label{lem:critsubword}
Suppose that the critical $2$-generator word $u$ has a critical subword $v$,
and so $u = u_pvu_s$ for some words $u_p$, $u_s$. Then the word $vu_s$ has a
critical suffix.
\end{lemma}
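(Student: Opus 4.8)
The plan is to verify the two defining conditions of Definition~\ref{gen:2gen_critical} in turn: first to show that condition~(i) already holds for the whole word $w := vu_s$, and then to cut $w$ down to a suffix for which condition~(ii) holds as well. Throughout write $m := m_{xy}$, and recall that $p$ and $n$ are monotonic under passing to superwords. Since $v$ is a subword of $w$, and $w$ is a suffix, hence a subword, of $u$, we get
\[ p(v)\le p(w)\le p(u),\qquad n(v)\le n(w)\le n(u). \]
As $u$ and $v$ are both critical, $p(v)+n(v)=m=p(u)+n(u)$, so both chains collapse to equalities; in particular $p(w)=p(u)$, $n(w)=n(u)$ and $p(w)+n(w)=m$. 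Thus $w$, and indeed every word lying between $v$ and $u$ in the subword order, satisfies condition~(i); it remains only to arrange condition~(ii).

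I would treat the signed forms (a),(b) and the unsigned form (c) separately. Suppose first that $u$ is positive, of form~(a) (the negative form~(b) is identical after inverting). Then $n(u)=0$ and $p(w)=p(u)=m$, so $w$ carries a maximal positive alternating run of length at least $m$. I would take $s$ to be the suffix of $w$ beginning at the left end of the \emph{rightmost} such run. Writing $s={}_m(x,y)\xi$ for the appropriate ordering of $x,y$, the leading run of $s$ has length below $2m$ -- a bound inherited from $u$, since the form ${}_m(x,y)\xi$ or $\xi(x,y)_m$ with $p(\xi)<m$ cannot contain a positive run of length $2m$ or more -- while the rightmost choice guarantees that no positive run of length $m$ begins inside $\xi$. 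Together these give $p(\xi)<m$, so $s$ is critical of form~(a), and it is a suffix of $w$ as required.

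For the unsigned form~(c), $u$ ends in a genuine alternating block: a negative block of length $n:=n(u)$ in the first form of~(c), a positive block of length $p:=p(u)$ in the second. This terminal block is inherited unchanged by $w$. Assume the first form (the second is symmetric). Since $u$ is unsigned we have $p\ge 1$, so $p(w)=p$ forces $w$ to contain a positive run of length $p$, and any such run lies to the left of the terminal negative block. I would let $s$ be the suffix of $w$ starting at the left end of the \emph{rightmost} positive run of length $p$, so that $s$ begins with ${}_p(x,y)$ and ends with the terminal negative block of length $n$. Applying the squeeze of the first paragraph to $s$ gives $p(s)=p$ and $n(s)=n$, and $s$ visibly has the first form of~(c); hence $s$ is critical.

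The hard part is condition~(ii), not~(i): I must ensure that the block placed at the start of $s$ is really a maximal run of the prescribed length, and that the intervening word hides no second full-length run that would break the required form (the inequality $p(\xi)<m$ in the signed case, or the correct positioning of the displayed blocks in the unsigned case). Selecting the \emph{rightmost} eligible run is exactly the device that controls this, together with the observation that every alternating run occurring in $u$, and therefore in $w$, has length below $2m$. One might worry that a critical subword $v$ could fail to inherit the block ordering of $u$; but since $s$ is built solely from the terminal block of $u$ and the rightmost opposite run preceding it, the construction sidesteps this, and no separate argument about the orientation of $v$ is needed.
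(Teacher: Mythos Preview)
Your proof is correct and follows the same opening move as the paper: the squeeze $p(v)\le p(vu_s)\le p(u)$ together with $p(u)+n(u)=p(v)+n(v)=m$ forces $p(vu_s)=p(u)$ and $n(vu_s)=n(u)$. The paper stops there and says ``the result follows easily''; you have supplied the details that the paper omits, by explicitly constructing the critical suffix via the rightmost run of the appropriate length.

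One small simplification: in the positive case you do not actually need the bound that runs in $u$ have length below $2m$. If you take $s$ to start at the position $j$ that is the \emph{largest} index at which a positive alternating subword of length $m$ begins (rather than at the left end of the rightmost maximal run), then any alternating subword of length $m$ inside $\xi$ would begin at some index $\ge j+m>j$, contradicting the choice of $j$ immediately. This gives $p(\xi)<m$ without invoking any upper bound on run lengths. Your argument via the $2m$ bound is still valid, just slightly longer than necessary. In the unsigned case your construction is clean: the terminal signed block of $u$ is inherited by $w$, the rightmost opposite-sign run of the correct length sits strictly to its left, and the squeeze pins down $p(s)$ and $n(s)$ so that form~(c) is automatic.
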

\begin{proof}
Since $p(u)+n(u) = p(v)+n(v) = m_{xy}$ with $p(v) \le p(u)$ and $n(v) \le n(u)$,
we must have $p(u)=p(v)=p(vu_s)$ and $n(u)=n(v)= n(vu_s)$, and the result
follows easily. 
\end{proof}
 
Still following \cite{HR12}, we define an involution $\tau$ on
the set of all 2-generator critical words as follows.

\begin{definition}[2-generator $\tau$-moves]
First we define $\delta(x)$, to be the element of $X \cup X^{-1}$ that is equal in $G$ to the conjugate $\Delta x \Delta$ of $x$, and similarly we define $\delta(y)$, $\delta(x^{-1})$, $\delta(y^{-1})$. We see that if $m_{xy}$ is even then $\delta$ is the identity map, but otherwise $\delta$ has order 2 as a permutation of $X \cup X^{-1}$, swapping $x$ and $y$, and $x^{-1}$ and $y^{-1}$. Then for any word $\xi$ over $X$, we define
$\delta(\xi)$ to be the product of the letters that are the images under $\delta$ of the letters of $\xi$.

Now, for unsigned 2-generator critical words, we define $\tau$ by
\begin{eqnarray*}
\tau({}_p(x,y)\,\xi\,(z^{-1},t^{-1})_n)
&:=& {}_n(y^{-1},x^{-1})\,\delta(\xi )\,(t,z)_p,\\
\tau({}_n(x^{-1},y^{-1})\,\xi\,(z,t)_p)
&:=& {}_p(y,x)\,\delta(\xi )\,(t^{-1},z^{-1})_n.
\end{eqnarray*}

Then, for positive and negative 2-generator critical words, we define $\tau$ as
follows, where $\xi$ is assumed non-empty in the final four equations.
\begin{eqnarray*}
	\tau({}_{m_{xy}}(x,y))&:=& {}_{m_{xy}}(y,x),\\
	\tau({}_{m_{xy}}(x^{-1},y^{-1}))&:=& {}_{m_{xy}}(y^{-1},x^{-1})\\
	\tau({}_{m_{xy}}(x,y)\,\xi) &:=& \delta(\xi)\,(z,t)_{m_{xy}},
\quad\hbox{\rm where}\quad z=\l{\xi},\,\{x,y\}=\{z,t\},\\
	\tau(\xi\,(x,y)_{m_{xy}}) &:=& {}_{m_{xy}}(t,z)\,\delta(\xi),
\quad\hbox{\rm where}\quad z=\f{\xi},\,\{x,y\}=\{z,t\},\\
	\tau({}_{m_{xy}}(x^{-1},y^{-1})\,\xi) &:=& \delta(\xi)\,(z^{-1},t^{-1})_{m_{xy}},
\quad\hbox{\rm where}\quad z=\l{\xi}^{-1},\,\{x,y\}=\{z,t\},\\
	\tau(\xi\,(x^{-1},y^{-1})_{m_{xy}}) &:=& {}_{m_{xy}}(t^{-1},z^{-1})\,\delta(\xi),
\quad\hbox{\rm where}\quad z=\f{\xi}^{-1},\,\{x,y\}=\{z,t\}.
\end{eqnarray*}
\end{definition}

It is verified in \cite{HR12} that a 2-generator critical word $u$ and its
image $\tau(u)$ represent the same element of the subgroup
$\langle x,y\rangle$ of $G$; hence $u=_G \tau(u)$. Replacement of a critical
subword $u$ of a (probably) longer word by $\tau(u)$ is called a $\tau$-move.

In  this article we shall often refer to the $\tau$-moves we have just defined
on 2-generator critical words as \emph{2-generator $\tau$-moves} in order to
distinguish them from more general $\tau$-moves defined in \cite{BCMW} for
P2G words and in this article for (other) 3-generator words.

\begin{lemma}\label{lem:2gengeo}
Let $w$ and $w'$ be geodesic words over the standard generators of a
$2$-generator Artin group that represent the same element of the group. Then we
can transform $w$ to $w'$ using a sequence of 2-generator $\tau$-moves.
\end{lemma}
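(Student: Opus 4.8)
The plan is to prove Lemma~\ref{lem:2gengeo} by appealing to the classification of geodesic words in two-generator Artin groups, together with an induction that uses the machinery already established. We are working inside a single dihedral Artin group $\langle x,y\rangle$ with $m=m_{xy}$, and both $w$ and $w'$ are geodesics representing a common element $g$. The first step would be to recall (from \cite{MairesseMatheus}, cited above) the normal-form/geodesic description for these groups: every element has a canonical geodesic form, and two-generator critical words are precisely the geodesic words that are not \emph{uniquely} geodesic in a controlled way — a critical word $u$ and $\tau(u)$ are the two geodesic representatives of the same element that differ at both ends. So the key structural fact I would isolate is that whenever a geodesic word is \emph{not} in canonical form, it must contain a critical subword, and applying the corresponding $\tau$-move either moves it closer to canonical form or reduces some well-founded complexity measure.

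The main argument would then be an induction on a complexity measure comparing $w$ to a fixed canonical geodesic $w_0$ for $g$ (for instance, the length of the common prefix of $w$ and $w_0$, or a lexicographic measure on the letters). First I would show that any geodesic $w$ can be transformed by $\tau$-moves into the canonical form $w_0$: if $w\neq w_0$, locate the leftmost position where $w$ disagrees with $w_0$; the structure of geodesics in the dihedral group forces the relevant subword starting at that position to be critical, so a $\tau$-move applies and strictly increases agreement with $w_0$ (or decreases the measure). Since each $\tau$-move preserves the represented element and preserves length (it is length-preserving and $u=_G\tau(u)$), we stay within the geodesics for $g$ throughout, and the process terminates at $w_0$. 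Applying the same to $w'$, we get $w\to w_0$ and $w'\to w_0$ by sequences of $\tau$-moves; since $\tau$ is an involution, the sequence $w'\to w_0$ reverses to $w_0\to w'$, and concatenating yields $w\to w'$.

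The hard part, and the step I expect to be the genuine obstacle, is establishing that the leftmost disagreement between a non-canonical geodesic and the canonical form always exposes a critical subword to which $\tau$ applies — in other words, that the $\tau$-moves suffice to connect \emph{all} geodesics rather than just some of them. This requires a careful case analysis of the positive, negative, and unsigned forms in Definition~\ref{gen:2gen_critical}, checking that the boundary conditions $p(u)+n(u)=m_{xy}$ are exactly met at the point of disagreement; Lemma~\ref{lem:critsubword} is the tool that lets me pass from a critical subword to a critical suffix and thereby control where the move acts. I would handle this by reducing to the spherical dihedral case where $g$ is expressed relative to the Garside element $\Delta$, writing each geodesic as an alternating product possibly straddling a $\Delta$, and verifying that the two geodesic spellings of each such element differ by precisely one $\tau$-move. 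The remaining bookkeeping — that the measure strictly decreases and that termination follows — is routine once the critical-subword existence claim is in hand.
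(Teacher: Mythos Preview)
Your overall strategy is the same as the paper's: transform each of $w$ and $w'$ by $\tau$-moves to a common canonical geodesic, then use that $\tau$ is an involution to reverse one of the two sequences and concatenate. The paper's proof is simply a citation: \cite[Theorem~2.4]{HR12} already establishes that any geodesic can be brought to its shortlex normal form by a sequence of $2$-generator $\tau$-moves, so both $w$ and $w'$ go to the same normal form and the result follows in one line.

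What you call ``the hard part'' --- showing that a non-canonical geodesic always contains a critical subword whose $\tau$-move strictly improves a well-founded measure --- is exactly the content of that cited theorem. Your sketch of how to prove it (leftmost disagreement, case analysis on the positive/negative/unsigned forms in Definition~\ref{gen:2gen_critical}) is plausible and is roughly how \cite{HR12} proceeds, but it is real work and you have not actually carried it out here. So your proposal is correct in outline but redundant: within this paper the lemma is a corollary of an existing result, and you should cite \cite[Theorem~2.4]{HR12} rather than re-derive it.
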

\begin{proof}  By \cite[Theorem 2.4]{HR12} we can transform each of $w$ and $w'$
to their (equal) shortlex normal forms using a sequence of (2-generator)
$\tau$-moves. Since the reverse of a 2-generator $\tau$-move is also a 
2-generator $\tau$-move, the result follows.
\end{proof}

We observe the following facts about 2-generator critical words and
$\tau$-moves, which we shall need later. 
\begin{lemma}
	\label{lem:2gen_taufacts}
For any 2-generator critical word $u$, the names of the first letters of $u$
and $\tau(u)$ are distinct, as are the names of the last letters of
$u$ and $\tau(u)$.
\end{lemma}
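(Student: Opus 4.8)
The plan is to verify the claim by a direct case analysis over the defining equations for $\tau$, reading off the first and last letters of $u$ and of $\tau(u)$ from the explicit formulas and checking in each case that the corresponding names differ. Since the name of a letter distinguishes only between $x$ and $y$ (ignoring sign), it suffices in each case to confirm that the boundary letter flips between the $x$-column and the $y$-column as we pass from $u$ to $\tau(u)$.

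First I would dispose of the easy cases. For the unsigned critical words ${}_p(x,y)\,\xi\,(z^{-1},t^{-1})_n$ and ${}_n(x^{-1},y^{-1})\,\xi\,(z,t)_p$, and for the two full-block words ${}_{m_{xy}}(x,y)$ and ${}_{m_{xy}}(x^{-1},y^{-1})$, both ends of $u$ and of $\tau(u)$ are displayed alternating blocks. The leading $x$ or $x^{-1}$ of $u$ is replaced by a leading $y^{-1}$ or $y$ in $\tau(u)$, so the first letters have names $x$ and $y$ respectively; the trailing letters lie in blocks whose boundary names are the two entries of $\{z,t\}=\{x,y\}$ (or of $\{x,y\}$ itself in the full-block case), hence are distinct. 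The claim is thus immediate from the formulas here.

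The substantive cases are the four equations in which one end of $\tau(u)$ is $\delta(\xi)$, for instance $\tau({}_{m_{xy}}(x,y)\,\xi)=\delta(\xi)\,(z,t)_{m_{xy}}$. At the end opposite to $\delta(\xi)$ the analysis is as before: the last letter of $\tau(u)$ lies in the block $(z,t)_{m_{xy}}$ and has name $t$, while the last letter of $u$ is $\l{\xi}=z$ with name $z$, and $\{z,t\}=\{x,y\}$ gives the required distinctness. The delicate point is the $\delta(\xi)$ end. Here I would use that in a critical word the displayed alternating block is \emph{maximal}: since $p(\xi)<m_{xy}$, the first letter of $\xi$ must coincide with the last letter of ${}_{m_{xy}}(x,y)$, for otherwise the alternation would continue and the block would be longer. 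Combining this with the parity-dependent action of $\delta$ — the identity when $m_{xy}$ is even, the swap $x\leftrightarrow y$ when $m_{xy}$ is odd — one checks in both parities that $\f{\delta(\xi)}$ has name $y$, whereas $\f{u}=x$ has name $x$. The remaining three equations are treated by the same argument applied to the reversed word and to the negative analogue.

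I expect the $\delta(\xi)$ cases to be the only genuine obstacle, precisely because they require pairing block-maximality against the parity case split on $m_{xy}$; the other cases reduce to reading symbols off the formulas. A clean way to organise the whole proof is to note that $\tau$ is designed so that at each end it either exchanges an explicit block letter between $x$ and $y$ or applies $\delta$ to a letter of $\xi$ whose name is pinned down by maximality of the adjacent block. Both mechanisms produce the required flip, so the two-line check in each defining equation completes the proof.
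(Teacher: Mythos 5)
Your overall strategy is sound and, for the record, differs from the paper's, which disposes of this lemma by citing \cite[Proposition 2.1]{HR12}. The easy cases (the two unsigned forms and the two full alternating blocks) are handled correctly, and you have correctly isolated the crux: in the four equations where one end of $\tau(u)$ is $\delta(\xi)$, everything reduces to showing that $\f{\xi}$ has the same name as the last letter of the adjacent block ${}_{m_{xy}}(x,y)$, after which your parity analysis of $\delta$ goes through.

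However, your justification of that crux does not work. You argue that $p(\xi)<m_{xy}$ forces $\f{\xi}$ to repeat the last letter of the block, ``for otherwise the alternation would continue and the block would be longer.'' But $p$ is by definition truncated at $m_{xy}$, so a longer alternation is not detected by the inequality $p(\xi)<m_{xy}$. Concretely, take $m_{xy}=3$ and $u=ababa={}_{3}(a,b)\cdot\xi$ with $\xi=ba$: then $p(\xi)=2<3$ and $p(u)+n(u)=3$, so $u$ matches the form in Definition~\ref{gen:2gen_critical}(ii)(a) as literally stated, yet $\f{\xi}=b$ does not repeat the final letter $a$ of the block, and the formula gives $\tau(u)=\delta(ba)\,(a,b)_3=ab^2ab$, whose first letter has the same name as that of $u$. (Indeed $ababa$ has no geodesic representative differing from it in both its first and last letters, so it should not be critical at all.) The maximality of the displayed block --- equivalently, that $\f{\xi}$ coincides with the last letter of ${}_{m_{xy}}(x,y)$ --- is part of the precise definition of a critical word in \cite{HR12}, but it is an additional condition that has to be imported; it is not a consequence of $p(\xi)<m_{xy}$. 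As written, your argument (and, read literally, the statement itself) fails on words such as $ababa$; once the maximality condition is in place, your parity check at the $\delta(\xi)$ end does complete the proof.
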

\begin{proof}
This proved as part of \cite[Proposition 2.1]{HR12}
\end{proof}

\begin{lemma}\label{lem:abcrit1}
Let $v$ be an $\{a,b\}$-word such that $\f{v}$ and $\l{v}$ both have name $a$,
and suppose that $v$ can be transformed using a sequence of $\tau$-moves to a
word $b^{i}a^{j}b^{k}$ with $i,j,k$ nonzero integers.
Then $v$ is a critical word.

Furthermore, if $v$ is a positive or a negative word then $|j|=1$, and either
(i) $|i|=1$ and $v = a^k b^j a^j$,
or (ii) $|k|=1$ and $v = a^j b^j a^i$.
\end{lemma}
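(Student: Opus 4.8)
The plan is to leverage Lemma~\ref{lem:2gengeo}, which tells us that two geodesic words over a 2-generator Artin group representing the same element are related by a sequence of $\tau$-moves. My first step is to establish that $v$ is geodesic. Since $v$ can be transformed by $\tau$-moves to $b^i a^j b^k$, and each $\tau$-move preserves both length and the represented group element, it suffices to check that $b^i a^j b^k$ is geodesic in $\langle a,b\rangle$; this follows because words of the form $b^i a^j b^k$ (with the $a$-block flanked by $b$-blocks, or any such ``alternating-block'' word of the right shape) are known to be geodesic in the 2-generator Artin group, or can be verified directly via the criticality framework. Given that $v$ is geodesic and $\f{v}, \l{v}$ both have name $a$, I would then argue that $v$ must in fact \emph{be} critical: a geodesic $\{a,b\}$-word whose first and last letters have the same name $a$, yet which is $\tau$-equivalent to a word $b^i a^j b^k$ whose first and last letters have name $b$, cannot be fixed by the $\tau$-equivalence, and by Lemma~\ref{lem:2gen_taufacts} the only way the first and last names can change under a single $\tau$-move is for that word to be critical.

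For the first claim, the cleanest route is: if $v$ were not critical, then by Definition~\ref{gen:2gen_critical} we would have $p(v)+n(v) < m_{ab}$ (noting $p(v)+n(v)\le m_{ab}$ always, and criticality forces equality together with the structural form). I would show that a word with $p(v)+n(v)<m_{ab}$ is ``sub-critical'' and that $\tau$-moves are only defined on critical words, so the hypothesized sequence of $\tau$-moves taking $v$ to $b^i a^j b^k$ would be impossible unless $v$ itself, and every intermediate word, is critical. The key observation is that $b^i a^j b^k$ has $p + n = m_{ab}$ forced by the invariance of $p(\cdot)+n(\cdot)$ under $\tau$-moves (since $\tau$ swaps the roles of the positive and negative alternating lengths in a length-preserving way); hence $p(v)+n(v)=m_{ab}$, giving condition (i), and the structural form (ii) of Definition~\ref{gen:2gen_critical} then follows from the requirement that $\f{v},\l{v}$ have name $a$ combined with the geodesic normal-form structure.

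For the ``Furthermore'' clause, I would treat the positive and negative cases together by symmetry (replacing each generator by its inverse). Suppose $v$ is positive. Then $b^i a^j b^k$ is also positive (as $\tau$-moves and the invariants preserve signedness here), so $i,j,k>0$, and criticality condition (ii)(a) says $v$ has the form ${}_{m_{ab}}(a,b)\xi$ or $\xi(a,b)_{m_{ab}}$ with the appropriate $p(\xi)<m_{ab}$. Since $\f{v}$ and $\l{v}$ both have name $a$, and $v$ is positive with $p(v)=m_{ab}$, $n(v)=0$, I would use the explicit positive $\tau$-move formulas to track what $\tau(v)$ must be, and match it against the target shape $b^i a^j b^k$. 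The constraint that the target has a single central $a$-block flanked by $b$-blocks, combined with $v$ having name $a$ at both ends, pins down that the $a$-exponent is $|j|=1$ and that $v$ is one of the two listed forms $a^k b^j a^j$ or $a^j b^j a^i$, according to whether the maximal positive alternating block sits at the front or the back of $v$.

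The main obstacle I anticipate is the ``Furthermore'' case analysis: carefully reading off, from the positive $\tau$-move definitions, exactly which geodesic $v$ maps to a word of the restricted form $b^i a^j b^k$, and verifying that this forces $|j|=1$ rather than merely bounding it. The subtlety is that a general positive critical word can $\tau$-map to many positive words, so I must use the hypothesis that the \emph{target} is specifically of the three-block shape $b^i a^j b^k$ to constrain $v$; I expect this amounts to checking that the alternating structure of a positive critical word, once it begins and ends with the name $a$, collapses to one of exactly the two claimed normal forms, with the single $a$ in the middle forcing $|j|=1$.
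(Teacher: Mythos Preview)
Your plan conflates $\tau$-moves on a word with $\tau$-moves on its subwords. A $\tau$-move replaces a critical \emph{subword} $u$ of $v$ by $\tau(u)$; it does not require $v$ itself to be critical. So the assertion that ``the hypothesised sequence of $\tau$-moves taking $v$ to $b^{i}a^{j}b^{k}$ would be impossible unless $v$ itself, and every intermediate word, is critical'' is simply false as written. Your variant via Lemma~\ref{lem:2gen_taufacts} has the same defect: a $\tau$-move that changes the first letter of the current word only shows that \emph{that} word has a critical prefix, not that the original $v$ is critical. What survives from your outline is the (correct) observation that $p$ and $n$ are preserved along the chain, so $p(v)+n(v)=3$; this is condition~(i) of criticality, but it is not the whole of criticality.

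The real content you are missing is the verification of the structural condition~(ii), particularly in the unsigned case. Knowing only that $p(v)+n(v)=3$ and that $\f{v},\l{v}$ have name $a$ does \emph{not} force $v$ to be critical; for instance nothing so far rules out $\f{v}=\l{v}=a$ with $p(v)=2$, $n(v)=1$, which would violate form~(c). The paper handles this by exploiting the equality $v =_G b^{i}a^{j}b^{k}$: left-multiplying by $b^{-1}$ (or right-multiplying by $b$) produces a non-geodesic word, and then the Mairesse--Math\'eus criterion pins down the sign of $\f{v}$ and of $\l{v}$, yielding form~(c) directly. You have no analogue of this step. Similarly, for the ``Furthermore'' clause in the positive case, the paper first deduces $j=1$ from $p(b^{i}a^{j}b^{k})=3$ and then \emph{enumerates} every positive word in the $aba\leftrightarrow bab$ orbit of $b^{i}ab^{k}$, checking which begin and end with $a$; your plan to ``track $\tau(v)$ and match shapes'' does not supply this classification and in particular gives no mechanism for forcing $|i|=1$ or $|k|=1$.
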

\begin{proof} By \cite[Proposition 4.3]{MairesseMatheus}, a word $w$ in the
$2$-generator Artin group $\langle a,b \mid aba=bab \rangle$ is geodesic if and
only if $p(w) + n(w) \le 3$, and there is a second geodesic word
defining the same group element if and only if $p(w)+n(w)=3$.
Since the word $w:= b^{i}a^{j}b^{k}$ satisfies $p(w)+n(w) \le 3$ for all values of
$i,j,k$, this word is geodesic and hence so is $v$, and since $v \ne w$ we must
have $p(v)+n(v) = 3$ with $p(v) = p(b^{i}a^{j}b^{k})$ and
$n(v) = n(b^{i}a^{j}b^{k})$.

Suppose first that $v$ is a positive word, so $p(v)=3$ and $n(v)=0$. The
case when $v$ is negative is similar. So $\f{v}=\l{v}=a$.
Since $p(b^{i}a^{j}b^{k}) = 3$, we must have $j=1$ and $i,k \ge 1$.
We claim that the only positive words that are equal
in $G_{ab}$ to $b^{i}a^{j}b^{k}=b^i a b^k$ are the words of the form
$b^{i'}aba^{i-i'}b^{k-1}$
for $0 \le i' \le i$ and $b^{i-1} a^{k-k'}bab^{k'}$ for $0 \le k' \le k$.
This claim can be verified by checking that any substitution of $aba$ by $bab$
or vice versa results in the replacement of one word of this form by another.
It follows from the claim, together with $\f{v} = \l{v}=a$,
that $k=1$ or $i=1$
with $v = aba^i$ or $v = a^k ba$, both of which are critical words,

Otherwise $v$ is unsigned, so either $p(v)=2$ and $n(v)=1$, or $p(v)=1$ and
$n(v)=2$. Suppose that $p(v)=2$ and $n(v)=1$; the other case is similar.
Since the same applies to the word $b^{i}a^{j}b^{k}$ we must have either
$i,j>0$ and $k<0$, or $i<0$ and $j,k>0$.
Assume the former---again the other
case is similar. Then the word $b^{-1}v$ is not geodesic in $G_{ab}$ so,
using the results of~\cite{MairesseMatheus} again, we must have $p(b^{-1}v) =
n(b^{-1}v) = 2$, so $\f{v}=a^{-1}$. Similarly, since the word $vb$ is
non-geodesic, we must have $p(vb)=3$ and $n(vb)=1$, so $v$ has
the suffix $ba$, and hence $v$ is critical as claimed.
\end{proof}

\begin{lemma}\label{lem:abcrit2}
Let $v$ be an $\{a,b\}$-word such that $\f{v}$ and $\l{v}$ both have name $a$.
Then we can decide in linear time whether $v$ can be transformed using a
sequence of $\tau$-moves to a word $b^ia^jb^k$ with $i,j,k$ nonzero integers
and, if so, compute the resulting transformed word.
\end{lemma}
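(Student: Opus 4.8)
The plan is to reduce the decision problem to the structural characterization already available from the $2$-generator theory. By Lemma~\ref{lem:abcrit1}, if $v$ can be transformed to some $b^ia^jb^k$ with nonzero $i,j,k$, then $v$ must be critical; so the first step is to test whether $v$ is critical, which by Lemma~\ref{lem:critlintest} takes linear time. If $v$ is not critical, we answer \emph{no} immediately. Thus the substantive work only arises when $v$ is critical, and I would proceed by cases according to whether $v$ is positive, negative, or unsigned.

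In the positive and negative cases, the ``Furthermore'' clause of Lemma~\ref{lem:abcrit1} does essentially all the work: it tells us that the only possibilities are $v=a^kb^ja^j$ with $|i|=1$ or $v=a^jb^ja^i$ with $|k|=1$, and in each of these forms $|j|=1$. So here I would simply read off $p(v)$ and $n(v)$ (computable in linear time) to confirm criticality, then inspect the explicit shape of $v$ to recognize which of the two patterns it matches, and compute the target $b^ia^jb^k$ directly via the appropriate $\tau$-move. The transformed word is then obtained in linear time, since a single $\tau$-move rewrites $v$ in one scan.

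The unsigned case is where the real content lies, since a critical unsigned $\{a,b\}$-word has the form ${}_{p(v)}(x,y)\,\xi\,(z^{-1},t^{-1})_{n(v)}$ or the negative-leading analogue, and the constraint that both $\f{v}$ and $\l{v}$ have name $a$ pins down the leading and trailing alternating blocks. My approach would be to apply the defining $\tau$-move to the unsigned critical word $v$, obtaining a specific word $\tau(v)$ whose structure is determined by the formulas in the definition of $2$-generator $\tau$-moves, and then check whether $\tau(v)$ (possibly after further $\tau$-moves) has the shape $b^ia^jb^k$ with all exponents nonzero. Since Lemma~\ref{lem:2gengeo} guarantees that any two geodesic representatives are connected by $\tau$-moves, and since every word in the target family $b^ia^jb^k$ is geodesic, the question reduces to checking whether $v$ and some fixed-shape word represent the same element of $\langle a,b\rangle$; using the normal form results of~\cite{MairesseMatheus} invoked in Lemma~\ref{lem:abcrit1}, this equality is decidable in linear time, and when it holds the target exponents are determined by $p(v)$, $n(v)$, and the placement of the middle $a$-block.

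The main obstacle I anticipate is bounding the number of $\tau$-moves needed to reach the canonical form $b^ia^jb^k$, so as to stay within linear time: a priori a chain of $\tau$-moves could be long. The way around this is to avoid iterating $\tau$-moves blindly and instead exploit the rigidity from Lemma~\ref{lem:abcrit1}, which already lists the \emph{only} possible target words, together with the observation that $p(v)$, $n(v)$, and the location of the unique maximal $a$-block are $\tau$-invariant (as follows from Lemma~\ref{lem:2gen_taufacts} and the fact that $\tau$ preserves the element of $\langle a,b\rangle$). Hence rather than searching, I would compute these invariants in one linear scan, read off the unique candidate word $b^ia^jb^k$ they determine, and verify in linear time via~\cite{MairesseMatheus} that it is indeed equal to $v$ in $G$, outputting it if so and reporting \emph{no} otherwise.
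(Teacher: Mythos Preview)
Your treatment of the signed cases is fine and matches the paper: once Lemma~\ref{lem:abcrit1} pins $v$ down to one of two explicit shapes, a single scan settles everything.

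The gap is in the unsigned case. You correctly identify the obstacle---a naive chain of $\tau$-moves could cost quadratic time---but your proposed workaround does not close it. You assert that ``$p(v)$, $n(v)$, and the location of the unique maximal $a$-block'' are $\tau$-invariants from which the triple $(i,j,k)$ can be read off. The first two are invariants, and together with the abelianisation they determine $j$ and $i+k$; but nothing you have written determines $i$ and $k$ separately. There is no ``unique maximal $a$-block'' in a general critical $\{a,b\}$-word, and its ``location'' is not a well-defined quantity preserved by $\tau$-moves, so this step is not an argument. Your fallback---verify the candidate equality $v=_G b^ia^jb^k$ ``in linear time via~\cite{MairesseMatheus}''---is also not justified: Mairesse--Math\'eus characterises geodesics but does not hand you a linear-time equality test, and even if you compute a shortlex normal form, the target $b^ia^jb^k$ is typically not shortlex-minimal (e.g.\ $bab$ versus $aba$), so recognising it is a separate problem.

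The paper takes a genuinely different route here: rather than trying to predict $(i,j,k)$ from invariants, it gives an explicit two-pointer procedure that reads $v$ simultaneously from both ends, maintaining a boolean \emph{flip} flag whose toggling encodes the effect of a $\tau$-move on the unread middle portion without actually rewriting it. Each $\tau$-move then costs amortised constant time, so the whole cascade of moves---however many there are---runs in linear time, and either produces the word $b^ia^jb^k$ directly or certifies that no such form exists. That algorithmic idea is the real content of the lemma in the unsigned case, and it is missing from your proposal.
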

\begin{proof}
We can calculate $p(v)$ and $n(v)$ in linear time and if $p(v) + n(v) \ne 3$
then no such transformation is possible, and we return \false.
So we can assume that $p(v)+n(v)=3$ and hence that $v$ is a geodesic word.
If $v$ is a positive word then, by Lemma~\ref{lem:abcrit1}, the
transformation in question exists if and only if either $j=k=1$ and $v=a^kba$,
or $i=j=1$ and $v = aba^i$, which we can test directly in linear time.
The case when $v$ is a negative word is similar.

So suppose that $v$ is unsigned. Then $v$ is critical by
Lemma~\ref{lem:abcrit1}. We describe a linear-time process that will transform
$v$ to a word of the form $b^ia^jb^k$ and return \true\ if possible, and
return $\false$ otherwise. We shall not discuss how this process could be
implemented in linear time, except to note that such an implementation might
involve storing the word $v$ and the transformed word as doubly linked lists.

We read the word $v$ from both ends simultaneously, i.e. leftwards from
the beginning of $v$ and rightwards from its end, and we shall denote the
central part of $v$ that has not yet been read by $v'$. We shall maintain
a variable \flip\ that is \true\ or \false, and initially \false. When \flip\ 
is true the letters $a$ and $a^{-1}$ of $v'$ are regarded as being
replaced $b$ and $b^{-1}$ respectively, and vice versa. We shall call the
letters of the word $v'$ after making this replacement if necessary the
\emph{interpreted} letters of $v'$ (which are the same as the letters of $v'$
itself when \flip\ is \false).

At the beginning of the process we are reading the first and last letters
of $v$ which, since $v$ is critical, must be $a$ and $a^{-1}$, not
necessarily respectively. Suppose, for example, that $v$ has the prefix
$ab$ and the suffix $a^{-1}$; all other cases are similar. Then we replace
the prefix $ab$ by $b^{-1}$ and the suffix $a^{-1}$ by $ab$. We then set
the flag \flip\ to \true. Note that, if we were to replace the letters of
the unread part $v'$ of $v$ by their interpreted letters at this stage,
then we would have executed the $\tau$-move on $v$, but we do not do that
explicitly.

As we continue to read $v'$ from both ends, if an interpreted letter has name
$b$, then we replace the original letter (if necessary) by the interpreted
letter, and we do this until the interpreted letters at both ends of $v'$ have
name $a$. If this never happens (i.e. if $v'$ becomes empty without this
occurring) then we return \false. Otherwise, if the two interpreted letters
are equal, then we check whether the remainder of the interpreted word
$v'$ is just a power of $a$, in which case, after replacing $v'$ by its
interpreted word if necessary, the resulting transformed word has the
required from, so we return \true. If it is not a power of $a$ then we return
\false.

On the other hand, if the interpreted letters at the beginning and end of $v'$
are $a$ and $a^{-1}$ then, by Lemma~\ref{lem:abcrit1}, if the required
transformation is possible, then the interpreted word $v'$ must be critical,
which we can test for by looking at the second letters from the beginning
and end of $v'$.  If it is not critical then we return \false. Otherwise we
restart the process at this point, by making the necessary changes to the
beginning and end of $v'$, and then changing the value of \flip.
\end{proof}

\subsection{Critical words and $\tau$-moves involving more than 2 generators}
\label{sec:3gen}
In addition to the 2-generator critical words defined in Section ~\ref{sec:2gen},
we shall need two kinds of critical words involving 3 generators, both words that are \emph{pseudo 2-generated} as defined in \cite{BCMW} and some further
words that we shall call $\{a,b,c\}$-critical, and define below.
In the article \cite{BCMW} a word that is \emph{pseudo 2-generated} can 
involve any number of generators in addition to its selected pair of 
non-commuting \emph{pseudo generators}; but 
in this article a pseudo 2-generated (P2G) word $u$ must have  
pseudo-generators $\{x,b\}$, where either $x=a$ or $x=c$ and at most one further generator $z$, commuting with $x$, and so equal to $c$ or $a$.
We call such words \emph{P2G words of type $\{x,b\}$}
and define them now.
Since there is only one further generator within $G$, we can simplify some
details of the definition of P2G words from \cite{BCMW}.

\begin{definition}[P2G words of types $\{a,b\}$ and $\{b,c\}$]
	\label{def:P2G}
	Where $x=a$ or $c$, and $z=c$ or $a$ (respectively),
a P2G word of type $\{x,b\}$ in $G$ is defined to have the form
$u = u_pu_qu_s$, where
\begin{mylist}
\item[(i)] 
	either $\f{u}=\f{u_p} \in \{x,x^{-1}\}$ and
	$u_p$  is an $\{a,c\}$-word,
	or $\f{u}=\f{u_p} \in \{b,b^{-1}\}$ and
	$u_p$ is a power of $b$;
\item[(ii)] $u_q$ is an $\{x,b \}$-word, 
starting with a letter whose name is not that of $\f{u}$, 
	and ending with a letter whose name is not that of $\l{u}$;
\item[(iii)] 
	either $\l{u}=\l{u_s} \in \{x,x^{-1}\}$ and 
	$u_s$ is an $\{a,c\}$-word or
$\l{u} = \l{u_s} \in \{b,b^{-1} \}$, and
	$u_s$ is a power of $b$ 
\end{mylist}
For such a word we define $\alpha(u):= z^j$, where $j$ is the total power
of $z$ in $u_p$ and $\beta(u) := z^k$, where $k$ is the total power of $z$ 
in $u_r$. 
\end{definition}

For a P2G word $u$, we shall denote the word obtained by deleting all
letters from $u$ other than its two pseudo-generators and their inverses by
$\widehat{u}$. (If $u$ involves only its two pseudo-generators, then $u=\widehat{u}$).

\begin{definition}[P2G, criticality and $\tau$-moves]
	\label{def:P2G_critical_tau}
A P2G word $u$ is said to be \emph{critical} if $\widehat{u}$ is a critical
$2$-generator word over the pseudo-generators 
	$\{x,b\}$.
In that case, still following \cite{BCMW}, we extend the definitions of
$\tau$-moves defined in \cite{HR12} and \cite{HR13} to apply to $u$, and define
\[\tau(u) := \alpha(u)\tau(\widehat{u})\beta(u),\] 
where $\alpha(u),\beta(u)$ and the 2-generator critical word $\widehat{u}$ are
as defined above, and $\tau(\widehat{u})$ is as defined in
Section~\ref{sec:2gen}.

We shall refer to the replacement by $\tau(u)$ of a critical P2G word $u$ of
one of the two types that we have just defined as a \emph{$\tau$-move},
\emph{of type $(a,b)$} or \emph{of type $(b,c)$}.
\end{definition}

\begin{example}
	\label{eg:P2G_critical_tau}
The words $ab^3a^{-1}$ and $bc^2bcb^{-1}c^{-1}$ are 2-generator critical words
with $\tau(ab^3a^{-1})=b^{-1}a^3b$ and
$\tau(bc^2bcb^{-1}c^{-1})=c^{-1}b^{-1}cbc^2b$.

The word $u_1:= acb^3c^{-1}a^{-1}$ is P2G-critical of type $\{a,b\}$,
with $\widehat{u_1} = ab^3 a^{-1}$, $\alpha{u_1}=c$,
$\beta(u_1)=c^{-1}$,$\tau(u_1) = cb^{-1}a^3bc^{-1}$.

The word $u_2:= bc^2bcb^{-1}a^2c^{-1}$ is P2G-critical of type $\{b,c\}$,
with $\widehat{u_2} = bc^2bcb^{-1}c^{-1}$, $\alpha{u_2}=\emptyword$,
$\beta(u_2)=a^2$, $\tau(u_2) = c^{-1}b^{-1}cbc^2ba^2$.
\end{example}

We note that any $\tau$-move for a critical P2G word decomposes as a 
sequence of $\tau$-moves and commutator moves for 2-generator subwords, but in general that sequence of 2-generator moves does not move rightward within the word.

On the subset of P2G words that are actually 2-generated, the definitions of
critical words and of $\tau$-moves coincide with those already given for
2-generator words.

Note that our definitions of $\alpha(u)$ and $\beta(u)$ follow those of
\cite{BCMW}, but in that article, where the groups may have more than 3 generators, the words $\alpha(u)$ and $\beta(u)$ may involve more than one generator.
Since in our group $G$ there are no generators that commute with two other
generators, the subword denoted by $\rho(u)$ in \cite{BCMW} is empty.

We observe the following about P2G-critical words in $G$, and  their associated
$\tau$-moves.
\begin{lemma}
	\label{lem:P2G_taufacts}
Let $u$ be a P2G-critical word $u$, with pseudo-generators $\{x,b\}$,
where $x=a$ or $c$.
Then
\begin{mylist}
\item[(i)] the first and last letters of $u$ are in $\{x,x^{-1},b,b^{-1}\}$,
	and equal to the first and last letters of $\widehat{u}$,
\item[(ii)] the names of the first letters of $u$ and $\tau(u)$ are distinct,
as are the names of the last letters of $u$ and $\tau(u)$,
\item[(iii)] $\f{\tau(u)} \in \{x,b\} \iff \alpha(u) = \emptyword$,
\item[(iv)] $\l{\tau(u)} \in \{x,b\} \iff \beta(u) = \emptyword$,
\end{mylist}
\end{lemma}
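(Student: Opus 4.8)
The plan is to unwind Definition~\ref{def:P2G} together with the factorisation $\tau(u)=\alpha(u)\,\tau(\widehat{u})\,\beta(u)$ of Definition~\ref{def:P2G_critical_tau}, reducing each of the four assertions to a statement about the $2$-generator word $\widehat{u}$ and its image under $\tau$, for which Lemma~\ref{lem:2gen_taufacts} is available. Throughout write $z$ for the unique generator of $G$ lying in $\{a,c\}\setminus\{x\}$. Since $G$ has only three generators, passing from $u$ to $\widehat{u}$ deletes exactly the letters of name $z$; moreover $\alpha(u)$ and $\beta(u)$ are powers of $z$, and $\tau(\widehat{u})$ is again a (length-preserving, hence nonempty) word over $\{x,b\}$.

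For (i) I would argue straight from clause (i) of Definition~\ref{def:P2G}: in either of its two cases $\f{u}=\f{u_p}$ has name $x$ or $b$, so $\f{u}\in\{x,x^{-1},b,b^{-1}\}$; as $\f{u}$ is not a letter of name $z$, it survives the deletion and remains leftmost, giving $\f{\widehat{u}}=\f{u}$. The claim for $\l{u}$ is symmetric, using clause (iii).

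Parts (iii) and (iv) are then immediate from the factorisation, reading ``$\in\{x,b\}$'' as ``has name in $\{x,b\}$''. If $\alpha(u)=\emptyword$ then $\f{\tau(u)}=\f{\tau(\widehat{u})}$, which has name $x$ or $b$ because $\tau$ sends a critical word over $\{x,b\}$ to a nonempty word over $\{x,b\}$; if $\alpha(u)\neq\emptyword$ then $\f{\tau(u)}=\f{\alpha(u)}$ is a nontrivial power of $z$, of name $z\notin\{x,b\}$. This establishes the equivalence in (iii), and (iv) is the same argument with $\beta(u)$ and last letters in place of $\alpha(u)$ and first letters.

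For (ii) I would split on whether $\alpha(u)$ is empty. If $\alpha(u)=\emptyword$ then $\f{\tau(u)}=\f{\tau(\widehat{u})}$, and Lemma~\ref{lem:2gen_taufacts} gives that the names of $\f{\widehat{u}}$ and $\f{\tau(\widehat{u})}$ differ, whence by (i) the names of $\f{u}$ and $\f{\tau(u)}$ differ. If $\alpha(u)\neq\emptyword$ then $\f{\tau(u)}$ has name $z$; the point is that a nonempty $\alpha(u)$ forces $u_p$ to contain a letter of name $z$, so $u_p$ is not a power of $b$, and clause (i) of Definition~\ref{def:P2G} then places us in the case $\f{u}\in\{x,x^{-1}\}$, so $\f{u}$ has name $x\neq z$. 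Either way the names of $\f{u}$ and $\f{\tau(u)}$ differ, and the statement for last letters follows symmetrically from $\beta(u)$ and clause (iii). I expect the only delicate step to be this last piece of bookkeeping---extracting from Definition~\ref{def:P2G} that a nonempty $\alpha(u)$ (respectively $\beta(u)$) pins the name of $\f{u}$ (respectively $\l{u}$) down to $x$; everything else is direct substitution into the definitions together with an appeal to Lemma~\ref{lem:2gen_taufacts}.
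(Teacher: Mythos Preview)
Your proposal is correct and follows exactly the approach the paper indicates: the paper's proof is simply ``This can be observed from the definition together with the properties observed in Lemma~\ref{lem:2gen_taufacts}'', and you have written out precisely those observations in full, including the bookkeeping that a nonempty $\alpha(u)$ forces $\f{u}$ to have name $x$. Your parenthetical reading of ``$\f{\tau(u)}\in\{x,b\}$'' as ``has name in $\{x,b\}$'' is also the intended one.
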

\begin{proof}
This can be observed from the definition together with the properties observed
in Lemma~\ref{lem:2gen_taufacts}.
\end{proof}

Note that the powers $\alpha(u)$ and $\beta(u)$ of the third generator
introduce a lack of symmetry into the definition of a critical P2G word $u$, 
and if either of those two words is non-empty then $\tau(u)$ is not itself
critical.

The example $w=(b,c)_{n-1}\cdot aba \cdot {}_{n-2}(c,b)x^{-1}$
considered in Example~\ref{eg:tricky_w_in_G} for our selected group $G$ still does not admit a
\emph{rightward reducing sequence} (RRS) as defined in \cite{BCMW}.
To remedy this,
we shall extend our definition of a critical word that is 2-generated over
$\{b,c\}$ to allow not only P2G words but also some further 3-generator words
over $S$ to be considered critical, each one constructed out of two P2G words,
one of type $\{a,b\}$ and the other of type $\{b,c\}$, as explained below. Then we define
$\tau$-moves on these new critical words, with the
intention that, by using these new $\tau$-moves together with the ones
from \cite{BCMW} defined above, we can find rightward 
reducing sequences to reduce any non-geodesic word over $S$.

\begin{definition}[3-generator critical word]
	\label{def:3gen_critical}
We call a 3-generator word over $S$ critical if it has the form
$u := u_p u_q u_r$,
where
\begin{mylist}
\item[(i)]
either $\f{u}=\f{u_p} \in \{b,b^{-1}\}$ and
$u_p$ is a power of $b$; or
or $\f{u}=\f{u_p} \in \{c,c^{-1}\}$ and
$u_p$  is an $\{a,c\}$-word;
\item[(ii)] $u_q$ is a $\{b,c\}$-word,
starting with a letter whose name is different from that of $\f{u}$;
\item[(iii)] $u_r$ is a P2G word of type $\{a,b\}$ with
$\f{u_r},\l{u_r} \in \{a,a^{-1}\}$, for which the $\{a,b\}$-word
$\widehat{u_r}$ can be transformed, using a sequence of $\tau$-moves, into a
word of the form $b^{\ii} a^{\jj} b^{\kk}$ for some nonzero
$\ii,\jj,\kk \in \Z$.
\item[(iv)] $u^\# := u_p u_q \alpha(u_r) b^{\ii}$ is a P2G-critical word of
type $\{b,c\}$.
\end{mylist}
We shall call such words \emph{critical of type $\{a,b,c\}$}.
Note that the word $u_r$ in (iii) must be critical by Lemma~\ref{lem:abcrit1}.
\end{definition}

Note that $u^\#$ is a prefix of a word that can be derived from $u$ by a 
combination of commutation rules between $a$ and $c$ and $\tau$-moves on the 
2-generated word $\widehat{u_r}$.

Note also that, by Lemma~\ref{lem:2gengeo} above, the condition that
$\widehat{u_r}$ can be converted to
$b^{\ii}a^{\jj}b^{\kk}$ using a succession of $\tau$-moves on $\{ a,b \}$-words is equivalent to it being geodesic in the 2-generated Artin group.

Applying first commutation rules between $a$ and $c$, then the relation
$\widehat{u_r} =_G b^{\ii}a^{\jj}b^{\kk}$, and then more commutation rules
between $a$ and $c$, we see that
\begin{eqnarray*}
u &=& u_p u_q u_r =_G u_pu_q \alpha(u_r) \widehat{u_r} \beta(u_r)\\
&=_G& u_p u_q \alpha(u_r) b^{\ii} a^{\jj} b^{\kk} \beta(u_r) 
= u^\# a^{\jj} b^{\kk} \beta(u_r) \\
&=_G& \alpha(u^\#) \widehat{u^\#} a^{\jj} b^{\kk} \beta(u_r) \end{eqnarray*}

We have $\tau(\widehat{u^\#})$  equal to $\p{\tau(\widehat{u^\#})}c^\ee$,
where $\ee = \pm 1$.
So now,  applying $\tau$ to $\widehat{u^\#}$,
followed by more commutation rules, we derive 
\begin{eqnarray*}
u &=_G& \alpha(u^\#) \tau(\widehat{u^\#}) a^{\jj} b^{\kk} \beta(u_r) 
= \alpha(u^\#)  \p{\tau(\widehat{u^\#})} c^{\ee} a^{\jj} b^{\kk} \beta(u_r)\\
&=_G& \alpha(u^\#)  \p{\tau(\widehat{u^\#})} a^{\jj} c^{\ee} b^{\kk} \beta(u_r).
\end{eqnarray*}

For a 3-generator critical word $u$, we define $\alpha(u)$ and $\beta(u)$ to be
the powers $\alpha(u^\#)$ and $\beta(u_r)$ of $a$ and $c$ respectively 
associated with the P2G words $u^\#$ and $u_r$ that are identified above.  

\begin{definition}[$\tau$-moves of type $\{a,b,c\}$]
	\label{def:3gen_tau}
In the notation defined above, for the 3-generator critical word
$u = u_p u_q w_r$ of type $\{a, b, c\}$, we define
\[ \tau(u) := \alpha(u)  \p{\tau(\widehat{u^\#})} a^{\jj} c^{\ee} b^{\kk}
\beta(u).\] 
We call these new moves \emph{$\tau$-moves of type $(a,b,c)$}.
\end{definition}
Note that in order to verify this identity we have applied $\tau$-moves on
2-generated subwords moving from right to left within the word,
rather than from left to right.  
This is why
we need to collect this set of moves together
into a single $\tau$-move on the 3-generator critical word.

\begin{example}
	\label{eg:3gen_critical_tau}
The word $u:= (b,c)_{n-1}\cdot aba$ of $w$ from Example~\ref{eg:tricky_w_in_G}
is 3-generator critical,
with ${u}_p$ a single generator, 
\begin{eqnarray*}
	&&{u}_q = (b,c)_{n-2},\,{u}_r = aba=\widehat{{u}_r}=_G bab.\\
\hbox{\rm Then}&& u^\#=(b,c)_{n-1}b,\end{eqnarray*}
$\alpha(u)$ and $\beta(u)$ are both empty and the integers $\ii,\jj,\kk$ of
Definition~\ref{def:3gen_critical}(iii) are all equal to 1.

Further $\tau(u^\#)=(b,c)_n=(c,b)_{n-1}c$, So $\ee=1$.
It follows that $\tau(u) = (c,b)_{n-1}acb$.
\end{example}

	
As in \cite{HR12,HR13,BCMW},
we shall reduce words in our group $G$ using particular sequences of 
overlapping $\tau$-moves which (again as in \cite{HR12,HR13,BCMW}) we shall
call \emph{rightward reducing sequences}.

\section{Rightward reducing sequences of $\tau$-moves}
\label{sec:RRS}

In order to reduce a word $w$ using a sequence of $\tau$-moves (including the
new moves defined in Section~\ref{sec:critical_tau}),
we shall write $w$ as a concatenation of subwords, to which we associate
a sequence of words in $A^*$ that we shall call a
\emph{rightward reducing sequence}, abbreviated as RRS.
Our definition of an RRS extends the definition of \cite{BCMW},
which was already based on a definition within \cite{HR12,HR13}.
We use notation similar to that of \cite{BCMW} to facilitate comparison.

An RRS will be a sequence $U=u_1,\ldots,u_m,u_{m+1}$ of words in $A^*$,
with $u_i$ critical for $i \leq m$, 
that is associated with a factorisation of 
a word $w$ that is to be reduced, as we now describe.

\begin{definition}[Rightward reducing sequence (RRS)]
	\label{def:RRS}
Let
$U=u_1,\ldots,u_m,u_{m+1}$ be a sequence of words over $A$,
for which each $u_i$ with $1 \le i \le m$ 
is either a P2G-critical word of type $\{a,b\}$ or $\{b,c\}$ 
or a critical word of type $\{a,b,c\}$,
and $u_{m+1} = xv$, where the letter $x$ commutes with every letter
in the word $v$. 
For $1 \le i \le m$,  write $\alpha_i$ and
$\beta_i$ for $\alpha(u_i)$ and $\beta(u_i)$ respectively.

For a word $w$ over $A$, we say that \emph{$w$ admits $U$ as an RRS of
length $m$} if $w$ can be written as a concatenation of words
$w=\mu w_1 \cdots w_mw_{m+1}\gamma$, with $w_i$ 
		non-empty for $1 \le i \le m$, $u_1=w_1$, 
$\f{u_{m+1}} = \f{\gamma}^{-1}$ and, for each $i$ with $2 \le i \le m+1$,
	$u_i$ is determined (in terms of $u_{i-1}$, $w_i$) according to case (i) or (ii) below.
\begin{mylist}
\item[(i)] if $u_{i-1}$ is a P2G-critical word,  
	then $u_i = \l{\tau(\widehat{u_{i-1}})} \beta_{i-1} w_i$;
\item[(ii)] if $u_{i-1}$ is a critical word of type $\{a,b,c\}$, then
 $u_i = c^{\ee} b^{\kk}  \beta_{i-1} w_i$,
 using the notation of Definitions~\ref{def:3gen_critical}
 and~\ref{def:3gen_tau}.
\end{mylist}
Note that in (i), the notation $\widehat{u_{i-1}}$ is used, just as
$\widehat{u}$ was used earlier, to denote the 2-generator word over the
2-pseudo generators formed by deleting all occurrences of the third generator. 

We call the factorisation $\mu w_1 \cdots w_mw_{m+1}\gamma$ of $w$
the \emph{decomposition associated with $U$}.
\end{definition}

Note that the word $w_{m+1}$ can be empty if $m>0$. If $m=0$ then
$w_{m+1}=w_1=\f{\gamma}^{-1}v$.  So a word that is not freely reduced
admits an RRS of length 0, with $v=\emptyword$.

\begin{lemma}
\label{lem:RRSdetails}
Let $U=u_1,\ldots,u_m,u_{m+1}$ with $m>0$ be an RRS for a word $w$ of $G$
with associated decomposition $w=\mu w_1 \cdots w_mw_{m+1}\gamma$,
and let $\alpha_i:= \alpha(u_i),\beta_i:= \beta(u_i)$, for each $i$. Then
\begin{mylist}
\item[(i)] $u_m$ cannot have type $\{a,b,c\}$;
\item[(ii)] $\beta_m= \emptyword$;\quad
(iii) for $i<m$,
$\beta_i \ne \emptyword \Rightarrow  \alpha_{i+1} = \emptyword$.
\item[(iv)] if $u_i$ is critical of type $\{a,b,c\}$ for some $i<m$,
then $\alpha_{i+1} = \emptyword$, and
$u_{i+1}$ must be either a P2G-critical word of
type $\{b,c\}$ or a critical word of type $\{a,b,c\}$. 
\end{mylist}
\end{lemma}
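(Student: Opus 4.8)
The plan is to read off all four statements from the explicit form that Definition~\ref{def:RRS} forces on $u_{i+1}$, using two ingredients: the commuting structure of $G$ (in which $a$ and $c$ commute but $b$ commutes with neither), and the following \emph{name fact} extracted from the definition of a P2G word. If $u$ is P2G-critical of type $\{x,b\}$ and $\beta(u)\neq\emptyword$, then $\l{u}$ must have name $x$: by Definition~\ref{def:P2G}(iii), if $\l{u}$ had name $b$ then the suffix factor would be a power of $b$, containing no occurrence of the third generator $z$, forcing $\beta(u)=\emptyword$. Combining this with Lemma~\ref{lem:2gen_taufacts} applied to the $2$-generator word $\widehat{u}$, we get that $\beta(u)\neq\emptyword$ implies $\l{\tau(\widehat{u})}$ has name $b$. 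This is the one genuinely load-bearing observation; everything else is bookkeeping on first letters.

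For (i), I would argue by contradiction. If $u_m$ had type $\{a,b,c\}$, then case (ii) of Definition~\ref{def:RRS} gives $u_{m+1}=c^{\ee}b^{\kk}\beta_m w_{m+1}$, so $\f{u_{m+1}}$ has name $c$ and is immediately followed by the nonempty power $b^{\kk}$ (recall $\kk\neq0$ by Definition~\ref{def:3gen_critical}(iii)). But $u_{m+1}=xv$ with $x=\f{u_{m+1}}$ commuting with every letter of $v$, and $c$ does not commute with $b$ in $G$; contradiction. Statement (ii) then follows from (i): since $u_m$ is P2G-critical, case (i) of Definition~\ref{def:RRS} gives $u_{m+1}=\l{\tau(\widehat{u_m})}\beta_m w_{m+1}=xv$ with $x=\l{\tau(\widehat{u_m})}$. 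If $\beta_m\neq\emptyword$, the name fact forces $x$ to have name $b$, while $v$ begins with $\beta_m$, a nonempty power of a generator in $\{a,c\}$; as $b$ commutes with neither $a$ nor $c$, this contradicts the commuting hypothesis on $u_{m+1}$. Hence $\beta_m=\emptyword$.

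For (iii) and (iv) I would split on the type of $u_i$ (for $i<m$) and compute $\f{u_{i+1}}$ together with its immediately following letter. If $u_i$ has type $\{a,b,c\}$, then case (ii) gives $u_{i+1}=c^{\ee}b^{\kk}\beta_i w_{i+1}$, so $\f{u_{i+1}}$ has name $c$ and the next letter lies in $b^{\kk}$ (name $b$); consequently the maximal $\{a,c\}$-prefix of $u_{i+1}$ is just the single letter $c^{\ee}$, which contains no $a$, so $\alpha_{i+1}=\emptyword$ whatever the type of $u_{i+1}$. Moreover a first letter of name $c$ rules out P2G type $\{a,b\}$, leaving only P2G type $\{b,c\}$ or type $\{a,b,c\}$; this yields (iv) together with the type-$\{a,b,c\}$ half of (iii). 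If instead $u_i$ is P2G-critical with $\beta_i\neq\emptyword$, the name fact gives that $\f{u_{i+1}}=\l{\tau(\widehat{u_i})}$ has name $b$; by the prefix clauses of Definitions~\ref{def:P2G}(i) and~\ref{def:3gen_critical}(i), a first letter of name $b$ forces the relevant prefix factor of $u_{i+1}$ to be a power of $b$, which contains no occurrence of the third generator, so again $\alpha_{i+1}=\emptyword$. This completes (iii).

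The only delicate point is isolating the name fact correctly and checking that Lemma~\ref{lem:2gen_taufacts} is being applied to $\widehat{u_i}$ rather than to $u_i$ itself; once the first letter of $u_{i+1}$ is pinned down to have name $b$ (in the P2G case) or name $c$ with a following letter of name $b$ (in the type-$\{a,b,c\}$ case), each conclusion about $\alpha_{i+1}$ and about the admissible type of $u_{i+1}$ is immediate from the prefix clauses of the definitions and the fact that only $a$ and $c$ commute in $G$.
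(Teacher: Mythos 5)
Your proof is correct and follows essentially the same route as the paper's: all four parts are read off from the form that Definition~\ref{def:RRS} forces on $u_{i+1}$, using the commuting structure of $G$ and Lemma~\ref{lem:2gen_taufacts}. The only difference is that you make explicit the ``name fact'' ($\beta(u)\neq\emptyword$ forces $\l{\tau(\widehat{u})}$ to have name $b$), which the paper's terser one-line arguments for (ii) and (iii) leave implicit.
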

\begin{proof} (partly from \cite[Lemma 3.10]{BCMW}).
(i) If $u_m$ had type $\{a,b,c\}$ then $\f{u_{m+1}} = c^{\ee}$ would not
commute with the second letter of $u_{m+1}$, which has name $b$, contrary to
assumption.

(ii) The letters in $\beta_m$, which is a subword of $u_{m+1}$,
do not commute with $\f{u_{m+1}} = \l{\tau(\widehat{u_m})}$,
so $\beta_m = \emptyword$.

(iii) If $\beta_i \ne \emptyword$ then the first two letters of $u_{i+1}$ do
not commute, so $\alpha_{i+1} = \emptyword$.

(iv) If $u_i$ has type $\{a,b,c\}$ then the first two letters have names $b$
and $c$, so $\alpha_{i+1} = \emptyword$ and $u_{i+1}$ cannot have type
$\{a,b\}$.
\end{proof}

As in \cite{BCMW}, we can think of an RRS as a way of applying successive
$\tau$-moves and commutations, moving from left to right, until a free
reduction becomes possible.
To apply the RRS $U$ to $w$, we first replace the subword $u_1=w_1$ by
$\tau(u_1)$ (or by $\s{w_1}\f{w_1}$ when $m=0$). 
For $m>0$, this results in
a word with subword $u_2$ consisting of a suffix of $\tau(u_1)$ followed
by $w_2$. When $m>1$ we replace $u_2$ by $\tau(u_2)$ and continue like
this, 
replacing $u_i$ by $\tau(u_i)$ for each $i\leq m$, 
finally replacing $u_{m+1}$ by $\s{u_{m+1}}\f{u_{m+1}}$. The
resulting word has the freely reducing subword $\f{\gamma}^{-1}\f{\gamma}$. 

\begin{lemma}\label{lem:RRScheck}
We can check in linear time whether a factorisation
$w=\mu w_1 \cdots w_mw_{m+1}x$ of a word $w$ is associated with an RRS $U$
of $w$.
\end{lemma}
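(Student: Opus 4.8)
The plan is to use the fact that the given factorisation $w = \mu w_1\cdots w_m w_{m+1}\gamma$ determines the candidate sequence $U$ uniquely, and then to verify all the requirements of Definition~\ref{def:RRS} in a single left-to-right pass. We set $u_1 := w_1$ and, having computed $u_{i-1}$ together with its type, its core $\widehat{u_{i-1}}$, the letter $\l{\tau(\widehat{u_{i-1}})}$ and the powers $\alpha_{i-1},\beta_{i-1}$ (and, in the $\{a,b,c\}$ case, also $\ee$ and $\kk$), we form $u_i$ from $u_{i-1}$ and $w_i$ by the formula of Definition~\ref{def:RRS}(i) or (ii). The conditions to check are that $w_i\neq\emptyword$ for $i\le m$, that each $u_i$ with $i \le m$ is critical of one of the three permitted types, that $u_{m+1}=xv$ with $x$ commuting with every letter of $v$, and that $\f{u_{m+1}}=\f{\gamma}^{-1}$. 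Each is a local test, and we return \false\ as soon as one fails.

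The second step is to bound the cost of processing a single $u_i$ by $O(|u_i|)$. For a $2$-generator or P2G candidate, one scan deletes the third-generator letters to produce $\widehat{u_i}$ and records $\alpha_i,\beta_i$, after which Lemma~\ref{lem:critlintest} tests criticality of $\widehat{u_i}$. For a candidate of type $\{a,b,c\}$ we locate the decomposition $u_i = u_p u_q u_r$ by scanning (the boundaries are forced by the block structure of Definition~\ref{def:3gen_critical}), verify the P2G conditions on $u_r$ and test whether $\widehat{u_r}$ reduces to $b^{\ii} a^{\jj} b^{\kk}$ using Lemma~\ref{lem:abcrit2}, and finally check that $u^\# = u_p u_q \alpha(u_r) b^{\ii}$ is P2G-critical of type $\{b,c\}$. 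Since there are only three candidate types to try, each requiring a constant number of such scans, the per-step cost is $O(|u_i|)$, and this also yields the data $\l{\tau(\widehat{u_i})}$, $\alpha_i$, $\beta_i$, $\ee$, $\kk$ needed to construct $u_{i+1}$.

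It remains to show $\sum_i |u_i| = O(|w|)$, which is the main obstacle. The fresh blocks $w_i$ are pairwise disjoint subwords of $w$ with $\sum_i |w_i| \le |w|$, and for $i\ge 2$ the word $u_i$ consists of the short shared prefix $\l{\tau(\widehat{u_{i-1}})}\beta_{i-1}$ (case (i)) or $c^{\ee} b^{\kk} \beta_{i-1}$ (case (ii)) of length $O(1)+|\beta_{i-1}|$, followed by $w_i$; this prefix occupies the right end of the occurrence of $u_{i-1}$ in $w$. The only way the total could grow super-linearly is if a carried power $\beta_{i-1}$ of the third generator were reused repeatedly; but by Lemma~\ref{lem:RRSdetails}(iii),(iv) a nonempty $\beta_{i-1}$ forces $\alpha_i=\emptyword$ and $u_i$ to be of type $\{b,c\}$ or $\{a,b,c\}$, so that $\beta_{i-1}$ is absorbed into the two-generator core $\widehat{u_i}$ rather than re-emerging as the next carried power $\beta_i$. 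Consequently each position of $w$ lies in at most two of the words $u_i$, giving $\sum_i |u_i| \le 2|w| + O(m) = O(|w|)$ since $m \le |w|$. The delicate point — and the crux of the argument — is precisely this charging step: one must use Lemma~\ref{lem:RRSdetails} to confirm that a reused power of the third generator is never carried forward twice, so that the overlaps between consecutive $u_i$ do not accumulate and the whole verification runs in linear time.
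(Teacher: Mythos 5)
Your proposal is correct and follows the same route as the paper, whose entire proof consists of citing Lemma~\ref{lem:critlintest} and Lemma~\ref{lem:abcrit2} and asserting that each $\tau$-move can be executed in linear time. What you add is the amortization argument that $\sum_i|u_i|=O(|w|)$, which the paper leaves entirely implicit; this is a genuine and worthwhile supplement, and your conclusion is right, though the justification is slightly misattributed: Lemma~\ref{lem:RRSdetails}\,(iii),(iv) do not say that a nonempty $\beta_{i-1}$ forces $u_i$ to have type $\{b,c\}$ or $\{a,b,c\}$ (indeed, when $u_{i-1}$ has type $\{b,c\}$ and $\beta_{i-1}$ is a nonempty power of $a$, the word $u_i$ must have type $\{a,b\}$). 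The fact you actually need survives for a more direct reason: $\beta_{i-1}$ sits immediately after the single-letter prefix $(u_i)_p$ of $u_i$ and hence lies in $(u_i)_q$, while $\beta_i$ is extracted from the suffix $(u_i)_s$ (or $((u_i)_r)_s$), which is contained in $w_i$; so $|\beta_i|\le|w_i|$, each position of $w$ is charged at most twice, and the linear bound holds.
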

\begin{proof} By Lemma~\ref{lem:abcrit2}, we can check in linear time whether
a given word could occur as the suffix $(u_i)_r$ of a critical word $u_i$ of
type $\{a,b,c\}$. Given this and Lemma~\ref{lem:critlintest},
we see easily that all types of $\tau$-moves can be
executed in linear time, and the result follows.
\end{proof}
 
\begin{example}~\label{eg:tricky_w_in_G_2}
We consider the word
$w=(b,c)_{n-1} \cdot aba \cdot  {}_{n-2}(c,b)x^{-1}$ that was considered 
in Example~\ref{eg:tricky_w_in_G}.
That admits an RRS of length 2, with associated decomposition $w_1w_2w_3x^{-1}$,
where $w_1=(b,c)_{n-1}aba$, $w_2={}_{n-2}(c,b)$ and $w_3$ is empty.

The word $w_1$ is a 3-generator critical word of type $\{a,b,c\}$,
as we observed in Example~\ref{eg:3gen_critical_tau}, and
$\tau(w_1) = (c,b)_{n-1}acb$ (as we computed in that example).

Now $u_2 = cb\cdot w_2 = {}_n(c,b)$, and $\tau(u_2)={}_n(b,c)$.
So application of the RRS to $w$ yields
\begin{eqnarray*}
w &\to& \tau((b,c)_{n-1} aba) \cdot {}_{n-2}(c,b)x^{-1}
=(c,b)_{n-1}acb\cdot  {}_{n-2}(c,b)x^{-1}\\
	&\to& (c,b)_{n-1} a \cdot  \tau({}_{n}(c,b))x^{-1} 
 = (c,b)_{n-1} a  \cdot {}_{n}(b,c)x^{-1},
\end{eqnarray*}
which ends in the cancelling pair $xx^{-1}$.
\end{example}

The following example with $n=4$ shows that we need $n$ to be at least $5$.

\begin{example} \label{eg:n_atleast5}
When $n=4$, the word $(bab)(cbca)(bc)b^{-1}$ admits the RRS
\[ bab, acbca,bcbc,bb^{-1}\] associated with the decomposition of length 3 that
corresponds to the bracketing shown, and hence allows the sequence of reductions
\begin{eqnarray*}
(bab)cbcabcb^{-1} 
	&\to& \tau(bab)cbcabcb^{-1} = ab(acbca)bcb^{-1}\\
	&\to& ab(c\tau(aba)c)bcb^{-1}=abcba(bcbc)b^{-1}\\
&\to& abcba(\tau(bcbc)=abcbacbcbb^{-1}\to abcbacbc.\end{eqnarray*}
But $bacbcbabcb^{-1}$, which clearly represents the same element of $G$, admits no RRS; its only critical subword is $bab$.
\end{example}

We define an \emph{optimal RRS} for a word in a $(2,3,n)$-group $G$ as follows 
(following \cite{BCMW}).
\begin{definition}[Optimal RRS]
\label{def:optRRS}
We say that an RRS $U=u_1,\ldots,u_{m+1}$ for $w$, with associated
decomposition $w=\mu w_1\cdots w_{m+1}\gamma$, is \emph{optimal} if
\begin{mylist}
\item[{\rm (i)}] the first letter of $w_1$ is at least as far to the right as in
any other RRS for $w$, i.e. $\mu$ is as long as we found possible,
\item[{\rm (ii)}] 
	$\f{\gamma}$ does not appear in $w_{m+1}$,
\item[{\rm (iii)}] if, for some $i>1$ with $1 < i \le m$, $u_{i-1}$ is
	a P2G word of type $\{x,y\}$ and $\alpha_i=\emptyword$, then $u_i$ is 
\begin{mylist}
\item[either] a P2G word  of type $\{z,t\}$
with $|\{x,y\} \cap \{ z,t\} \} | = 1$ 
\item[or] a
critical word of type $\{a,b,c\}$ with $|\{x,y\} \cap \{b,c\} \} | = 1$.
\end{mylist}
\end{mylist}
\end{definition}
Note that 
our condition (iii) is slightly different  from the third condition for optimality of \cite{BCMW}; that is because we have no generator in $G$ that commutes with two other generators. 
We prefer to state our condition (iii) in this form for greater readability.
Note also that it is possible to have
two successive $\tau$-moves of type $\{a,b,c\}$ in an optimal RRS. 

We shall use the following properties of an optimal RRS.
\begin{lemma} \label{lem:properties_optRRS}
Let U be an optimal RRS of length $m$ and $1 \le i \le m$.
\begin{mylist}
\item[(i)] The word $u_i$ has no proper critical suffix of the same type as
$u_i$ that is contained within $w_i$.
\item[(ii)] Let $x$ be the first letter in $u_i$ after $\f{u_i}$ that is
not part of $\alpha_i$. Then $x$ does not have the same name as $\f{u_i}$.
\end{mylist}
\end{lemma}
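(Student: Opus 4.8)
Let $U$ be an optimal RRS of length $m$ and $1 \le i \le m$.
\begin{mylist}
\item[(i)] The word $u_i$ has no proper critical suffix of the same type as $u_i$ that is contained within $w_i$.
\item[(ii)] Let $x$ be the first letter in $u_i$ after $\f{u_i}$ that is not part of $\alpha_i$. Then $x$ does not have the same name as $\f{u_i}$.
\end{mylist}

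\begin{proof}[Proof proposal]
The plan is to treat the two parts separately, in each case deriving a contradiction from the optimality conditions of Definition~\ref{def:optRRS} by exhibiting a ``better'' RRS. For part~(i), suppose for contradiction that $u_i$ has a proper critical suffix $v$ of the same type, with $v$ contained within $w_i$. The idea is that $v$ could serve as the start of a \emph{competing} RRS whose first letter sits strictly to the right of $\f{w_i}$. I would distinguish the two positions of $i$. If $i=1$, then $u_1 = w_1$, and since $v$ is a proper suffix of $u_1$ lying in $w_1$, taking $v$ as the first critical word of a new RRS (with the remaining words $w_2,\ldots,w_{m+1}$ unchanged, and the intervening prefix absorbed into a longer $\mu$) produces an RRS for $w$ whose $w_1$ begins strictly further right, contradicting optimality condition~(i). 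The step that needs care here is checking that $v$ really does generate a valid continuation: because $v$ has the \emph{same type} as $u_1$, applying $\tau$ to $v$ yields a suffix with the same final letter behaviour, so the overlap with $w_2$ that defines $u_2$ is unaffected; this is where Lemma~\ref{lem:critsubword} (for the 2-generator/P2G case) and the criticality conditions in Definitions~\ref{def:P2G_critical_tau} and~\ref{def:3gen_critical} are invoked to guarantee $v$ is genuinely critical of that type. If $i>1$, the suffix $v$ lies strictly inside $w_i$ rather than crossing into the overlap region coming from $\tau(u_{i-1})$; here I would argue that one can shorten the RRS by terminating the previous portion earlier and restarting at $v$, again producing a first critical word further right or a shorter sequence, contradicting optimality.

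For part~(ii), I would argue directly from the structure of critical words together with the ``same name'' characterisation. Let $x$ be the first letter of $u_i$ after $\f{u_i}$ that is not part of $\alpha_i$. By Definitions~\ref{def:P2G} and~\ref{def:3gen_critical}, after stripping $\alpha_i$ (a power of the commuting generator), the word $\widehat{u_i}$ begins with $\f{u_i}$ followed by a letter of the \emph{other} pseudo-generator name, by condition~(ii) of those definitions, which requires $u_q$ to start with a letter whose name differs from that of $\f{u}$. So the naive claim is almost immediate from the definition; the content of part~(ii) is really the assertion that $x$ is precisely this first letter of the $\widehat{u_i}$-part, i.e. that $\alpha_i$ absorbs exactly the initial commuting-generator block and nothing of the pseudo-generator block. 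I would make this precise by observing that $\alpha_i = \alpha(u_i)$ is by definition the total power of the commuting generator in the prefix $u_p$, and that $u_p$ terminates before the first pseudo-generator letter occurs; hence $x$ is the first pseudo-generator letter, whose name differs from $\f{u_i}$. The only subtlety is the case $\f{u_i}\in\{b,b^{\pm1}\}$, where $u_p$ is a power of $b$ and $\alpha_i=\emptyword$, so that $x=\f{u_q}$ directly; Lemma~\ref{lem:P2G_taufacts}(i) confirms $\f{u_i}=\f{\widehat{u_i}}$ in all cases, closing this off.

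The main obstacle I anticipate is part~(i) in the regime $i>1$, where $u_i$ is built by overlapping a suffix of $\tau(u_{i-1})$ with $w_i$, so a critical suffix $v$ ``contained within $w_i$'' must be shown to yield a \emph{legitimate} alternative RRS rather than merely a critical word. The delicate point is that replacing $u_{i-1}$ by a shorter or differently-placed critical word could disturb the overlap structure encoded in cases~(i) and~(ii) of Definition~\ref{def:RRS}; I expect to need the constraints of Lemma~\ref{lem:RRSdetails} (especially that $\beta_{i-1}\neq\emptyword \Rightarrow \alpha_i=\emptyword$) together with optimality condition~(iii) to rule out the competing configuration. The cleanest formulation is probably to argue by a minimal-counterexample / leftmost-violation choice: take the smallest $i$ for which the suffix $v$ exists, and show that $v$ either contradicts the maximality of $\mu$ (when $i=1$) or can be spliced in to shorten the sequence while preserving validity (when $i>1$), the latter contradicting the implicit minimality built into optimality.
\end{proof}
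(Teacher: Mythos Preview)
Your argument for part~(ii) has a genuine gap. You claim that the statement is ``almost immediate from the definition'' because condition~(ii) of Definitions~\ref{def:P2G} and~\ref{def:3gen_critical} forces $\f{u_q}$ to have a different name from $\f{u}$. But the letter $x$ in the lemma need not be $\f{u_q}$: the prefix $(u_i)_p$ can contain \emph{several} letters whose name is a pseudo-generator. For example, if $\f{u_i}=b^{\pm1}$ then $(u_i)_p$ is a power of $b$ and $\alpha_i=\emptyword$, so $x$ is simply the second letter of $u_i$; nothing in the definition prevents $(u_i)_p=b^2$, making $x=b$. Likewise, if $\f{u_i}$ has name equal to the pseudo-generator $a$ or $c$, then $(u_i)_p$ is an $\{a,c\}$-word that may contain more than one letter with that name interspersed with the commuting generator. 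In all such cases your structural reading of the definitions gives no contradiction. The paper's proof instead uses optimality: when $x$ lies in $w_i$ and $x=\f{u_i}$, the suffix of $u_i$ starting at $x$ is itself critical of the same type and lies in $w_i$, contradicting part~(i); and when $x$ is the second letter of $u_i$ coming from $\tau(u_{i-1})$ (i.e.\ when $i>1$ and $u_{i-1}$ has type $\{a,b,c\}$ or $\beta_{i-1}\neq\emptyword$), one reads off directly from the form of $\tau(u_{i-1})$ that the name of $x$ differs from that of $\f{u_i}$. You do not invoke optimality in part~(ii) at all, which should be a warning sign.

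For part~(i) your idea is right, but you over-engineer it. There is no need to split into $i=1$ and $i>1$, nor to worry about ``disturbing the overlap structure'': one simply discards everything before the critical suffix $v\subseteq w_i$ and takes $v,u_{i+1},\ldots,u_{m+1}$ as a new RRS with decomposition $\mu' v\, w_{i+1}\cdots w_{m+1}\gamma$ where $\mu'=\mu w_1\cdots w_{i-1}w_{i1}$ and $w_i=w_{i1}v$. Since $v$ is a suffix of $u_i$ of the same criticality type, it has the same last letter and the same $p,n$ values (hence the same $\l{\tau(\widehat{\;\cdot\;})}$ and $\beta$), so the continuation with $u_{i+1}$ is unchanged. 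This new RRS starts strictly to the right of $w_1$, contradicting optimality condition~(i) directly; no appeal to ``shorter sequences'' or to Lemma~\ref{lem:RRSdetails} is needed.
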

\begin{proof} (i) If $u_i$ had such a suffix $w_{i2}$ with $w_i=w_{i1}w_{i2}$,
then we could define a new RRS $w_{i2},u_{i+1},\ldots,u_{m+1}$ for $w$ with
decomposition $\mu'w_{i2}w_{i+1} \cdots w_{m+1}\gamma$ where
$\mu' = \mu w_1\cdots w_{i-1}w_{i1}$, contradicting the optimality of $U$.

(ii) If $i>1$ and either $u_{i-1}$ has type $\{a,b,c\}$ or $\beta_{i-1} \ne
\emptyword$, then $x$ is the second letter of $u_i$ and is part of
$\tau(u_{i-1})$, and we see directly from the definition of $\tau(u_{i-1})$
that $x$ has a different name from $\f{u_i}$. Otherwise $x$ is in $w_i$.
We cannot have $x = \f{u_i}^{-1}$ because in that case $\widehat{u_i}$ would not
be freely reduced and so would not be critical. If $x = \f{u_i}$ then $u_i$
would have a critical suffix of the same type as $u_i$ starting at $x$,
contradicting (i). So $x$ does not have the same name as $\f{u_i}$.
\end{proof}

\begin{proposition} \label{prop:exists_optRRS}
If a word $w$ in a $(2,3,n)$-group admits an RRS then it admits an
optimal RRS.
\end{proposition}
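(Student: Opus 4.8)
The plan is to select, among all RRSs for $w$, one that is extremal with respect to a two-stage criterion, and then to show that extremality forces the three optimality conditions (after a harmless local adjustment at the right end). Since $w$ admits an RRS by hypothesis and the length $|\mu|$ of the leading factor in any RRS is a nonnegative integer bounded by $|w|$, I would first pass to the (nonempty) subfamily of RRSs for which $|\mu|$ is maximal. This is exactly condition~(i) of Definition~\ref{def:optRRS}. Among these I would then choose an RRS whose length $m$ is minimal, which is possible since $m \ge 0$. The substance of the argument is to verify that this doubly-extremal RRS satisfies conditions~(ii) and~(iii).

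For condition~(iii), suppose it fails at some index $i$ with $2 \le i \le m$. By the enumeration of the intersection sizes, failure means precisely that $u_{i-1}$ is P2G of type $\{a,b\}$ and $u_i$ of type $\{a,b\}$, or $u_{i-1}$ of type $\{b,c\}$ and $u_i$ of type $\{b,c\}$, or $u_{i-1}$ of type $\{b,c\}$ and $u_i$ of type $\{a,b,c\}$; in every case $\alpha_i = \emptyword$, so by Lemma~\ref{lem:P2G_taufacts}(iii) the letter $\f{\tau(u_{i-1})}$ is a pseudo-generator and the image of $u_{i-1}$ under $\tau$ meets $u_i$ inside the shared $2$-generator subgroup. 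I would then show that the span covered by $u_{i-1}$ and $u_i$ contains a single critical word $u'$ of a legitimate type whose $\tau$-image agrees with that of $u_i$ at its right end: Lemma~\ref{lem:critsubword} supplies the critical suffix and Lemma~\ref{lem:2gengeo} controls the $2$-generator geodesics, while Lemma~\ref{lem:RRSdetails} pins down the admissible type transitions. Replacing $u_{i-1},u_i$ by $u'$ yields a valid RRS of length $m-1$ with the same $\mu$ (the merge occurs at index $i-1 \ge 1$, leaving $w_1$ and hence $\mu$ untouched) and with $u_{i+1},\dots,u_{m+1}$ unchanged, contradicting the minimality of $m$. Hence (iii) holds.

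Condition~(ii) is then secured by a final adjustment of the last factor that alters neither $\mu$ nor $m$. If a letter with name $\f{\gamma}$ occurs in $w_{m+1}$, then since $\f{u_{m+1}} = \f{\gamma}^{-1}$ commutes with every letter of $\s{u_{m+1}}$ (and $\beta_m = \emptyword$ by Lemma~\ref{lem:RRSdetails}(ii)), the leftmost such occurrence furnishes an earlier free cancellation. Relocating the cancellation point there shortens $w_{m+1}$ and lengthens $\gamma$, leaving $u_1,\dots,u_m$ and $\mu$ fixed; since this cannot decrease $m$ without contradicting the minimality already obtained, it merely replaces $u_{m+1}$ by a shorter word and deletes the offending occurrence, establishing (ii) while preserving (i) and (iii).

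The main obstacle is the merging step in the proof of (iii): one must check, case by case according to the types of $u_{i-1}$ and $u_i$, that the two overlapping critical words genuinely combine into a single critical word of admissible type and that the downstream moves $u_{i+1},\dots,u_{m+1}$ remain correctly determined after the merge. The case where $u_i$ has type $\{a,b,c\}$ (following a type-$\{b,c\}$ P2G word) is the most delicate, since one has to verify that the four clauses of Definition~\ref{def:3gen_critical} are inherited by $u'$ and that the integer data $\ii,\jj,\kk$ and the sign $\ee$ governing the move $\tau(u')$ coincide with those for $u_i$; the remaining $2$-generator cases should reduce to the critical-subword bookkeeping of Lemmas~\ref{lem:critsubword} and~\ref{lem:abcrit1}.
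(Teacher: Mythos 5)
Your proposal is correct and follows essentially the same route as the paper: fix condition (i) by maximising $|\mu|$, obtain (iii) by merging $u_{i-1}$ and $w_i$ into a single critical word of the same type as $u_i$ whenever (iii) fails (the paper phrases this as an induction on $m$ where you phrase it as minimality of $m$, which is the same argument), and secure (ii) by truncating $w_{m+1}$ at an earlier cancellation. The case analysis you flag as the "main obstacle" — verifying that $u_{i-1}w_i$ is genuinely critical of the required type — is asserted with comparable brevity in the paper itself, so nothing essential is missing.
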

\begin{proof}
This proof is essentially the same as the existence part of the proof of
\cite[Lemma 3.12]{BCMW} for $3$-free groups.
We can clearly restrict attention to those RRS in which the first letter of $w_1$ is as far right as possible. 

Then, given such an RRS associated to a decomposition $w_1\cdots w_{m+1}\gamma$ of $w$ for which
$\l{\tau(u_i)}=\f{w_{i+1}}$ for some $i \leq m$, we can associate the RRS $u_1,\cdots,u_i,\l{\tau(u_i)}$ with the decomposition $\mu w_1\cdots w_i\gamma'$, where\\
$\gamma'=w_{i+1}\cdots w_{m+1}\gamma$. 
And given an RRS associated with a decomposition for which $\f{\gamma}$ appears in $w_{m+1}$ we can certainly replace $w_{m+1}$ by a prefix that ends before $\f{\gamma}$. 

So we can find an RRS satisfying both conditions (i) and (ii).

We prove by induction on $m$ that we can find such an RRS satisfying all three
conditions.
For suppose we have an RRS satisfying conditions (i) and (ii), but not (iii),
for some $i$ with $1 \leq i \leq m$.
Then, as we observed above, the hypothesis of this condition could apply only
when $\alpha_i = \emptyword$ and, for the conclusion to fail, either $u_i$ would
have to be a P2G word of the same type as $u_{i-1}$, or else $u_{i-1}$ and $u_i$
would be of types $\{b,c\}$ and $\{a,b,c\}$, respectively. In both of these
cases $u_{i-1}w_i$ would be critical of the same type as $u_i$, and we could
combine $w_{i-1}$ and $w_i$ in the decomposition to contain an RRS $U'
=u_1',\ldots,u_m',u_{m+1}'$ of length $m-1$ with $u_j'=u_j$ for $j<i-1$,
$u_{i-1}' = u_{i-1}w_i$, and $u_j' = u_{j+1}$ for $i \le j \le m$. 
\end{proof}

\begin{definition}[The set $W$]
	\label{def:W}
We define $W$ to be the set of words that admit no RRS. 
\end{definition}
As we already observed,
a word that is not
freely reduced admits an RRS of length $0$ with $w_{m+1} = \emptyword$,
so all words in $W$ are freely reduced. 
Since a word admitting an RRS is transformed to a freely reducing word by its 
application, any geodesic word must be in $W$. 

We now describe a linear time procedure that, given a word $w \in W$ and 
$x \in A$, attempts to construct an optimal RRS that transforms $wx$ to a word ending in $x^{-1}x$.
The procedure may terminate with a candidate sequence and associated factorisation, or may fail
to terminate.
If the procedure fails to find a candidate sequence, or if the candidate sequence that is found fails to transform  $wx$ to a word ending in $x^{-1}x$, then
$wx$ is proved to be in $W$.

We prove correctness of this procedure in Proposition~\ref{prop:unique_optRRS},
which follows the procedure.

\begin{procedure}\label{proc:unique_optRRS}
Search a word for an optimal RRS.
\begin{mylist}
\item[{\bf Input:}] $w \in W$, $x \in A$. 
\item[{\bf Ouput:}] Either \fail\ (when $wx \in W$), or the words $u_i$ and the
decomposition $\mu w_1 \cdots w_mw_{m+1}x$ associated with the unique
optimal RRS for $wx$, together with the criticality types of the words $u_i$.
\end{mylist}

\smallskip
The procedure has two principal parts. In the first part, either \fail\ is
returned, or the data associated with a putative (unique optimal)
RRS $U$ for $wx$ is computed.

In the second part, if \fail\ was not returned, then it is checked whether
the putative RRS $U$ really is an RRS for $wx$. If so, then the data
associated with $U$ are returned, and otherwise \fail.

The first part of the procedure contains $m+1$ steps and constructs the
putative decomposition of $w$ from right to left. 

\smallskip
{\bf Step 1:} 

\begin{myproclist}
\item Identify $w_{m+1}'$ as the longest suffix of $w$ that 
commutes with $x$ and does not contain $x^{-1}$.
\item If $w_{m+1}'$ is the whole of $w$, then return \fail.
\item \emph{Check if $m=0$}: If the letter immediately to the left of the
suffix $w_{m+1}'$ within $w$ is $x^{-1}$, deduce that $m=0$, define
$u_1:=w_1 := w_{m+1}'$, return the associated
decomposition $w = \mu w_1 x$ and \stop.
\item Deduce that $m>0$, define $w_{m+1}:=w_{m+1}'$,
and identify the location of the righthand end of $w_m$ as the position within
$w$ immediately to the left of $w_{m+1}$.
Let $s,t$ be the names of $x$ and of the last letter in $w_m$.
\item	If $s=t$, then return \fail.
\item Identify the criticality type of $u_m$ as P2G of type $\{s,t\}$.
\end{myproclist}

Now for each of $i:=m,\ldots,1$, given the location of the righthand end of
$w_i$ and the criticality type of $u_i$, perform Step $m+2-i$.

\smallskip
{\bf Step $\mathbf{m+2-i}$:}

\begin{myproclist}
\item Define $\vv_i$ to be the suffix of $w$ that ends at the end of $w_i$.
\item \emph{Check if $i=1$}: if $\vv_i$ has a critical suffix of the criticality
type that has been determined for $u_i$, then deduce that $i=1$,
identify $u_1=w_1$ as the shortest critical suffix of $\vv_i$ of that type,
	and {\bf Proceed to Checking}.
\item Now $i>1$.  Select one of three different cases 1--3 below,
depending on the criticality type $\{a,b\}$, $\{b,c\}$ 
or $\{a,b,c\}$  identified for the critical word $u_i$. 
\end{myproclist}
	{\bf Note:} In each of the three cases 1--3, a letter will be identified at a 
particular position within the prefix $\vv_i$ of $w$, which we call a
\emph{distinguished letter}, and which we denote by $\disting{x}$,
where $x$ is its name.
We call the closest letters to the distinguished letter $\disting{x}$ within
$w$ that have names distinct from $x$ its \emph{differently named neighbours}
to its \emph{left} or \emph{right}, and denote them by $\leftnbr{x}$ and
$\rightnbr{x}$, respectively.

In all of the following code, if we fail to find $\disting{x}$ or
$\leftnbr{x}$ because we reach the left hand end of $w$, then we return \fail.

\smallskip
{\bf Step $\mathbf{m+2-i}$, Case 1:} the word $u_i$ is P2G-critical of type
$\{a,b\}$.

\begin{myproclist}
\item Define the \emph{distinguished letter with name $c$}, $\disting{c}$, to
be the first letter with name $c$ that is encountered within $w$ after at
least one letter with name $b$ while moving leftward from the rightmost letter
of $\vv_i$.
\item  If both $\leftnbr{c}$ and $\rightnbr{c}$ hav name $b$, 
then identify 
the location of the rightmost letter of $w_{i-1}$ 
as the position of $\leftnbr{c}$, 
and the criticality type of $u_{i-1}$ as P2G of type $\{a,b\}$.
\item 
Otherwise identify the location of the righthand end of $w_{i-1}$ 
	at $\disting{c}$, and 
the criticality type of $u_{i-1}$ as P2G of type $\{b,c\}$.
\end{myproclist}

{\bf Step $\mathbf{m+2-i}$, Case 2 or 3:} the word $u_i$ is critical of type
$\{b,c\}$ or $\{a,b,c\}$.

\begin{myproclist}
\item In Case 2,
define the \emph{distinguished letter with name $a$}, $\disting{a}$, to be the
first letter with name $a$ that is encountered within $w$ after at least one letter with name $b$ while moving leftward from the rightmost letter of $\vv_i$.
\item In Case 3,
define the \emph{distinguished letter with name $a$}, $\disting{a}$, to be the
first letter with name $a$ that is encountered within $w$ after at least one 
letter with name $b$ followed by at least one letter $c$, followed by at 
least one more letter $b$, while moving leftward from the rightmost letter of
$\vv_i$.
\item In either case, define $\leftnbr{a}$ and $\rightnbr{a}$ relative to $\disting{a}$, and locate all three of $\disting{a}$, $\leftnbr{a}$, $\rightnbr{a}$, moving leftward from the rightmost letter of $\vv_i$.

\item If both $\leftnbr{a}$ and $\rightnbr{a}$ have name $b$, 
then identify the location of the rightmost letter of $w_{i-1}$ 
	as the position of $\leftnbr{a}$,
and the criticality type of $u_{i-1}$ as P2G of type $\{b,c\}$.
\item
Otherwise identify the location of the righthand end of $w_{i-1}$ at
$\disting{a}$, the criticality type of $u_{i-1}$ as P2G of type $\{a,b\}$ or
of type $\{a,b,c\}$.
\item 	\begin{myproclist}
	\item  If $\vv_{i-1}$ has no critical P2G suffix of type $\{a,b\}$,
		then identify the criticality type of $u_{i-1}$
		as P2G of type $\{a,b\}$.
	\item Otherwise let $u'_1$ be the shortest such suffix, and define
		$u_2' := \l{\tau(\hat{u_1'})} \beta(u_1') w_i$.
\item		\begin{myproclist}
		\item If $u_2'$ is P2G-critical of type $\{b,c\}$,
		then $u'_1,u'_2$ are the first two terms $u_1,u_2$ 
		of the putative RRS $U$. Identify $i$ as $2$,
		$w_1$ as the subword $u'_1$ of $w$, the criticality type of 
		$u_{i-1}=u_1=u'_1$ as P2G of type $\{a,b\}$, and proceed to the
checking part of the procedure.
	        \item Otherwise identify the criticality type of $u_{i-1}$ 
        	as $\{a,b,c\}$.
		\end{myproclist}
	\end{myproclist}
\end{myproclist}

\medskip
{\bf Checking:} We now proceed to the second part of the procedure, where
we check whether the putative RRS $U$ that we have computed really is an
RRS for $wx$. Note that the check is unnecessary when $m=0$.

\begin{myproclist}
  \item If $m=0$, then {\bf return} the data associated with $U$.
\item Define $u_1:= w_1$.
\item	For $i:=2,\ldots,m$, do the following:
  \item \begin{myproclist}
       \item If $u_{i-1}$ is a P2G-critical word,  
	   then $u_i := \l{\tau(\widehat{u_{i-1}})} \beta(u_{i-1}) w_i$;
       \item If $u_{i-1}$ is a critical word of type $\{a,b,c\}$, then
    $u_i: = c^{\ee} b^{\kk}  \beta_{i-1} w_i$,
    using the notation of Definition~\ref{def:3gen_tau}.
    \item If $u_i$ is not critical of the specified type, then return \fail.
  \end{myproclist}
  \item Return the data associated with $U$.
\end{myproclist}
	
\end{procedure}

\begin{proposition}\label{prop:unique_optRRS}
Suppose that $w \in W$ and $x \in A$. 
The procedure described as \ref{proc:unique_optRRS} runs in linear time.
If $wx \not \in W$, then the procedure terminates with the decomposition of 
$\mu w_1 \cdots w_mw_{m+1}x$ of $w$, 
associated with the unique optimal RRS
$U=u_1,\ldots,u_m,u_{m+1}$ for the reduction of $wx$,
If $wx \in W$, then the procedure returns \fail, either because it fails
to compute a putative RRS $U$, or because $U$ fails the checking process.
\end{proposition}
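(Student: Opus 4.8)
The plan is to establish the three claims — linear running time, correctness when $wx \notin W$, and correctness when $wx \in W$ — by tracking the invariant that the data computed in the first part of Procedure~\ref{proc:unique_optRRS} agrees with the unique optimal RRS whenever one exists. The central idea is that an optimal RRS is \emph{determined from right to left}: the righthand end of $w_{m+1}$ is forced by Step~1 (it must be the maximal suffix commuting with $x$ and avoiding $x^{-1}$), and then, given the righthand end and criticality type of each $u_i$, the righthand end and type of $u_{i-1}$ are forced by the structure of $\tau(u_{i-1})$. I would make this precise by proving, as the key lemma, that in any RRS the penultimate letter of $\tau(u_{i-1})$ that "feeds into" $u_i$ (namely the subword $\l{\tau(\widehat{u_{i-1}})}\beta_{i-1}$ in case (i), or $c^{\ee}b^{\kk}\beta_{i-1}$ in case (ii) of Definition~\ref{def:RRS}) locates a distinguished letter of predictable name — $c$ when $u_i$ has type $\{a,b\}$, and $a$ when $u_i$ has type $\{b,c\}$ or $\{a,b,c\}$ — and that the position of this distinguished letter, together with its differently-named neighbours $\leftnbr{\cdot}$ and $\rightnbr{\cdot}$, pins down the righthand end of $w_{i-1}$ exactly as the case analysis in Steps~1 and $m+2-i$ prescribes.

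First I would handle the \textbf{uniqueness and determinacy} of the right-to-left reconstruction. Using Lemma~\ref{lem:P2G_taufacts}(ii)--(iv) and Lemma~\ref{lem:RRSdetails}, I would verify that the distinguished letter $\disting{c}$ (Case~1) or $\disting{a}$ (Cases~2,3) is correctly identified by the leftward scan described, and that the two-way branch on whether $\leftnbr{\cdot}$ and $\rightnbr{\cdot}$ both have name $b$ distinguishes exactly the situation $\alpha_i = \emptyword$ from $\alpha_i \ne \emptyword$, thereby forcing the type of $u_{i-1}$ by optimality condition (iii) of Definition~\ref{def:optRRS}. The subtlest point is the nested branching in Case~2/3 that decides between type $\{a,b\}$ and type $\{a,b,c\}$ for $u_{i-1}$: here I would invoke Lemma~\ref{lem:abcrit2} to argue that testing whether $\vv_{i-1}$ has a critical P2G suffix of type $\{a,b\}$ and then checking whether the resulting $u_2'$ closes up into a type-$\{b,c\}$ P2G word correctly separates the case that $u_{i-1}$ is genuinely $\{a,b,c\}$-critical (Definition~\ref{def:3gen_critical}(iv)) from the case that it is merely the $\{a,b\}$ P2G word $u_r$ sitting inside. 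Optimality condition~(i), via Lemma~\ref{lem:properties_optRRS}(i), guarantees that selecting the \emph{shortest} critical suffix of the correct type never discards a valid decomposition.

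For the \textbf{running time}, I would argue that each of the $m+1$ steps in the first part performs a single leftward scan that advances the read position monotonically and never revisits letters already consumed by an earlier step, so the total work of the first part is linear in $|w|$; the checking part is linear by Lemma~\ref{lem:RRScheck}. That the scans never backtrack is the reason the bound is linear rather than quadratic, and I would note explicitly that the distinguished-letter searches in successive steps operate on disjoint portions of $w$ (each ending strictly left of the previous $w_i$), with Lemma~\ref{lem:properties_optRRS}(ii) ensuring the scans terminate correctly at a differently-named letter. For the two correctness directions: if $wx \notin W$ then by Proposition~\ref{prop:exists_optRRS} it admits an optimal RRS, and the determinacy argument shows the procedure reconstructs precisely its data and passes the check; conversely, if the procedure returns the data for some $U$ that survives checking, then $U$ satisfies Definition~\ref{def:RRS} by construction, so $wx$ admits an RRS and $wx \notin W$. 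Hence \fail\ is returned if and only if $wx \in W$.

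The step I expect to be the \textbf{main obstacle} is verifying that the distinguished-letter/neighbour rule is both \emph{necessary and sufficient} to locate the righthand end of $w_{i-1}$ — in particular showing that no \emph{other} optimal RRS could place that boundary elsewhere. This requires a careful case-by-case check, keyed on the six forms of $\tau$ in Definition~2 of the 2-generator $\tau$-moves together with the $\{a,b,c\}$ rule of Definition~\ref{def:3gen_tau}, that the leftmost position at which $u_{i-1}$ can validly begin is exactly the one the procedure computes; the right-to-left \emph{overlap} structure, where a suffix of $\tau(u_{i-1})$ must coincide with the prefix of $u_i$ outside $w_i$, is where the argument is most delicate, and I would lean heavily on parts (iii) and (iv) of Lemma~\ref{lem:P2G_taufacts} and on Lemma~\ref{lem:RRSdetails}(iv) to control the interaction between consecutive moves of mixed types.
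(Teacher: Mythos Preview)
Your plan is correct and follows essentially the same approach as the paper: fix an optimal RRS (which exists by Proposition~\ref{prop:exists_optRRS}), then verify step by step that the right-to-left scan of the procedure is forced to recover exactly its data, using the distinguished letter and its differently-named neighbours to locate the righthand end of $w_{i-1}$ and determine the criticality type of $u_{i-1}$; uniqueness then falls out because the argument applies to \emph{any} optimal RRS. The paper's proof consists almost entirely of the case-by-case verification you flag as the main obstacle---split into Cases 1, 2, 3 for the type of $u_i$ and subcases (a), (b), (c) for the type of $u_{i-1}$, with Lemmas~\ref{lem:RRSdetails}, \ref{lem:P2G_taufacts} and \ref{lem:properties_optRRS} doing exactly the work you anticipate---so your plan is an accurate outline of what actually needs to be done.
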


\begin{proof} 
	We assume first that $wx \not \in W$,
	and that 
$\mu w_1 \cdots w_mw_{m+1}x$ is a decomposition of $w$, 
	associated with an optimal RRS $U=u_1,\ldots,u_{m+1}$;
	we know from Proposition~\ref{prop:exists_optRRS}
	that at least one optimal RRS must exist.

We shall prove that Procedure~\ref{proc:unique_optRRS}
must return the selected decomposition and RRS.
It follows from that fact that
this is true whatever optimal RRS is selected that the optimal RRS is uniquely determined.

It should be clear from Lemmas~\ref{lem:critlintest},~\ref{lem:abcrit2} and
~\ref{lem:RRScheck} that the procedure executes in linear time.

We consider the $m+1$ steps of the first part of the procedure, in the order in
which they are applied; we note that (provided that \fail\ is not returned)
the value of $m$ is determined during the 
procedure, but is not known at its start.

The purpose of  Step 1 of the procedure is to locate the subword $w_{m+1}$ within $w$ by locating the position of the final letter of the subword $w_m$,
and check whether $m=0$.  If $m>0$, then Step 1
determines the type of the critical word $u_m$.

For $i=m,\ldots,1$, the $(m+2-i)$-th step, given the location of the right hand end of $w_i$ within $w$ and the type of $u_i$, should determine first whether
$i=1$, and if so, determine the location of the left hand end of $w_i=w_1$. If $i>1$, then it should determine the location of the right hand end of $w_{i-1}$ as well as the type of the critical word $u_{i-1}$.

We use the definitions and notations of distinguished letter $\disting{x}$ and
its nearest differently named neighbours $\leftnbr{x}$ and $\rightnbr{x}$,
and of the prefixes $\vv_i$ of $w$
that were given in the description of the procedure.

\smallskip
{\bf Correctness proof for Step 1.} 

The fact that,
in any factorisation associated with an optimal RRS, 
	$w_{m+1}$ is as defined by the procedure
follows from Definitions~\ref{def:RRS} and ~\ref{def:optRRS}(ii).

If the letter immediately to the left of the suffix $w_{m+1}$ within $w$ is $x^{-1}$, 
then it follows from Definition~\ref{def:RRS} that $wx$ has an RRS
with $m=0$, which is certainly optimal (we cannot start further right).
So $m=0$ for our chosen sequence, and the procedure correctly verifies that.
Otherwise, by definition we do not have an RRS of length $0$, and the procedure 
correctly verifies that.

So now we suppose that $m>0$.
We know that the last letter of $w_m$ must be found immediately before the
first letter of $w_{m+1}$, as located by the procedure,
also that $\l{w_m}$ must have name distinct
from $x$. 

We identify the type of $u_m$ using the following argument.  By
Lemma~\ref{lem:RRSdetails}\,(i) $u_m$ cannot be critical of type $\{a,b,c\}$.
So it must be P2G-critical of type $\{a,b\}$ or $\{b,c\}$;
its two pseudo-generators are the final letters of $u_m$ and $\tau(u_m)$
(here we use Lemma~\ref{lem:P2G_taufacts} and the fact that
$\beta_m=\emptyword)$, hence the names of $\l{w_m}$ and $x$, as (correctly)
identified in Step 1 of the procedure.

\smallskip
{\bf Correctness proof for Step m+2-i.}

Next we verify the correctness of
Step $m+2-i$, for each of $i=m,\ldots,1$. 
We may assume (by induction) that the procedure enters the step having
correctly located the
right hand end of $w_i$ (which is also the right hand end of the prefix $\vv_i$ of $w$)
and the type of the critical word $u_i$.
This step should first determine
whether $i=1$; if $i=1$, it should then determine the location of the 
left hand end  of the subword $w_i=w_1$ in $w$, but if $i>1$ it should
determine the location of the
right hand end of $w_{i-1}$ (and of $\vv_{i-1}$) in $w$, together with the type of the critical word $u_{i-1}$.

\emph{Proving the correctness of the test for $i=1$.}
If $i=1$ then $w_1=u_1$ is critical and,
by Lemma~\ref{lem:properties_optRRS}\,(i), $u_1$ is the shortest critical
suffix of $\vv_i$ that has the same type as $u_i$.
Conversely, if $\vv_i$ has a critical suffix of the same type as $u_i$, then
we get an RRS for $U$ (assuming that there is one) by defining $w_1$ to
be the shortest such suffix, and this RRS is easily seen to be optimal.

\emph{So we may assume now that $i>1$.}  
We have to consider three different cases 1--3
corresponding to the three possible types $\{a,b\}$, $\{b,c\}$ or $\{a,b,c\}$ 
for the critical word $u_i$. These correspond to the three cases 1--3 listed in the procedure. 
We shall see below that the location of the right hand end of $w_{i-1}$ is dependent on the pair of criticality types of the two critical words $u_i,u_{i-1}$.
We label the three different types
$\{a,b\}$, $\{b,c\}$ or $\{a,b,c\}$ for the critical word $u_{i-1}$ as subcases
(a), (b), (c) of the three cases 1--3.
Note that, by Lemma~\ref{lem:RRSdetails}\,(iv), the pair
$u_i$, $u_{i-1}$ cannot have the pair of types $\{a,b\}$, $\{a,b,c\}$,
so we do not need to consider the subcase (c) for Case 1.

In each of the three possible criticality types for $u_i$ (Cases 1--3), we use 
the locations of a distinguished letter $\disting{x}$
and its two differently named neighbours $\leftnbr{x}$ and $\rightnbr{x}$
to determine the criticality type of $u_{i-1}$ 
and the location of the righthand end of $w_{i-1}$.
Precisely how the locations of those three letters relate to the location of
the righthand end of $w_{i-1}$ depends on the criticality type of $u_{i-1}$,
and we shall see in our case by case analysis that follows.

\smallskip
{\bf Step $\mathbf{m+2-i}$, Case 1:} the word $u_i$ is P2G-critical of type
$\{a,b\}$.

The procedure locates the distinguished letter $\disting{c}$ as the
first letter with name $c$ that we encounter within $w$ after at least one 
letter with name $b$ as we work leftward from the rightmost letter of $w_i$).
As we already explained, there are just two subcases to consider here,
1(a) and 1(b). 

\emph{Case 1(a)}: the words $u_i$ and $u_{i-1}$ are both P2G critical of type
$\{a,b\}$.

Condition (iii) of Definition~\ref{def:optRRS} (for optimality) implies that
$\alpha_i\neq \emptyword$, and hence 
$\beta_{i-1} = \emptyword$ (by Lemma~\ref{lem:RRSdetails}\,(iii))
and $u_i = \f{u_i}w_i$ (following Definition~\ref{def:RRS}). So $w_i=\s{u_i}$.

Since $\alpha_i \neq \emptyword$, by definition of $\alpha_i$ as a power of letters that commute with $\f{u_i}$,
$\f{u_i}$ must have name $a$, and $\alpha_i$ must be a power of $c$.
Then (following the definition of an $\{a,b\}$-P2G word), we 
see that $(u_i)_p$ must be an $\{a,c\}$ word. By 
Lemma~\ref{lem:properties_optRRS}\,(ii), $(u_i)_p$ can only have the form
$\f{u_i}\alpha_i$, and the letter $u_i$ immediately to the right of $(u_i)_p$
must have name $b$.

We see that $\f{u_i}=\l{\tau(u_{i-1}}$, and hence
(by Lemma~\ref{lem:P2G_taufacts}) $\l{u_{i-1}}$ ($=\l{w_{i-1}}$) has name
distinct from $a$ (within $\{a,b\}$), that is, 
it has name $b$. We need to locate that letter with name $b$, as we work leftwards from the right hand end of $w_i$ within $v$.
We know that it is the first letter to the left of the prefix
$\alpha_i$ of $w_i$, so we simply need to locate that $\alpha_i$ subword, which
is a non-trivial power of $c$.

Now, as a P2G-critical word of type $\{a,b\}$, $u_i$ must have at
its right hand end a letter with name $a$ or $b$.

If the rightmost letter of $u_i$ has name $a$, then following
Definition~\ref{def:P2G}, the suffix $(u_i)_q(u_i)_s$ 
of $u_i$ (which is all within $w_i$) must contain at least one letter with name
$b$, but any letters within that suffix that have name $c$ must be within 
$(u_i)_s$, to the right of any letters with name $b$. 
Hence the distinguished letter $\disting{c}$ 
is within the prefix $\alpha_i$ of $w_i$. The last letter of $w_{i-1}$ is a 
letter with name $b$; we see that this is $\leftnbr{c}$.

If the rightmost letter of $u_i$ has name $b$, then since $b$ does not commute 
with $c$, $\beta_i=\emptyword$, and there are no letters with name $c$ in 
either$(u_i)_s$ or $(u_i)_q$. So in this case too, the distinguished letter 
with name $c$ is within the prefix $\alpha_i$ of $w_i$, and so again the 
letter with name $b$ that is the final letter of $w_{i-1}$ is $\leftnbr{c}$.

We note (for future reference) that in both cases just considered within 1(a), 
the letter $\rightnbr{c}$ is located at the lefthand end of $(u_i)_q$, and
must have name $b$ (since by Definition~\ref{def:P2G} its name must be different from that of $\f{u_i}$, which is $a$).

To summarise,
in Case 1(a) both $\leftnbr{c}$ and $\rightnbr{c}$ have name $b$,
and the last letter of $w_{i-1}$ is $\leftnbr{c}$.
 
\emph{Case 1(b)}: the word $u_i$ is P2G-critical of type $\{a,b\}$ and
$u_{i-1}$ is P2G-critical of type $\{b,c\}$. 

We have $\f{u_i} \in \{a,b\}$, and $\l{\tau(\widehat{u_{i-1}})} \in \{b,c\}$ so,
since by Definition~\ref{def:RRS} we have $\f{u_i}=\l{\tau(\widehat{u_{i-1}})}$,
the letter $\f{u_i}$ must have name $b$.  Then, since by definition $\alpha_i$
must commute with $\f{u_i}$, we have $\alpha_i=\emptyword$.
Further, by Lemma~\ref{lem:P2G_taufacts} (i) and (ii), $\l{\widehat{u_{i-1}}}$
has name $c$, and is equal to $\l{u_{i-1}}= \l{w_{i-1}}$. 

Now, just as in Case 1(a), the rightmost letter of $u_i$ could have name $a$
or $b$. 
Since $w_i$ is a suffix of $u_i$, if the distinguished letter $\disting{c}$ is within
$w_i$ then it must be within $u_i$; but the same arguments as in the Case 1(a) tell us that if we do encounter a distinguished letter with name $c$ moving leftwards from the right hand end of $u_i$, then it would have to be within the
prefix $(u_i)_p$.
However, the prefix $(u_i)_p$ of $u_i$
can contain no letter with name $c$ since $\alpha_i=\emptyword$.
It follows that as we move leftwards from the righthand end of $\vv_i$, we will
not find $\disting{c}$ within the suffix $w_i$ of $\vv_i$.
However, we have certainly found letters with name $b$ within $w_i$, and
the rightmost letter of $w_{i-1}$, immediately to the left of $w_i$ has name $c$, so that letter must be $\disting{c}$, 
We note also that the letter to the right of this distinguished letter within 
$v_i$ (and $w$) is $\f{w_i}$, and must have name either $a$ or $b$; that letter is $\rightnbr{c}$.


Suppose that, in the current Case 1(b), $\rightnbr{c}$ has name $b$.
Then we apply Lemma~\ref{lem:properties_optRRS}\,(ii) to see that
(since $\alpha_i$ is empty) the second letter of $u_i$ cannot have the same 
name as $\f{u_i}=b$. As a P2G word of type $\{a,b\}$ whose first letter has
name $b$, the word $u_i$ cannot have second letter with name $c$, so that second
letter must have name $a$. Now $u_i=b\beta_{i-1}w_i$, and, we know that the
first letter of $w_i$ is $\rightnbr{c}$, and so has name $b$.
Hence the second letter of $u_i$ must lie in $\beta_{i-1}$, and so
$\beta_{i-1} \neq \emptyword$. Since the final letter of $u_{i-1}$ has name $c$
(by Lemma~\ref{lem:P2G_taufacts}), 
we see that
$\beta_{i-1}$ is a non-trivial power of $a$ in this case. 
We already showed that, in the current Case 1(b), $\disting{c}$ is the final
letter of $w_{i-1}$, which is also the final letter of $u_{i-1}$.
So the P2G-critical word $u_{i-1}$, which has type $\{b,c\}$, has final letter
with name $c$, and since $\beta_{i-1}$
is a non-trivial power of $a$, it follows from Definition~\ref{def:P2G} that 
$(u_{i-1})_s$ is an $\{a,c\}$ word involving both $a$ and $c$; 
so in this case $\leftnbr{c}$ must have name $a$. 

From the arguments above,
we see that in Case 1(a) both $\leftnbr{c}$ and $\rightnbr{c}$ must have name $b$,
and in Case 1(b) $\rightnbr{c}$ could have name $a$ or $b$, but if
$\rightnbr{c}$ has name $b$ then $\leftnbr{c}$ has name $a$.
Hence Cases 1(a) and 1(b) can be distinguished by the fact that 
in Case 1(a) both $\leftnbr{c}$ and $\rightnbr{c}$ have name $b$,
and
in Case 1(b) at most one of $\leftnbr{c}$ and $\rightnbr{c}$ can have name $b$.

Further, we have proved that
in Case 1(b) 
the last letter of $w_{i-1}$ is $\disting{c}$.

\smallskip
{\bf Step $\mathbf{m+2-i}$, Case 2:} the word $u_i$ is P2G critical of type
$\{b,c\}$.

In this case we need to locate the distinguished letter $\disting{a}$,
defined to be
the first letter with name $a$ that we encounter within $w$ after at least one 
letter with name $b$ as we work leftward from the rightmost letter of $w_i$.
We also need to examine
the differently named neighbours of $\disting{a}$, known as $\leftnbr{a}$ and $\rightnbr{a}$.
In this case we have three subcases to consider. 
We consider Cases 2(a) and 2(c) together.

{\it Case 2(a) and 2(c)}: the word $u_i$ is P2G-critical of type $\{b,c\}$ and 
$u_{i-1}$ is critical of type $\{a,b\}$ or $\{a,b,c\}$.

The argument for Case 1(b), but  with the roles of $a$ and $c$ swapped, 
can be applied to the case 2(a) without change, and much of that argument also applies in Case 2(c).
So we can deduce in both cases that
$\l{u_{m-1}}$ has name $a$ and $\f{u_i} = \l{\tau(u_{m-1})}$ has name $b$,
hence $\alpha_i = \emptyword$. It follows that, in both cases,
the last letter of $w_{i-1}$ is $\disting{a}$, also that
at most one of $\leftnbr{a}$ and $\rightnbr{a}$ can have name $b$.

We observe that, in Case 2(c), the word $w_{i-1}$ has a proper suffix
$(u_{i-1})_r$ that is a P2G-critical of type $\{a,b\}$.
If this situation arose in Case 2(a) then the inductive process would
stop with $i-1 = 1$ and then that suffix would be equal to $w_1$.

So we continue to read leftwards through the word $w$, starting at the position
of the rightmost letter of $w_{i-1}$. If we find that $w_{i-1}$ does not
have $\s{u}$ as a suffix, for some P2G-critical word $u$ of type $\{a,b\}$,
then we cannot be in Case 2(c), so we must be in Case 2(a).

If there is such a suffix then, if we were in Case 2(a), we would 
have found the
complete decomposition defining the putative RRS. 
We can check whether this is the case by
testing whether applying $\tau$ to this suffix results in a critical word
$u_{i-1}$ of the required type. If not, then we must be in Case 2(c),
and we continue the induction with the next value of $m+2-i$.
If so, then we have found the complete RRS if it exists at all

{\it Case 2(b)}: the words $u_i$ and $u_{i-1}$ are both P2G critical of type
$\{b,c\}$.

The argument for Case 1(a), but  with the roles of $a$ and $c$ swapped can be
applied to this case.  It follows that in this case
the last letter of $w_{i-1}$ is $\leftnbr{a}$, also that
both of $\leftnbr{a}$ and $\rightnbr{a}$ have name $b$.  of $a$ and $c$.

We see that we have matched here the three subcases (b), (c), (a) of Case 2 to 
the three routes taken in Case 2 by the procedure.

\smallskip
{\bf Step $\mathbf{m+2-i}$, Case 3:} the word $u_i$ is critical of type
$\{a,b,c\}$.

The arguments for this case are very similar to those for Case 2,
and we have copied the argument with almost no modification.
But in this case we use the slightly modified definition for the distinguished 
letter that is also used in the procedure in this case:
we define the distinguished letter $\disting{a}$ to be 
the first letter with name $a$ that is encountered within $w$ after at least one 
letter with name $b$ followed by at least one letter $c$, followed by at 
least one more letter $b$, while moving leftward from the rightmost letter of
$\vv_i$.
The differently named neighbours $\leftnbr{a}$ and $\rightnbr{a}$ are
defined relative to $\disting{a}$ as in the other cases.
Again we have to consider three cases, (a), (b) and (c), and
again we consider Subcases (a) and (c) together.

We first justify the modified definition of $\disting{a}$
that is used in all three subcases.
Reading leftwards through $\vv_i$ starting at the letter with name
$a$ at its right hand end, we first locate the first
letter with name $b$. This must lie in $(u_i)_r$ and it cannot lie in
$((u_i)_r)_s$, which is an $\{a,c\}$-word, so it must lie in $((u_i)_r)_q$.
Then (still reading leftwards) we locate the next letter with name $c$,
which could lie either in $((u_i)_r)_p$ or in $(u_i)_q$. After that we locate
the next letter with name $b$. This letter cannot lie in the
$\{a,c\}$-word ${((u_i)_r)}_p$, so it must must lie in $(u_i)_q$.
(It can be checked that our assumption $n \ge 5$ ensures that there is at least
one letter with name $b$ in $(u_i)_q$ that will be found using this process.)
Finally, we locate the next letter with name $a$, which we define to
be $\disting{a}$.

\emph{Cases 3(a) and 3(c)}: the word $u_i$ is critical of type $\{a,b,c\}$ and 
$u_{i-1}$ is critical of type $\{a,b\}$ or $\{a,b,c\}$.

The brief argument described in Cases 2(a) and 2(c) above
applies without modification in these two cases too,
giving the same conclusions, provided that the modified definitions of
$\disting{a}$, $\leftnbr{a}$ and $\rightnbr{a}$ are used.

\emph{Case 3(b)}: $u_i$ is critical of type $\{a,b,c\}$ and $u_{i-1}$ is
P2G-critical of type $\{b,c\}$.

Just as in Case 2(b), we see that
the last letter of $w_{i-1}$ is $\leftnbr{a}$, also that
both of $\leftnbr{a}$ and $\rightnbr{a}$ have name $b$.
of $a$ and $c$.

This argument completes the correctness verification of step $m+2-i$ in the
procedure in Cases 2 and 3.
Hence we have proved that if $wx \not \in W$, 
then the procedure will find the selected optimal RRS.
\end{proof}

\section{Proof of the main theorem}
\label{sec:proofs}

We recall Definition~\ref{def:W} of $W$ as the set  of words admitting no RRS.
Our main theorem will be an immediate consequence of the following result,
which specifies the rewrite system $\cR$.
\begin{theorem}
\label{thm:main_details}
Let $G$ be the Artin group with presentation
$$G= \langle a,b,c \mid aba=bab, ac=ca, {}_{n}(b,c) = {}_{n}(c,b) \rangle,$$
for $n \geq 5$, and let $W$ be as defined above.
Then the words in $W$ are precisely the words that are geodesics in $G$.

There is a process that, given an input word $w:= x_1\cdots x_n$,
runs in quadratic time to find geodesic representatives $w_0,\cdots, w_n$ in
$W$ of successive prefixes of $w$, in each case by applying an RRS of
$\tau$-moves to the word $w_{i-1}x_i$, and hence finding a geodesic
representative $w_n$ for $w$, and thereby solving the word problem in $G$.

Furthermore, if $w,w' \in W$ represent the same element of $G$ then there is
a sequence of commutations and $\tau$-moves applied to critical 2-generator
subwords that transforms $w$ to $w'$. 
\end{theorem}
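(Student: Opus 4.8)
The plan is to establish Theorem~\ref{thm:main_details} in three logically separable pieces, matching its three assertions: (1) $W$ equals the set of geodesics; (2) the iterative prefix-reduction process runs in quadratic time and solves the word problem; and (3) any two words in $W$ representing the same element are connected by commutations and $2$-generator $\tau$-moves. I would prove these essentially in the order stated, because the later parts lean on the earlier ones.

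For the first assertion, one inclusion is already free: every word admitting an RRS is reduced in length by applying it (the RRS terminates in a free cancellation $\f{\gamma}^{-1}\f{\gamma}$, and each $\tau$-move is length-preserving), so any geodesic must lie in $W$. The substantive direction is that every word in $W$ is geodesic, i.e. that a \emph{non}-geodesic word always admits an RRS. This is exactly the content I expect the technical Propositions~\ref{lem:6.1}--\ref{lem:6.4} (announced but not shown in the excerpt) to supply, presumably by a van~Kampen / diagram argument or an induction on length adapted from \cite{BCMW}: given a non-geodesic $w$, there is a shorter equivalent word, and one must manufacture from this a \emph{rightward} reducing sequence — the crux being that the new $3$-generator critical words of type $\{a,b,c\}$ (Definition~\ref{def:3gen_critical}) are precisely what repair the failure of rightwardness exhibited in Example~\ref{eg:tricky_w_in_G}.

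For the second assertion I would argue by induction on the length of the input $w=x_1\cdots x_n$. Suppose inductively that $w_{i-1}\in W$ is a geodesic for the prefix $x_1\cdots x_{i-1}$. Apply Procedure~\ref{proc:unique_optRRS} to the pair $(w_{i-1},x_i)$. By Proposition~\ref{prop:unique_optRRS} this runs in linear time and either returns \fail, certifying $w_{i-1}x_i\in W$ (so $w_i:=w_{i-1}x_i$ is geodesic by the first assertion), or returns the unique optimal RRS, whose application produces a word $w_i'$ ending in $\f{\gamma}^{-1}\f{\gamma}$; after the single free cancellation we obtain $w_i$ with $|w_i|=|w_{i-1}x_i|-2$. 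Since each $\tau$-move and the free reduction preserve the group element, $w_i=_G x_1\cdots x_i$. After reduction $w_i$ admits no further RRS (this needs a short argument, that the resulting word is again in $W$; I would either invoke optimality/uniqueness to show no new RRS is created, or simply re-run the procedure, noting that total work remains linear per prefix), so $w_i\in W$ is geodesic. Summing the $n$ linear-time steps gives the quadratic bound, and $w_n$ solves the word problem since $w=_G 1 \iff w_n=\emptyword$.

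The third assertion is where I would work hardest, and I expect it to be the main obstacle. The natural strategy: since $W$ is exactly the geodesics and the procedure yields, for each group element, outputs related by the RRS-moves, I would show the set of $2$-generator $\tau$-moves-and-commutations connects all geodesics by reducing to the local confluence already available in each rank-$2$ parabolic. Concretely, given $w,w'\in W$ with $w=_G w'$, a $\tau$-move on a $\{a,b,c\}$-critical word \emph{is not} itself a $2$-generator move, but by the derivation displayed before Definition~\ref{def:3gen_tau} it factors into commutations and $\tau$-moves on the $2$-generator subwords $\widehat{u_r}$ and $\widehat{u^\#}$ — and Lemma~\ref{lem:2gengeo} guarantees any two geodesics of a rank-$2$ subgroup are $2$-generator-$\tau$-connected. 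The delicate point is that this factorization moves \emph{right-to-left} and passes through non-geodesic intermediate words, so I cannot directly assemble a path \emph{within} the geodesics. I would therefore route the argument through a common normal form: show every geodesic is connected by these elementary moves to a canonical (e.g. ShortLex) representative, so that $w$ and $w'$ both reach the same canonical word and hence each other by concatenating one path with the reverse of the other (legitimate since commutations and $2$-generator $\tau$-moves are involutive). Making the ``connected to canonical form'' claim precise for the $3$-generator obstructions, while keeping every intermediate step expressible via the permitted $2$-generator moves, is the real work, and I would expect to lean on the detailed case analysis of Propositions~\ref{lem:6.1}--\ref{lem:6.4} here as well.
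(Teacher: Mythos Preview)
Your plan misidentifies the role of Propositions~\ref{lem:6.1}--\ref{lem:6.4} and, as a result, is missing the central mechanism of the proof. Those propositions do \emph{not} establish directly that a non-geodesic word admits an RRS, whether by van~Kampen diagrams, induction on length, or otherwise. What they actually do is: (\ref{lem:6.1},\,\ref{lem:6.2}) show that if $w$ admits an (optimal) RRS and $w'$ is obtained from $w$ by a single commutation or $2$-generator $\tau$-move, then $w'$ also admits an (optimal) RRS, and the resulting reduced words are themselves related by such moves; and (\ref{lem:6.3},\,\ref{lem:6.4}) verify that certain identities hold among $\sim$-classes. In particular, part~(i) of \ref{lem:6.1} and \ref{lem:6.2} gives that $W$ is closed under $\sim$, which you need and do not have.

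The idea you are missing is this: define $\Omega$ to be the set of $\sim$-classes in $W$, and construct a \emph{right action of $G$ on $\Omega$}. For $x\in A$ and $w\in W$, set $[w]x:=[wx]$ if $wx\in W$, and otherwise set $[w]x:=[v]$, where $vx^{-1}x$ is the output of the optimal RRS on $wx$; the fact that the RRS leaves the final letter $x$ fixed means $w\sim vx^{-1}$, so $\sim$-closure of $W$ gives $vx^{-1}\in W$ and hence $v\in W$. Propositions~\ref{lem:6.1}(ii) and~\ref{lem:6.2}(ii) make this well defined on classes; one checks $([w]x)x^{-1}=[w]$, so the free group acts; and Propositions~\ref{lem:6.3},~\ref{lem:6.4} show the Artin relations hold, so $G$ acts. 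Now assertions~(1) and~(3) fall out \emph{together}: if $w,w'\in W$ represent the same $g\in G$ then $[w]=[\emptyword]\cdot g=[w']$, giving~(3) immediately; and since $\sim$ preserves length, every word in $W$ representing $g$ has the same length, hence is geodesic, giving~(1). Your ordering (prove~(1) first, then~(3) via a ShortLex normal form) is not how the paper proceeds and would require independent arguments you have not supplied.

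This gap also propagates into your treatment of~(2). You correctly flag that after applying the RRS to $w_{i-1}x_i$ one must know the reduced word lies in $W$, but your suggested fixes do not work: optimality/uniqueness says nothing about the \emph{output} admitting no RRS, and ``re-running the procedure'' is circular since Procedure~\ref{proc:unique_optRRS} presupposes its input already has $\p{w}\in W$. The paper's route closes this cleanly via the $\sim$-closure of $W$ just described.
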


This section contains the proof of Theorem~\ref{thm:main_details}.
We describe the steps of the proof now. The proof uses the results of Section~\ref{sec:RRS} together with details that are verified in
Propositions~\ref{lem:6.1}--\ref{lem:6.4} that follow. 

We need to prove that a word $w$ is in the set $W$ of words admitting no RRS
(see Definition~\ref{def:RRS}) if and only if it is geodesic.
We observed as we defined $W$ that it must contain
all geodesic words. We recall that if a word $w$ admits an RRS then it admits
an optimal RRS which, by Proposition~\ref{prop:unique_optRRS}, is unique for
words $w$ with $\p{w} \in W$.

In order to establish the first statement of Theorem~\ref{thm:main_details},
we need to show that a non-geodesic word must admit an RRS.  To do that, we 
define a binary relation $\sim$ on $A^*$ by $w \sim w'$ if and only if $w'$ can
be obtained from $w$ by carrying out a sequence of commutations and $\tau$-moves
on critical 2-generator subwords. It is clear that $\sim$ is an equivalence 
relation, and Propositions~\ref{lem:6.1}\,(i) and~\ref{lem:6.2}\,(i) below
ensure that it restricts to an equivalence relation on $W$. For $w \in W$,
we denote the equivalence class of $w$ by $[w]$ and we denote the set of all
equivalence classes within $W$ by $\Omega$.

We aim to prove that $\Omega$ corresponds bijectively to $G$, where $g \in G$
corresponds to $[w]$ for any geodesic representative $w$ of $g$. Our strategy
is to define a right action of $G$ on $\Omega$. This action will have the
property that, whenever $x \in A$ and $w,wx \in W$, we have $[w]x=[wx]$.
We shall now complete the proof of Theorem~\ref{thm:main_details}, and
provide the details of the action subsequently.

Let $w,w'\in W$ be two representatives of the same element of $G$.
Then, since all prefixes of $w$ and $w'$ lie in $W$, we see that
the elements $[w]$, $[w']$ of $\Omega$ are equal to the images of
$[\emptyword]$ under the action of the elements of $G$ represented by $w,w'$.
Since $w,w'$ represent the same element, it follows that $[w]=[w']$.
This establishes the final assertion of Theorem~\ref{thm:main_details}.
Conversely, the definition of $\sim$ ensures that any two words related by
$\sim$ must represent the same element.  Hence we have a correspondence
between the elements of $G$ and the equivalence classes of $\sim$ on $W$.

But now if $w,w' \in W$ represent the same element, with $w'$ non-geodesic 
but $w$ non-geodesic then, since the two words represent the same element we
must have $[w]=[w']$, so $w \sim w'$. But the definition of $\sim$ ensures
two equivalent words have the same length, which is a contradiction.
It follows that every word in $W$ is geodesic.

The proof of the first statement within Theorem~\ref{thm:main_details} is now
complete. So now suppose that $w=x_1\cdots x_n$ over $A$ is input.
Setting $w_0 = \emptyword$, for each $i=1,\ldots, n$ we apply
Proposition~\ref{prop:unique_optRRS} to find in time 
linear in $i$ a word $w_i \in W$ that represents $w_{i-1}x_i$. Hence in time
quadratic in $n$, we find $w_n \in W$ that represents $W$. The first statement
within Theorem~\ref{thm:main_details} ensures that $w_n$ is geodesic,
and hence we can decide whether $w=_G 1$. This completes the proof of
Theorem~\ref{thm:main_details}.

We turn now to the definition of the right action of $G$ on $\Omega$.
We start by specifying the action of an element $x \in A$ on the equivalence
class $[w]$ of a word $w \in W$. To show that this is well-defined,
it is sufficient to verify that $[w]x = [w']x$ for words $w' \in W$ for
which a single commutation or $\tau$-move on a critical 2-generator subword
transforms $w$ to $w'$.

If $wx \in W$ then, since $wx \sim w'x$, we have $w'x \in W$
and we define $[w]x := [wx] (= [w'x])$.  

If $wx \not \in W$ then $w'x \not\in W$. Then by
Proposition~\ref{prop:unique_optRRS} $wx$ has a unique optimal RRS, which
transforms $wx$ to a word of the form $vx^{-1}x$ where,
by Propositions~\ref{lem:6.1}\,(i) and~\ref{lem:6.2}\,(i),
$vx^{-1} \in W$ and hence $v \in  W$. Furthermore, $w'x$ has a unique optimal
RRS which, by Propositions~\ref{lem:6.1}\,(ii)\,(b)
and~\ref{lem:6.2}\,(ii)\,(b) is the RRS $V$ specified there (note that the
words $wx$ and $w'x$ here are the words referred to as $w$ and $w'$ in
Propositions~\ref{lem:6.1} and~\ref{lem:6.2}). Then, by
Propositions~\ref{lem:6.1}\,(ii)\,(c) and~\ref{lem:6.2}\,(ii)\,(c).
$V$ transforms $w'x$ to a word of the form $v'x^{-1}x$ with $v \sim v'$.
In this case we define $[w]x := [v] (= [w']x = [v'])$.

In either case, we have a well defined image of $[w]$ under $x$.

We need to consider the effect of applying first $x$ and then $x^{-1}$ to $[w]$.
We consider separately the two cases $wx \in W$ and $wx \not \in W$.

If $wx \in W$, then $wxx^{-1} \not \in W$, and indeed
$wxx^{-1}$ admits an RRS of length $0$ that transforms it to $w$.
Hence by definition $[wx]x^{-1} = [w]$, and so in this case
$([w]x)x^{-1} = [w]$.
If $wx \not \in W$ then, as we saw above, we have $w \sim vx^{-1}$ for
a word $v \in W$ of length $|w|-1$, and we defined $[w]x = v$.
So again we have $([w]x)x^{-1} = [vx^{-1}] = [w]$.
Hence the maps $[w]\mapsto^x [w]x$ extend to an action of the free group of rank
three on $W$, for which $[w]u = ([w]u')x$ for any word $u=u'x$ over $A$.

Our next step is to verify that, for each $w \in W$, we have
$[w]ac=[w]ca$, $[w]aba=[w]bab$ and $[w]\cdot {}_n(b,c)=[w]\cdot {}_n(c,b)$.
These results are proved as Propositions~\ref{lem:6.3},~\ref{lem:6.4} below;
those propositions imply that our action of the free group on $\Omega$ induces
to an action of $G$ as required.

We provide proofs of the Propositions~\ref{lem:6.1}--\ref{lem:6.4} below. These
correspond to Lemmas (6.1)--(6.4) of \cite{BCMW}, but in our case (where $G$ has
just 3 generators) the corresponding results are a little easier to prove.
Nonetheless our arguments were guided by those of \cite{BCMW} and we have kept
our notation as close to the notation of that article as possible, in order to
aid comparison.

\begin{proposition}\label{lem:6.1}
Suppose that the word $w \in A^*$ admits an optimal RRS
$U=u_1,\ldots,u_m,u_{m+1}$
with decomposition $w=\mu w_1 \cdots w_mw_{m+1}\gamma$.
Suppose also that $w$ has a subword $\zeta$ equal to $ac$ or $ca$, and let $w'$
be the word obtained from $w$ by replacing $\zeta$ by its reverse.
Then:
\begin{mylist}
\item[(i)] The word $w'$ admits an RRS $V$ with
decomposition $w' = \nu y_1 \cdots y_{m+1}\eta$.
\item[(ii)] Suppose that $\p{w} \in W$, so that $\gamma$ is a single letter $\xx$, and that $\zeta$ is within $\p{w}$.
Then the RRS $V$ can be chosen to ensure that:
  \begin{mylist}
  \item[(a)] $\eta=\gamma=\xx$;
	  \quad{(b)}\quad $V$ is optimal;
  \item[(c)] if $v,v'$ are the free reductions of the words 
	  ($v\xx^{-1}\xx$ and  $v'\xx^{-1}\xx$, respectively)
	  to which $w,w'$ are transformed by $U,V$,
               then $v$ can be transformed to
               $v'$ using only commutations of $a$ and $c$.
   \end{mylist}
\end{mylist}
\end{proposition}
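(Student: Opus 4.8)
The plan is to use the fact that $ac=_G ca$, so that passing from $w$ to $w'$ is a single commutation and in particular $w\sim w'$; the work is to transport the decomposition $w=\mu w_1\cdots w_mw_{m+1}\gamma$ across this commutation. The whole argument is a case analysis on the position of the two-letter subword $\zeta$ relative to this decomposition. The guiding observation is that in every flavour of critical word the generator commuting with the chosen pseudo-generator ($c$ for a P2G word of type $\{a,b\}$, $a$ for one of type $\{b,c\}$) occurs only in tightly controlled places: inside the $\{a,c\}$-word blocks $(u_i)_p$ and $(u_i)_s$ of Definition~\ref{def:P2G}, and as the powers $\alpha_i,\beta_i$, while the block $(u_i)_q$ is free of it. Consequently, whenever $\zeta$ lies strictly inside such an $\{a,c\}$-region, swapping $a$ and $c$ leaves both the reduced word $\widehat{u_i}$ and the powers $\alpha_i,\beta_i$ unchanged, hence leaves $\tau(u_i)$ unchanged, and the same sequence $U$ serves verbatim as $V$, with $\nu,\eta$ and the $y_j$ being the correspondingly relabelled blocks.

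First I would dispose of the benign cases: $\zeta$ entirely inside $\mu$, inside $\gamma$ away from the cancelling letter $\f{\gamma}$, or strictly inside an $\{a,c\}$-region of a single block $w_i$ or of $w_{m+1}$; in all of these the decomposition transfers with only local relabelling, as above. The remaining cases are those in which $\zeta$ straddles a boundary $w_{i-1}\,|\,w_i$ (or $w_m\,|\,w_{m+1}$, or $w_{m+1}\,|\,\gamma$) or sits at the extreme start or end of some $u_i$, so that the swap carries a letter across the edge of a critical word. For a boundary straddle the remedy is to shift the cut by one position and reassign the transported letter to the neighbouring block, then verify using Definition~\ref{def:P2G} together with Lemmas~\ref{lem:RRSdetails} and~\ref{lem:P2G_taufacts} that the re-cut blocks still head critical words of admissible types and that the defining identities of Definition~\ref{def:RRS} persist.

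The genuinely delicate situations, which I expect to be the main obstacle, are those in which the commutation changes the criticality \emph{type} of a block: for instance when the swap exposes the leading $a$ of a type-$\{a,b\}$ word by moving a $c$ out of $\alpha_i$, or when it acts at the $(u_i)_q\,|\,(u_i)_r$ junction of a type-$\{a,b,c\}$ word (where a trailing $c$ of the $\{b,c\}$-part meets the leading $a$ of the $\{a,b\}$-part). Here I would show that the transported letter can always be reallocated so that the pair of types at the affected boundary is one permitted by Lemma~\ref{lem:RRSdetails}\,(iv) — using, where a type-$\{a,b,c\}$ word arises, that $\widehat{(u_i)_r}$ remains geodesic and hence critical by Lemma~\ref{lem:abcrit1}. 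The asymmetry between $\alpha$ and $\beta$ and the prohibition of type $\{a,b\}$ immediately after type $\{a,b,c\}$ are exactly what keep the configurations finite and dictate each reassignment.

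For part (ii), the hypotheses $\p{w}\in W$ and $\zeta\subseteq\p{w}$ force $\gamma=\xx$ to be a single letter untouched by the swap, so every case above has $\zeta$ inside $\mu w_1\cdots w_{m+1}$ and yields $\eta=\gamma=\xx$ at once, giving (a). For optimality (b) I would verify the three conditions of Definition~\ref{def:optRRS} directly for the $V$ just constructed, noting that since $\p{w'}\in W$ (itself a consequence of part~(i) applied to the reverse commutation) the optimal RRS for $w'$ is unique by Proposition~\ref{prop:unique_optRRS}, so it suffices to exhibit one optimal RRS. For (c) I would track the swapped $a/c$ pair through the sequences of $\tau$-moves prescribed by $U$ and $V$: because each $\tau$-move touches the commuting generator only through the relabelling $\delta$ and the shuffling of the powers $\alpha,\beta$, all of which merely permute $a$'s and $c$'s among themselves, the outputs $v$ and $v'$ differ only by the position of a commuting $a/c$ pair, so $v$ is carried to $v'$ by commutations of $a$ and $c$ alone, as required.
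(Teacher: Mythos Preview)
Your approach is essentially the same as the paper's: a case analysis on the position of $\zeta$ relative to the decomposition $\mu w_1\cdots w_{m+1}\gamma$, with letters shifted across block boundaries as needed. Two remarks. First, your anticipated ``delicate situations'' where the criticality type of a block changes do not in fact arise here: an $ac\leftrightarrow ca$ swap is always absorbed into the $\{a,c\}$-word region of the relevant $(u_i)_p$ or $(u_i)_s$ (or pushed into $\mu$ or the neighbouring $w_{i\pm1}$), leaving $\widehat{u_i}$, $\alpha_i$, $\beta_i$ and hence $\tau(u_i)$ unchanged and the type intact; so the analysis is simpler than you expect, and indeed in every case one finds $v'=v$ exactly, not merely up to commutations. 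Second, for optimality condition~(i) in the cases where $y_1$ and $w_1$ begin at the same position, the paper avoids direct verification by a short back-and-forth argument: a putative RRS for $w'$ starting further right would, via part~(i) applied to the reverse swap, yield an RRS for $w$ that by optimality of $U$ must fall under the boundary-of-$\mu$ case, forcing the current case to be its inverse, a contradiction.
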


\begin{proof} 
We observe first that, if the hypotheses of part (ii) of the proposition hold,
then the fact that $\zeta$ is within $\p{w}$ implies that $\l{w'}=\l{w}=\xx$. 
Also, we can obtain $\p{w}$ from $\p{w'}$ by a single commutation move so, if
we assume part (i) of the proposition, then $\p{w'} \not\in W$ would imply
$\p{w} \not\in W$, contrary to assumption. 
So, assuming part (i), we have $\p{w'} \in W$ and hence also
$\eta=\l{w'}=\l{w}=\gamma=\xx$.
Thus we need only prove parts (i), (ii)\,(b) and (ii)\,(c).

We write $\zeta^R$ for the reverse of $\zeta$.
We derive the RRS $V$ of part (i) and associated decomposition for $w'$ by
examining the effect that replacement of the subword $\zeta$ of $w$ by $\zeta^R$ 
has on the factorisation $\mu w_1\cdots w_{m+1}\gamma$.

We split the proof into a list of cases that depend on
where the subword $\zeta$ occurs in the decomposition of $w$.
Note that by the start  or beginning of a subword of $w$ or $w'$ we mean the location of its leftmost letter, and by the end we mean the location of its rightmost letter.

\smallskip
{\bf Case 1:} $\zeta$ is a subword of $\mu$ or of $\s{\gamma}$.

\begin{addmargin}[1em]{0em}
$w'$ has $U$ as an optimal RRS associated with a decomposition of $w'$ of the
form $\nu w_1\cdots w_{m+1}\eta$, and the results are straightforward.
\end{addmargin}

\smallskip
{\bf Case 2:} $m>0$, and $\zeta$ overlaps the right and end of $\mu$ and the
left hand end of $w_1$.

\begin{addmargin}[1em]{0em}
Recall that (by definition) $w_1=u_1$.
We define $y_1:=\f{u_1}\f{\zeta}\s{u_1} $, $v_1 := y_1$,
$y_i:= w_i, v_i := u_i$ for $i>1$, $\nu := \p{\mu}$, $\eta := \gamma$.
It is clear that $\nu y_1\cdots y_{m+1}\eta = w'$.
Then it follows from Definitions~\ref{def:P2G} and ~\ref{def:3gen_critical}
that $v_1$ is critical of the same type as $u_1$, with $\f{\zeta}=\l{\mu}$
forming part of $\alpha(v_1)$,
and by definition we have $\tau(y_1)= \f{\zeta}\tau(u_1)=\l{\mu}\tau(u_1)$.  
So $V$ is an RRS.

In order to prove that $V$ is an optimal RRS for $w'$ given the hypotheses for
part (ii), it remains
to verify those conditions of Definition~\ref{def:optRRS} for optimality that
refer to $y_1=v_1$; we just need to check conditions (iii) and (i).

Since $v_1$ has the same critical type as $u_1$ and
Definition~\ref{def:optRRS}\,(iii) holds for $U$, it must also hold for $V$.

Suppose that  Definition~\ref{def:optRRS}\,(i)
fails for $V$. In that case, Procedure~\ref{proc:unique_optRRS} ensures that 
the unique optimal RRS for $w'$ is associated with a decomposition
$\nu'y'_1y_2\cdots y_{m+1}\eta$ of $w'$, with $y'_1$ a proper suffix of $y_1$.
We see from Procedure~\ref{proc:unique_optRRS} that the criticality type
of $u_1$ is determined either by $(u_1)_r$ or, in the case when this type
could be $\{a,b\}$ or $\{a,b,c\}$, by whether $\vv_1$ has a
P2G-critical suffix of type $\{a,b\}$ that forms the first term of an
RRS for $w$. These properties are not affected by substituting $\zeta^R$ for
$\zeta$, so the criticality type of $y_1'$ is the same as that of $u_1$.
But since $u_1$ has no proper critical suffixes of the same type, this
could only happen if $|y_1'| = |u_1|-1$, and that would give
$\f{y_1'} = \f{\zeta}$, which is not a critical letter of $u_1$.
This contradiction completes the verification of the optimality of $V$.

Since application of $V$ to $w'$ yields the same result as application of $U$
to $w$, we have $v=u$ in part (ii)\,(d). 
\end{addmargin}

\smallskip
{\bf Case 3:} 
$m>0$, and the subword $\zeta$ starts at the beginning of $w_1$.

\begin{addmargin}[1em]{0em}
Let $w'_1$ be the suffix of $w_1$ of length $|w_1|-2$
(so that $w_1=\zeta w_1'$),  and define $y_1 := \f{\zeta}w_1'$,
$y_i:=w_i$ for $i>1$,
$\nu = \mu\l{\zeta}$, $\eta=\gamma$, $v_1 := y_1$ and $v_i:= u_i$ for $i>1$.
Since $w_1=u_1$ is critical, and its second letter commutes with its
first, it follows from Definitions~\ref{def:P2G} and ~\ref{def:3gen_critical}
that $v_1$ is critical of the same type as $u_1$.
Now, $\l{\zeta}=\l{\nu}$ forms part of $\alpha(u_1)$ that is not
within $\alpha(v_1)$,
so by definition we have $\tau(u_1)= \f{\zeta}\tau(v_1)$, and $V$ is an RRS.

Just as in Case 2, in order to prove that $V$ is an optimal RRS for $w'$ when
the hypotheses of (ii) hold, we need only to verify 
those conditions of Definition~\ref{def:optRRS} that refer to $y_1=v_1$;
we just need to check conditions (iii) and (i). Condition (iii) is again
clear, and the verification of Condition (i) is similar but more straightforward
to that in Case 2. And as in Case 2 we have $v=u$ in part (ii)\,(d).

We observe that we have the situation of Case 2, but with the pairs $w$, $w'$,
$U$, $V$, and $\zeta$, $\zeta^R$ interchanged.
\end{addmargin}

In each of the remaining cases, we shall see that the decompositions $U$ and
$V$ of $w$ and $w'$ start in the same position in the words $w$, $w'$. That
is, the left hand ends of $w_1$ and $v_1$ are in the same position.

This means that Condition (i) of the optimality of $V$ in part (ii)\,(c) can be
established in all
remaining cases by the following argument. If this condition
failed for $V$, then there would be another RRS $V'$ starting further to the
right in $w'$.  But then, by the optimality of $U$, applying part (i) of the
current proposition to $V'$ with the reversed substitution $\zeta^R \to \zeta$
on $w'$ would result in a decomposition $U'$ of $w$ that started further to the
left than $V'$, so it would be an instance of Case 2 of this proposition. But in
Case 2 we proved the optimality of the resulting RRS for $w'$, so by the
uniqueness of the optimal RRS we would have $U = U'$. But then our current
substitution $\zeta \to \zeta^R$ of $w$ would be an instance of Case 3 of this
proposition, which it is not.

\smallskip
{\bf Case 4:} $\zeta$ is contained within $w_i$ for some $i \le m$, but $\zeta$
does not include the first letter of $w_1$ (when $i=1$).

{\it Case 4(a)}: $\zeta$ does not contain the last letter of $w_i$.

\begin{addmargin}[1em]{0em}
Either one of the letters of $\zeta$ lies in $\alpha(u_i)$ or in $\beta(u_i)$,
or $u_i$ is of type $\{a,b,c\}$ and one of the letters of $\zeta$ is
$a^{\pm 1}$ in the subword $(u_i)_r$ (in the notation of
Definition~\ref{def:3gen_critical}) and the other is an adjacent letter
$c^{\pm 1}$ (which could be $\l{(u_i)_q}$ or $\f{\alpha(u_i)_r)}$).
We find a decomposition for $w'$ with $\nu:=\mu$, $\eta:=\gamma$,
$y_j:=w_j$ and $v_j:=u_j$ for $j \neq i$ and $y_i$ formed from $w_i$ by
replacing its subword $\zeta$ by $\zeta^R$.
That same commutation move transforms $u_i$ to a critical word $v_i$,
and since $\tau(u_i)=\tau(v_i)$, we have a corresponding RRS for $w'$ with
$v_j=u_j$ for $j\neq i$.

Conditions (ii) and (iii) of Definition~\ref{def:optRRS} for optimality of $V$
in part (ii)\,(c) follow immediately from the optimality of $U$,
and $v=u$ in part (ii)\,(d).
\end{addmargin}

\smallskip
{\it Case 4(b)}: $\zeta$ ends at the end of $w_i$. 

\begin{addmargin}[1em]{0em}
Then $\f{\zeta}$ is in $\beta(u_i)$, so $i < m$ and
$\alpha(u_{i+1})=\emptyword$. We define a decomposition for $w'$
with $\nu:=\mu$, $\eta:=\gamma$, $y_j:=w_j$ and $v_j:=u_j$ for $j \neq i,i+1$, 
but $y_i := w'_i\l{\zeta}$, where $w'_i$ is the prefix of $w_i$ of length
$|w_i|-2$, and $y_{i+1}:= \f{\zeta}w_{i+1}$.
Then $v_i := u'_i\l{\zeta}$, where $u'_i$ is the prefix of $u_i$ of length
$|u_i|-2$, is critical, because it is derived from $u_i$ by deleting a letter
of $\beta(u_i)$, and $\tau(v_i)\f{\zeta}=\tau(u_i)$.
We find that $v_{i+1} := u_{i+1}$ for $j \ne i$, and
we can check that the sequence $V=v_1,\ldots,v_{m+1}$
satisfies the conditions of Definition~\ref{def:RRS} associated with the
decomposition we have described.

Again Conditions (ii) and (iii) of the optimality of $V$ in part (ii)\,(c)
follow easily from the optimality of $U$, and $v=u$ in part (ii)\,(d).  
\end{addmargin}

\smallskip
{\bf Case 5:} $\zeta$ overlaps the end of $w_i$ and the beginning of $w_{i+1}$ 
for some $i < m$. 

\begin{addmargin}[1em]{0em}
(This reverses the transformation in Case 4(b).)
We define a decomposition of $w'$ with $\nu:=\mu$, $\eta := \gamma$,
$y_j := w_j$ and $v_j := u_j$ for $j \ne i,i+1$,
$y_i := \p{w_i}\zeta^R$ and $y_{i+1}:= \s{w_{i+1}}$.
Then $v_i := \p{u_i}\zeta^R$ is critical with $\tau(v_i) = \tau(u_i)\f{\zeta}$,
and $v_{i+1}:=u_{i+1}$.
We can check that the sequence $V=v_1,\ldots,v_{m+1}$
satisfies the conditions of Definition~\ref{def:RRS} associated with the
decomposition we have described.  Part (ii) is again straightforward.
\end{addmargin}

\smallskip
{\bf Case 6:} the end of $\zeta$ is within $w_{m+1}$. 

\begin{addmargin}[1em]{0em}
If $m>0$, then the word $w_{m+1}$ is a power of a letter that does not commute
with $\l{w_m}$, so this case cannot occur.

If $m=0$ then $w_{m+1}=w_1=\f{\gamma}^{-1} \s{w_1}$ where $\s{w_1}$ is a
(possibly empty) power of a letter.
So the leftmost letter $\f{\gamma}^{-1}$ of $w_1$ is in $\zeta$.

If this is the second letter of $\zeta$, then the decomposition of $w'$
associated with $V$ is $\p{\mu}\zeta^R \s{w_{1}} \gamma$ with
$v_1 = y_1:= \zeta^R \s{w_{1}}$.
Otherwise $\zeta$ consists of the first two letters of $w_1$ and the
associated decomposition of $w'$ is $\mu \l{\zeta} \f{\zeta} w_1' \gamma$
with $v_1=y_1 := \f{\zeta} w_1'$, where $w_1'$ is the suffix of $w_1$ of
length $|w_1|-2$.

Part (ii) is straightforward in both cases, with $v=u$ in part (ii)\,(d).
\end{addmargin}

In the remaining two cases $\zeta$ intersects $\gamma$ non-trivially, so the
hypotheses of part (ii) of the proposition do not hold.

\smallskip
{\bf Case 7:} the end of $\zeta$ is at the beginning of $\gamma$.

\begin{addmargin}[1em]{0em}
If $m>0$ then $\l{w_m}$ does not commute with $\f{\gamma}$ so $\f{\zeta}$ is
in $w_{m+1}$, and we find a decomposition of $w'$ with $\nu:= \mu$,
$y_i:= w_i$ and $v_i:=u_i$ for $i \leq m$, $y_{m+1} := \p{w_{m+1}}$
and $v_{m+1} := \p{u_{m+1}}$, and $\eta := \zeta^R\s{\gamma}$.
The case $m=0$ is also straightforward,
\end{addmargin}

\smallskip
{\bf Case 8:} the beginnings of $\zeta$ and $\gamma$ coincide.

\begin{addmargin}[1em]{0em}
(This reverses the transformation in Case 7.) If $m>0$ we find a decomposition
of $w'$ with $\nu:= \mu$, $y_i:= w_i$ and $v_i:=u_i$ for $i \leq m$,
$y_{m+1} := w_{m+1}\l{\zeta}$ and $v_{m+1} := u_{m+1}\l{\zeta}$, and
$\eta := \f{\zeta}\gamma'$, where $\gamma'$ is the suffix of $\gamma$ of
length $|\gamma|-2$.
The case $m=0$ is again straightforward,
\end{addmargin}
\end{proof}

\begin{proposition}\label{lem:6.2}
Suppose that the word $w \in A^*$ admits an optimal RRS
$U=u_1,\ldots,u_m,u_{m+1}$
with decomposition $w=\mu w_1 \cdots w_mw_{m+1}\gamma$.
Suppose also that $w$ has a
$2$-generated critical subword $\zeta$ in the generators $s$ and $t$ with
$\{s,t\}$ equal to $\{a,b\}$ or $\{b,c\}$. Let $w'$ be the word obtained from
$w$ by replacing the subword $\zeta$ by $\tau(\zeta)$.
Then:
\begin{mylist}
\item[(i)] The word $w'$ admits an RRS $V$ with
decomposition $w' = \nu y_1 \cdots y_{m+1}\eta$.
\item[(ii)] Suppose that $\p{w} \in W$, so that $\gamma$ is a single letter $\xx$, and that $\zeta$ is within $\p{w}$.
Then the RRS $V$ can be chosen to ensure that:
  \begin{mylist}
  \item[(a)] $\eta=\gamma=\xx$;
	  \quad{(b)}\quad $V$ is optimal;
  \item[(c)] if $v,v'$ are the free reductions of the words 
	  ($v\xx^{-1}\xx$ and  $v'\xx^{-1}\xx$, respectively)
	  to which $w,w'$ are transformed by $U,V$,
             then $v$ can be transformed to $v'$ using only commutations 
	       and 2-generator $\tau$-moves.
   \end{mylist}
\end{mylist}
\end{proposition}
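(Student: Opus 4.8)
The plan is to follow the proof of Proposition~\ref{lem:6.1} almost verbatim, with the length-$2$ commutation $\zeta \to \zeta^R$ replaced throughout by the $\tau$-move $\zeta \to \tau(\zeta)$ on the $2$-generator critical word $\zeta$; note that since $\{s,t\}$ is $\{a,b\}$ or $\{b,c\}$ the two generators genuinely fail to commute, so this is a true $\tau$-move rather than a commutation (those being handled by Proposition~\ref{lem:6.1}). First I would carry out the same preliminary reduction for part (ii): Lemma~\ref{lem:2gen_taufacts} controls $\f{\tau(\zeta)}$ and $\l{\tau(\zeta)}$, so with $\zeta$ inside $\p{w}$ we get $\l{w'}=\l{w}=\xx$; granting part (i), the fact that $w'$ arises from $w$ by a single $2$-generator $\tau$-move forces $\p{w'}\in W$ (else $\p{w}\notin W$), whence $\eta=\xx$, and it remains only to prove (i), (ii)(b) and (ii)(c).

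Next I would split on the position of $\zeta$ relative to the decomposition $\mu w_1\cdots w_m w_{m+1}\gamma$, mirroring the eight cases of Proposition~\ref{lem:6.1}: $\zeta$ inside $\mu$ or $\s{\gamma}$; $\zeta$ meeting the $\mu$--$w_1$ boundary; $\zeta$ starting at $\f{w_1}$; $\zeta$ inside some $w_i$ (touching $\l{w_i}$ or not); $\zeta$ straddling $w_i$--$w_{i+1}$; and $\zeta$ reaching $w_{m+1}$ or $\gamma$. Because $\tau(\zeta)$ has the same length as $\zeta$, the block lengths are preserved and the candidate decomposition of $w'$ is obtained by the same bookkeeping as before. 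The essential new ingredient is that $\zeta$ is a genuine critical word interacting with the critical blocks $u_i$, so I would use the $2$-generator machinery to check the modified blocks stay critical: Lemma~\ref{lem:critsubword} to place a critical subword as a critical suffix, Lemma~\ref{lem:2gengeo} to relate the two geodesic forms, and the decompositions $\widehat{u_i}$, $\alpha(u_i)$, $\beta(u_i)$, $(u_i)_r$ to verify that replacing $\zeta$ by $\tau(\zeta)$ yields a word critical of a compatible type whose $\tau$-image matches the one demanded by Definition~\ref{def:RRS}. For optimality I would reuse the reversal argument of Proposition~\ref{lem:6.1} for condition (i): since $\tau$ is an involution on $2$-generator critical words and the reverse of a $\tau$-move is again a $\tau$-move, applying part (i) to the move $\tau(\zeta)\to\zeta$ on $w'$ and invoking uniqueness of the optimal RRS excludes a decomposition starting further right; conditions (ii) and (iii) follow from the optimality of $U$ as in each corresponding case of Proposition~\ref{lem:6.1}.

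For part (ii)(c), where $v$ and $v'$ must now be related by commutations \emph{and} $2$-generator $\tau$-moves, the prototype is the case $\zeta\subseteq\mu$ (or $\zeta\subseteq\s{\gamma}$, or $\zeta$ inside a $w_i$ in a region untouched by $\tau(u_i)$): there $U$ also serves as an RRS for $w'$ and the two reductions differ exactly by the single $\tau$-move on $\zeta$, so $v$ and $v'$ differ by one $2$-generator $\tau$-move; in the boundary cases I expect $v=v'$ or a short explicit sequence, exactly as in Proposition~\ref{lem:6.1}. I anticipate the main obstacle to be the cases where $\zeta$ overlaps, or is contained in, a critical block $u_i$ that is P2G or of type $\{a,b,c\}$: there the $\tau$-move disturbs the internal factorisation of $u_i$ (its $\widehat{u_i}$, $\alpha$, $\beta$, or $(u_i)_r$ part), and I must verify case by case that the resulting block is still critical of an allowed type, that the prescribed $\tau(u_i)$ (and hence the derived next block $u_{i+1}$) is unchanged up to the bookkeeping of Definition~\ref{def:RRS}(i)--(ii), and that $m$ and the optimality conditions survive, possibly after merging or re-splitting adjacent blocks as in Cases 4(b) and 5.
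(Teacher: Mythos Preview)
Your overall shape is right---a case split on the position of $\zeta$---but the proof is \emph{not} a verbatim repeat of Proposition~\ref{lem:6.1}, and the expectation that ``the candidate decomposition of $w'$ is obtained by the same bookkeeping as before'' misses the main new phenomena. Three concrete gaps:

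\textbf{(1) The length $m$ of the RRS changes, and the Case~2/3 boundary is not purely positional.} When the end of $\zeta$ lies at $\f{w_1}$, the correct $V$ has length $m{+}1$: you take $v_1=y_1$ to be the shortest critical \emph{suffix} of $\tau(\zeta)$, and push the old $u_1,\dots,u_{m+1}$ one step to the right. Conversely, when $\zeta$ overlaps $w_1$ and ends at $\l{w_1}$, $V$ has length $m{-}1$. Crucially, which of these applies depends on the criticality type of $u_1$ and on whether $\alpha(u_1)=\emptyword$, not merely on where $\zeta$ sits; the paper's Case~2 carries the side conditions $m=0$, or $u_1$ has a pseudo-generator outside $\{s,t\}$ (resp.\ $u_1$ of type $\{a,b,c\}$ with $\{s,t\}=\{a,b\}$), or $\alpha(u_1)\neq\emptyword$, and Case~3 is the complement. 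Your proposed case list (copied from Proposition~\ref{lem:6.1}) does not make this distinction and so cannot produce the right $V$.

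\textbf{(2) Case~4 forces the type of $u_i$.} If $\zeta$ is a genuine $2$-generator critical subword inside $w_i$, then Lemma~\ref{lem:critsubword} together with Lemma~\ref{lem:properties_optRRS}(i) rules out $u_i$ being P2G of the same type and rules out $\zeta$ being a $\{b,c\}$-subword of a type-$\{a,b,c\}$ block; the only surviving possibility is $u_i$ of type $\{a,b,c\}$ with $\zeta$ an $\{a,b\}$-subword of $(u_i)_q(u_i)_r$. This restriction is what makes the internal replacement tractable (and may change the type of $v_i$ to $\{b,c\}$ when $\zeta=(u_i)_r$ and $\tau(\zeta)=b^{\ii}a^{\jj}b^{\kk}$). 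Your plan mentions the relevant lemmas but does not draw this conclusion, and treating a P2G $u_i$ here would fail.

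\textbf{(3) Case~5 requires $n\ge 5$, and Case~6 is impossible.} When $\zeta$ straddles $w_i$ and $w_{i+1}$, the analysis uses the classification from the proof of Proposition~\ref{prop:unique_optRRS} and the hypothesis $n\ge5$ to exclude, for example, $u_i$ of type $\{a,b\}$ with $u_{i+1}$ of type $\{a,b\}$ and $\zeta$ a $\{b,c\}$-word (this is exactly where Example~\ref{eg:n_atleast5} breaks at $n=4$). And when $\zeta$ ends in $w_{m+1}$ with $m>0$, the commutation structure of $w_{m+1}$ together with $\beta_m=\emptyword$ shows the case cannot occur; this must be argued, not handled. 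None of this is visible in Proposition~\ref{lem:6.1}, so ``almost verbatim'' will not get you there.
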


\begin{proof}
By essentially the same argument as we used at the beginning of the proof of
Proposition~\ref{lem:6.1}, it suffices to prove parts (i), (ii)\,(b) and
(ii)\,(c).

As in the proof of Proposition~\ref{lem:6.1}, 
we split the proof into a list of cases that depend
on where the subword $\zeta$ occurs in the decomposition of $w$.
To help with readability, we shall start by listing the cases and
describing the resulting RRS $V$ of $w$, and in Cases 2--6 we shall
defer the associated proofs until later.

\smallskip
{\bf Case 1:}  $\zeta$ is within $\mu$ or $\s{\gamma}$.

\begin{addmargin}[1em]{0em}
As in Proposition~\ref{lem:6.1}, $w'$ has $U$ as an optimal RRS associated with
a decomposition of $w'$ of the form $\nu w_1\cdots w_{m+1}\eta$, and the results
are straightforward.
\end{addmargin}

\smallskip
{\bf Case 2:} The end of $\zeta$ is at the beginning of $w_1$, and either 
\begin{mylist}
\item[(a)] $m=0$; or 
\item[(b)] $w_1$ is a P2G-critical word one of whose pseudo-generators
is neither $s$ nor $t$, or $w_1$ is a critical word of type $\{a,b,c\}$ and
$\{s,t\}=\{a,b\}$; or 
\item[(c)] $\alpha(u_1) \ne \emptyword$.
\end{mylist}

\begin{addmargin}[1em]{0em}
In all of these cases $V$ has length $m+1$, and $v_1=y_1$ is defined to be the
shortest critical suffix of $\tau(\zeta)$, with $y_2:=\s{w_1}$,
$y_i:=w_{i-1}$ for $i>1$, and $v_i := w_{i-1}$ for $i \ge 1$.
\end{addmargin}

\smallskip
{\bf Case 3:} $w_1$ begins within $\zeta$, and $\zeta$ ends within $w_1$, 
and either
\begin{mylist}
\item[(a)] $\zeta$ ends at the beginning of $w_1$
and all three of the following hold:\\
(1) $m>0$; (2) $w_1$ is either P2G-critical with pseudo-generators $\{s,t\}$
or critical of type $\{a,b,c\}$ with $\{b,c\}=\{s,t\}$; and
(3) $\alpha(u_1)=\emptyword$ (i.e. we are not in Case 2); or
\item[(b)] the intersection $\zeta'$ of the two subwords $\zeta$ and $w_1$
within $w$ has length greater than one.
\end{mylist}

\begin{addmargin}[1em]{0em}
Suppose first that $\zeta$ ends at the end of $w_1$ (this can only happen
within  Case 3(b)).  Then $V$ has length $m-1$, with
$y_1 := \l{\tau(\zeta)} w_2$, $y_i:=w_{i+1}$ for $i>1$, and $v_i:=u_{i+1}$
for $i \ge 1$. This reverses the transformation in Case~2.

Otherwise (we could be in Case 3(a) or 3(b))
$w_1$ has a non-empty suffix $\pi$ that is not part of $\zeta$. In
that case $V$ has length $m$ with $y_1=v_1$ the shortest critical suffix
of $\zeta \pi$ that has the same criticality type as $u_1$,
and $y_i:=w_i$, $v_i :=u_i$ for $i>1$. 
\end{addmargin}

In each of the remaining Cases 4--8, $V$ and $U$ have the same length, with the
beginning of $w_1$ and $y_1$ in the same position in $w$ and $w'$.

\smallskip
{\bf Case 4:} $\zeta$ is a subword of $w_i$ for some $1 \le i \le m$,
but does not start at the beginning of $w_1$ (when $i=1$).

\begin{addmargin}[1em]{0em}
As we shall see in the proof below, the optimality of $U$ implies that this
can happen only when $u_i$ has type $\{a,b,c\}$ and $\zeta$ is an
$\{a,b\}$-critical subword of $(u_i)_q(u_i)_r$ in the notation of
Definition~\ref{def:3gen_critical}.
(So the intersection of $(u_i)_q$ and $\zeta$ is a possibly empty suffix
of $(u_i)_q$, and equal to a power of $b$.)
Recall from Definition~\ref{def:3gen_critical} that $\widehat{(u_i)_r}$
can be transformed to a word $b^{\ii}a^{\jj}b^{\kk}$ by using a succession
of 2-generator $\tau$-moves on $\{a,b\}$-words.

Firstly, if $\zeta = (u_i)_r$ and $\tau(\zeta) = b^{\ii} a^{\jj} b^{\kk}$,
then the component $v_i$ of $V$ has criticality
type $\{b,c\}$ with $y_i:= w_{ip}b^{\ii}$, $v_i := u_{ip}b^{\ii}$,
where $w_i=w_{ip} \zeta$ and $u_i = u_{ip} \zeta$,
$y_{i+1} := a^{\jj} b^{\kk} w_{i+1}$, $v_{i+1}:=u_{i+1}$, and
$y_j:=w_j$, $v_j:=u_j$ for $j \ne i,i+1$.

Otherwise $v_i$ has the same criticality type $\{a,b,c\}$ as $u_i$.
If the end of $\zeta$ is not at the end of $(u_i)_r$ then, after replacing
$\zeta$ by $\tau(\zeta)$ in $w_i$, the resulting word still ends with a
letter with name $a$, and we get $y_i$ and $v_i$ by replacing $\zeta$ by
$\tau(\zeta)$ in $w_i$ and $u_i$ respectively, with $y_j:=w_j$,
$v_j := u_j$ for $j \ne i$.

Finally, if $\zeta$ ends at the end of $(u_i)_r$ then, after replacing $\zeta$
in $w_i$ by $\tau(\zeta)$, the resulting word ends with a nonzero power $b^t$
of $b$, which becomes a prefix of $y_{i+1}$ in the decomposition $V$.
More precisely, writing $w_i = w_{ip} \zeta$, $u_i = u_{ip} \zeta$,
and $\tau(\zeta) = \zeta' b^t$ we have $y_i := w_{ip} \zeta'$,
$v_i := u_{ip} \zeta'$, $y_{i+1} := b^t w_i$, $v_{i+1} := u_{i+1}$,
and $y_j:=w_j$, $u_j := v_j$ for $j \ne i,i+1$.

Note that, in all of the situations arising in Case 4,
applying $V$ to $w'$ gives the same result as applying $U$ to $w$.
so $v'=v$ in part (ii) of the proposition.
\end{addmargin}

\smallskip
{\bf Case 5:} $\zeta$ starts within $w_i$ and $\zeta$ intersects
$w_{i+1}$ non-trivially for some $1 \le i < m$.

\begin{addmargin}[1em]{0em}
By using the optimality of $U$ and the assumption $n \ge 5$, we shall prove
below that there are essentially only two different possibilities that we
need consider for the types of $u_i$ and $u_{i+1}$.

In the first of these,
$u_i$ has type $\{a,b,c\}$ and $u_{i+1}$ has type $\{b,c\}$ or $\{a,b,c\}$,
and we find that $\zeta$ is an $\{a,b\}$-word with $\zeta = \zeta' b^t$ for
some suffix $\zeta'$ of $(u_i)_r$ and prefix $b^t$ of $w_{i+1}$.
Writing $w_{i} = w_{ip} \zeta'$,
$u_i = u_{ip} \zeta'$, and $w_{i+1} = b^t \pi$ for words $w_{ip}$, $u_{ip}$
and $\pi$, the word $w'$ has an RRS $V$ with $y_{i} := w_{ip} \tau(\zeta)$,
$u_i := u_{ip} \tau(\zeta)$, $y_{i+1} := \pi$, $v_{i+1} := u_{i+1}$,
and $y_j:=w_j$, $v_j := u_j$ for $j \ne i,i+1$, where
$\widehat{(v_{i})_r}$ is equivalent under $2$-generator $\tau$-moves to
$b^{\ii}a^{\jj}b^{\kk+t}$ (note that $\kk$ and $t$ must have the same signs).
This reverses the transformation that we described in the final situation
of Case 4, and applying $V$ to $w'$ gives the same result as applying
$U$ to $w$.

The second possibility is that $u_i$ has type $\{b,c\}$ and (again) $u_{i+1}$
has type $\{b,c\}$ or $\{a,b,c\}$, and $\zeta = b^{\ii}a^{\jj}b^{\kk}$
for some nonzero $\ii,\jj,\kk$,
where $w_{i} = w_{ip} b^{\ii}$, $u_i = u_{ip} b^{\ii}$ and
$w_{i+1} = a^{\jj}b^{\kk} \pi$ for some words $w_{ip}$, $u_{ip}$ and $\pi$.
Then the word $w'$ has an RRS $V$ with $y_{i} := w_{ip} \tau(\zeta)$,
$v_i := u_{ip} \tau(\zeta)$, $y_{i+1} := \pi$, $v_{i+1} := u_{i+1}$ and
$y_j:=w_j$, $v_j:=u_j$ for $j \ne i,i+1$, where $v_i$ has type $\{a,b,c\}$.
This reverses the transformation that we described in the first situation
of Case 4, and again applying $V$ to $w'$ gives the same result as applying
$U$ to $w$.

Note also that it was an application of the transformation $w \to w'$ in this
situation that resulted in Example~\ref{eg:tricky_w_in_G} and forced us to
introduce critical words of type $\{a,b,c\}$.
\end{addmargin}

\smallskip
{\bf Case 6:} $\zeta$ ends within $w_{m+1}$ but not in the first letter of
$w_1$ when $m=0$ (which was considered in Case 2). 

\begin{addmargin}[1em]{0em}
We shall prove below that this case cannot occur.
\end{addmargin}

In the final two cases $\zeta$ intersects $\gamma$ non-trivially, so we only
have to prove part (i) of the proposition.

\smallskip
{\bf Case 7:} $\zeta$ intersects both $\gamma$ and $w_mw_{m+1}$ non-trivially.

\begin{addmargin}[1em]{0em}
Since $w_{m+1}$ commutes with $\f{\gamma}$, we must have $w_{m+1}=\emptyword$.
The beginning of $\zeta$ must lie in $w_m$, or we would be in Case 5
(in which case $\zeta$ cannot intersect $\gamma$), and $u_m$ must be a P2G
word by Lemma~\ref{lem:RRSdetails}\,(i). In fact, since the two
pseudo-generators of $u_m$ are the names of $\l{w_m}$ and $\f{\gamma}$, these
must be the generators of $\zeta$.

After applying the first $m-1$ steps of the RRS $U$ to $w$, the resulting word
has the non-geodesic P2G subword $u_m\f{\gamma}$. After replacing $\zeta$ by
$\tau(\zeta)$, we still have a non-geodesic P2G subword starting in the same
place, so we can continue this RRS to get an RRS $V$ of $w'$ (of which the
right-hand end could be to the left of that of the RRS $U$).
\end{addmargin}

\smallskip
{\bf Case 8:} $\zeta$ is a subword of $\gamma$. 

\begin{addmargin}[1em]{0em}
By Case 1 we only need consider the case when 
$\zeta$ is a prefix of $\gamma$. Let $t := \f{\gamma}=\f{\zeta}$.
We apply the first $m$ moves in the RRS $U$ to obtain a word ending
in $t^{-1} w_{m+1} \gamma$. Then $t^{-1} w_{m+1} \zeta$ is a non-geodesic P2G
word, and hence so is $t^{-1} w_{m+1} \tau(\zeta)$, and we can continue our
RRS to obtain an RRS $V$ of $w'$ (of which the 
right-hand end could be to the right of that of the RRS $U$).
\end{addmargin}

\medskip
{\bf Proofs for Cases 2--6.} In these proofs we shall not repeat the
definitions of the RRS $V$ for $w'$ and we leave the reader to verify that
$V$ is indeed an RRS.

{\bf Case 2:}

\begin{addmargin}[1em]{0em}
We need to prove part (ii) of the proposition. We have
$\tau(\zeta) = \rho y_1$ for some word $\rho$ and in part (ii)\,(d)
we obtain the word $v$ from $u$ by replacing its subword $\p{\zeta}$ by
$\rho \p{\tau(y_1)}$. Since these are geodesic words representing the same
element in a $2$-generator Artin group, by Lemma~\ref{lem:2gengeo} we can
obtain one from the other using $\tau$-moves on 2-generator subwords,
which proves part (ii)\,(d).

It remains to prove the optimality of $V$ in part (ii)\,(c).
Optimality condition Definition~\ref{def:optRRS} (ii) is not affected by the
change. The optimality of $U$ together with the equalities $v_i=u_{i-1}$ for
$i>1$, implies that optimality condition (iii) holds for $i>2$.
The conditions (a), (b) and (c) of Case 2, at least one of which must hold,
now ensure that condition (iii) for optimality also holds for $i=2$.
For condition (b) is precisely the conclusion of condition (iii) for $i=2$,
while condition (a) or (c) forces the hypotheses of condition (iii) for $i=2$
to fail.

It is easy to see that $V$ satisfies optimality condition (i) when $m=0$,
so suppose that condition (i) fails with $m>0$.
Since the suffixes of $w$ and $w'$ to the right of the last letter of $y_1$
are the same, application of Procedure~\ref{proc:unique_optRRS} to $w'$ locates
the words $y_{m+2}=u_{m+1}$, $y_{m+1}=u_m$, \ldots, $y_3=u_2$ of $V'$,
in the decomposition of $V$, where $v_2$ has the same type as $u_1=w_1$.
(The argument that these types are the same is similar to that in Case 2
of Proposition~\ref{lem:6.1}: the type is determined by a proper suffix of
$v_2$ and is not affected by a change of its first letter.)

Conditions (b) and (c) of Case 2 ensure that $y_1y_2$ has no critical suffix
of the same type as $u_2$, and by considering
Procedure~\ref{proc:unique_optRRS}, we see it produces the RRS $V$ of length
$m+1$ exactly as we have defined it.  So $V$ is optimal as claimed.
\end{addmargin}


{\bf Case 3:}

\begin{addmargin}[1em]{0em}
Suppose that Case 3(b) holds. Note that the first two letters of $w_1$ have
distinct names by Lemma~\ref{lem:properties_optRRS}\,(ii).
Since the two generators $s,t$ of $\zeta$ do not commute, we have
condition (a)\,(1), that $m>0$.
Further, since $s,t$ are the first two letters of the critical word $w_1=u_1$,
it follows from Definitions~\ref{def:P2G} and~\ref{def:3gen_critical} that
$\alpha(u)_1 = \emptyword$ and so (a)\,(3) holds,
and then  (from those definitions) (a)\,(2) is immediate. 

In either case (a) or (b), if $\zeta$ ends at the end of $w_1$, then
$\beta(u_1) = \emptyword$ and $w_1$ is a $2$-generator critical word
with $\l{\tau(\zeta)} = \l{\tau(w_1)}=\l{\tau(u_1)}$.
So we find the critical word $u_2$ as a subword of $w'$,
starting at the end of the subword $\tau(\zeta)$ of $w'$, and we
define $y_1$ to be that subword.
Since $U$ is optimal, optimality for $V$ in part (ii)\,(c) could only fail if
$y_1$ had a proper critical suffix or condition (i) of
Definition~\ref{def:optRRS} failed for $i=2$. A proper critical suffix of $y_1$
would also be a proper critical suffix of $u_2$, and so within $w_2$, and hence
contradict the optimality of $U$.

Otherwise we have $w_1 = \zeta' w_{1s}$ for some non-empty suffix $\zeta'$
of $\zeta$ and a non-empty suffix $w_{1s}$ of $w_1$, and the word
$\zeta w_{1s}$ is critical of the same type as $w_1$.

Since $U$ is optimal, we only have to verify the optimality 
Definition~\ref{def:optRRS}\,(i) together with (iii) for $i=2$ in part
(ii)\,(c).
Since $w$ and $w'$ differ only to the left of their common 
suffix $w_2\cdots w_{m+1}\gamma$, Procedure~\ref{proc:unique_optRRS}
will locate the same subwords $y_{m+1},\ldots,y_2$ in $w'$ as
the subwords $x_{m+1},\ldots,x_2$ of $w$,
the same location of the right hand end of $y_1$ in $w'$ as that of $w_1$ in
$w$, and the find that the type of $v_1$ is the same as that of $u_1$.
That condition (iii) of Definition~\ref{def:optRRS} also holds, even for $i=2$
is an immediate consequence of the optimality of $U$.
For condition (i) of optimality, Procedure~\ref{proc:unique_optRRS} will
define $y_1=v_1$ exactly as in our definition of $V$, so $V$ is optimal.

In all of these cases, the words $u$ and $v$ in part (ii)\,(d) of the
proposition differ only in $2$-generator geodesic subwords that represent
the same element in the $2$-generator Artin group generated by $s$ and $t$,
and so part (ii)\,(d) holds by Lemma~\ref{lem:2gengeo}.
\end{addmargin}

In each of the remaining cases, the decompositions $U$ and
$V$ of $w$ and $w'$ start in the same position in the words $w$, $w'$. That
is, the left hand ends of $w_1$ and $v_1$ are in the same position.
By a similar argument that we used in Proposition~\ref{lem:6.1}, this implies
that condition (i) of the optimality of $V$ holds in part (ii)\,(c)
in these cases.

{\bf Case 4:}

\begin{addmargin}[1em]{0em}
In this case $\zeta$ is a subword of $u_i$. It follows from
Lemmas~\ref{lem:critsubword} and~\ref{lem:properties_optRRS}\,(i)
that $u_i$ cannot be a P2G-critical word and if $u_i$ is critical of type
$\{a,b,c\}$ then $\zeta$ cannot be a critical $\{b,c\}$-subword of that.

So $u_i$ must be critical of type $\{a,b,c\}$ with
$\zeta$ a critical $\{a,b\}$-subword of the suffix $(u_i)_q{(u_i)}_r$ of $u_i$
in the notation of Definition~\ref{def:3gen_critical}.


We see easily that optimality conditions (ii) and (iii) still hold for $V$
in part (ii)\,(c).

Otherwise, $\zeta$ ends before the end of $(u_i)_r$ and, after replacing
$\zeta$ by $\tau(\zeta)$ in $u_i$, the resulting word is still critical of
type $\{a,b,c\}$, and we get $y_i$ and $v_i$ by making this replacement.

Finally, if $\zeta$ ends at the end of $(u_i)_r$ then, after replacing $\zeta$
in $w_i$ by $\tau(\zeta)$, the resulting word ends with a nonzero power $b^t$
of $b$, which becomes a prefix of $y_{i+1}$ in the decomposition $V$.

It is easy to see that optimality condition (ii) and (iii) hold for $V$
in part (ii)\,(c).
\end{addmargin}

{\bf Case 5:}

\begin{addmargin}[1em]{0em}
\emph{Subcase 5(a)}: Suppose first that $u_i$ is critical of type
$\{a,b\}$ or $\{a,b,c\}$.

If $u_{i+1}$ is critical of type $\{b,c\}$ or $\{a,b,c\}$, then we must have
$\l{u_{i}} = \l{w_{i}} = a^{\pm 1}$.  In that case, $\zeta$ must be an
$\{a,b\}$-word and $\beta(u_{i})$ (which is a power of $c$) must be empty,
and $u_{i+1}=b^{\pm 1}w_{i+1}$ or $u_{i+1}=c^{\pm 1}b^{\kk}w_{i+1}$
when $u_{i}$ has type $\{a,b\}$ or $\{a,b,c\}$, respectively.
So the word $\alpha(u_{i+1})$ is also empty in this case, and
$\f{w_{i+1}}$ cannot have name $a$ and so it must have name $b$.
By Lemma~\ref{lem:properties_optRRS}\,(ii), this cannot happen when
$u_{i}$ has type $\{a,b\}$, so it must have type $\{a,b,c\}$, and
$\zeta = \zeta' b^t$ for some suffix
$\zeta'$ of $(u_{i})_r$ and non-empty prefix $b^t$ of $w_{i+1}$.
Checking the optimality of $V$ in part (ii)\,(c) is straightforward.

Otherwise $u_{i+1}$ is critical of type $\{a,b\}$. 
It follows from Lemma~\ref{lem:RRSdetails}\,(iv)
that this does not happen when $u_{i}$ is of
type $\{a,b,c\}$, so it must have type $\{a,b\}$.
We saw in Case 1(a) of the verification of Step $m+2-i$ within the proof of
Proposition~\ref{prop:unique_optRRS}
that $\beta(u_{i}) = \emptyword$, $\alpha(u_{i+1}) \ne \emptyword$,
$\l{w_{i}}$ has name $b$, and $\f{w_{i+1}}$ has name $c$ in this situation.
So $\zeta$ must be a $\{b,c\}$-word that intersects $w_{i}$ in a power
of $b$. The letter following $\alpha(u_{i+1})$ in $w_{i+1}$ must have name $b$
(by Lemma~\ref{lem:critsubword}).
The power of $b$ that is the subword $\zeta \cap w_{i}$
could be followed by a power of $c$ in the subword $\beta(u_{i+1})$ of
$w_{i+1}$ but, since $u_{i+1}$ is critical of type $\{a,b\}$, this
would then be followed by a letter with name $a$ and,
since we are assuming that $n \ge 5$, this is incompatible with the existence
of the critical subword $\zeta$. (Note that Example~\ref{eg:n_atleast5}
shows that this argument fails when $n=4$.)

\emph{Subcase 5(b)}: Suppose that $u_{i}$ is critical of type $\{b,c\}$.

We showed in Subcase 5(a) that $u_{i}$ and $u_{i+1}$ cannot have types
$\{a,b\}$ and $\{b,c\}$, respectively, and essentially the same argument shows
that $u_{i+1}$ cannot have type $\{a,b\}$ in Subcase 5(b).
So $u_{i+1}$ has type $\{b,c\}$ or $\{a,b,c\}$.
We saw in Cases 2(b) and 3(b) of the proof of
Proposition~\ref{prop:unique_optRRS}
that $\beta(u_{i}) = \emptyword$, $\alpha(u_{i+1}) \ne \emptyword$,
$\l{w_{i}}$ has name $b$, and $\f{w_{i+1}}$ has name $a$ in this situation.
So $\zeta$ must be an $\{a,b\}$-word that intersects $w_{i}$ in a power
of $b$. The letter following $\alpha_i$ in $w_{i+1}$
must have name $b$ and, since $n \ge 5$, this power of $b$ cannot be followed
by a letter with name $a$, so we must have $\zeta = b^{\ii}a^{\jj}b^{\kk}$
for some nonzero $\ii,\jj,\kk$.
Showing the optimality of $V$ in part (ii)\,(c) is straightforward.
\end{addmargin}

{\bf Case 6:}

\begin{addmargin}[1em]{0em}
When $m=0$ and $|w_1| \ge 2$ the first two letters of $w_{m+1}$ are distinct
and commute, so this case cannot occur with $m=0$.
So $m > 0$ and $w_{m+1}$ is a power of $a$ or $c$, and $\zeta$ must intersects
$w_m$ non-trivially.
By Lemma~\ref{lem:RRSdetails}\,(i) $u_m$ cannot have type $\{a,b,c\}$
in an optimal RRS, so it has type $\{a,b\}$ or $\{b,c\}$ with
$\beta_m=\emptyword$.  Since $w_{m+1}$ commutes with one of the
pseudo-generators of $u_m$, this case cannot occur.
\end{addmargin}
\end{proof}

\begin{proposition}\label{lem:6.3}
We have $[wac] = [wca]$ for all $w \in W$.
\end{proposition}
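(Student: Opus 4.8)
The statement $[wac]=[wca]$ is to be read as the identity $([w]a)c=([w]c)a$ in $\Omega$, under the well-defined right action of the free group on $\Omega$ that has already been constructed. Writing $\phi(s,x)$ for a fixed word in $W$ with $[s]x=[\phi(s,x)]$ (so $\phi(s,x)=sx$ when $sx\in W$, and otherwise $\phi(s,x)$ is the length-$(|s|-1)$ word to which the unique optimal RRS of Proposition~\ref{prop:unique_optRRS} reduces $sx$), the claim reduces to showing
\[\phi(\phi(w,a),c)\ \sim\ \phi(\phi(w,c),a).\]
The plan is to prove this by a case analysis on which of the words $wa$, $wc$, $wac$, $wca$ lie in $W$, using Proposition~\ref{lem:6.1} as the basic transport tool for pushing an $a,c$-commutation through an application of an RRS.

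First I would record two book-keeping facts. By Proposition~\ref{lem:6.1}\,(i), applied in both directions, $wac\in W$ if and only if $wca\in W$. Moreover, if $wac\in W$ then both $wa$ and $wc$ lie in $W$: if, say, $wa\notin W$, then $wa$ admits an RRS whose final free cancellation removes the appended $a$, and appending $c$ turns this into an RRS for $wac$ with final segment $\gamma=ac$, contradicting $wac\in W$; the corresponding statement for $wc$ then follows by combining this with the first fact. With these in hand the easy case is immediate: if $wac\in W$, then $wca\in W$ and $wa,wc\in W$, so $[w]ac=[wa]c=[wac]$ and $[w]ca=[wc]a=[wca]$, while $[wac]=[wca]$ because the two words differ by a single $a\!\leftrightarrow\! c$ commutation; hence $[w]ac=[w]ca$.

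The substance of the proof is the complementary case $wac\notin W$ (equivalently $wca\notin W$), which I would organise along the same lines as the case list in the proof of Proposition~\ref{lem:6.1}, according to whether each of the two single-letter appends triggers a reduction and where, within the relevant optimal RRS, the appended letters sit. When a reduction on one side leaves untouched a prefix $\mu$ that already contains the interaction, Proposition~\ref{lem:6.1}\,(ii) applies essentially verbatim: commuting the internal $ac/ca$ pair produces, via the optimal RRS, a reduced word related to the original reduced word by commutations of $a$ and $c$ only, and hence $\sim$-equivalent to it. As in the proof of Proposition~\ref{lem:6.2}, I expect several of the a~priori possible configurations to be ruled out by Lemma~\ref{lem:RRSdetails} and the optimality conditions of Definition~\ref{def:optRRS}.

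The main obstacle is the genuinely interacting cross-cases, in which both appends reduce and the appended letter is itself consumed by the free cancellation, so that the commuted pair $ac\leftrightarrow ca$ lies at the very end of the word rather than inside its proper prefix; there the hypotheses of Proposition~\ref{lem:6.1}\,(ii) (which require the commuted pair to lie within $\p{w}$) do not apply directly. I would treat these by induction on $|w|$: peeling off the common untouched prefix of the two optimal RRS should reduce the comparison to a strictly shorter instance of the same statement, after which a single application of Proposition~\ref{lem:6.1}, now with the commuted pair genuinely internal, closes the argument. Verifying that the two reduction processes can always be aligned in this way, and that the residual configurations which resist such alignment do not in fact occur when $n\ge 5$, is the delicate point, and is exactly where the close parallel with the proof of Lemma~(6.3) of~\cite{BCMW} is most useful.
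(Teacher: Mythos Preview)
Your easy case ($wac\in W$) is correct and matches the paper. But for the complementary case you have missed the one observation that makes this proposition almost trivial, and in its place you are proposing machinery (case analysis along the lines of Proposition~\ref{lem:6.1}, induction on $|w|$, alignment of two optimal RRS's) that is neither needed nor clearly workable.

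The point is that $a$ and $c$ commute, so the letter appended \emph{second} is simply absorbed into $w_{m+1}$ of the optimal RRS and does not interact with the $\tau$-moves at all. Concretely: suppose $wa\in W$. Then $wac\notin W$ if and only if $wc\notin W$. For if $wac\notin W$, its optimal RRS has $\gamma=c$ and (since the final $a$ commutes with $c$) the appended $a$ lies in $w_{m+1}$; deleting that $a$ from $w_{m+1}$ gives an RRS for $wc$. Conversely any RRS for $wc$ extends to one for $wac$ by appending $a$ to $w_{m+1}$. In the subcase $wc\notin W$ (equivalently $wac\notin W$), write $w\sim w'c^{-1}$; then $wa\sim w'ac^{-1}$, whence $[wa]c=[w'ac^{-1}]c=[w'a]$ and $[wc]a=[w']a=[w'a]$. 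Together with your easy case this handles $wa\in W$ completely; the case $wc\in W$ is symmetric.

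When neither $wa$ nor $wc$ lies in $W$, no induction is needed either: from $wa\notin W$ write $w\sim w'a^{-1}$; then $w'a^{-1}c\sim wc\notin W$, and the same absorption argument (the final $a^{-1}$ sits in $w_{m+1}$) shows $w'c\notin W$, so $w'\sim w''c^{-1}$, and one checks directly that $[w]ac=[w'']=[w]ca$. This is the paper's entire proof.

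Your proposed route is not demonstrably wrong, but the step ``peeling off the common untouched prefix of the two optimal RRS's'' is unjustified as stated: the optimal RRS's for $wa$ and $wc$ have different $\gamma$'s and different $w_{m+1}$'s, and nothing you have said forces their earlier structure to agree. In any event the induction is unnecessary once you notice that the commuting letter never leaves $w_{m+1}$.
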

\begin{proof}
Suppose first that $wa \in W$. Then $wac \not\in W$ if and only if $wac$ admits
an RRS the application of which replaces the final $a$ of $wa$ by $c^{-1}$,
in which case $w$ is equivalent to a word of form $w' c^{-1}$.
But that is the case if and only if $wc \not \in W$, in which case $[wac]=[wca]
=[w'a]$. Otherwise $wac$ and $wc$ are both in $W$, in which case so is $wca$
by Proposition~\ref{lem:6.1} and, since $wac$ and $wca$ are equivalent words
in $W$, we again have $[wac]=[wca]$. The proof when $wc \in W$ is analogous.

It remains to deal with the case when neither $wa$ no $wc$ is in $W$.
Then $w$ is equivalent to a word of form $w' a^{-1}$. Since
$w' a^{-1} c \not\in W$, reasoning as in the previous paragraph, we see that
$w'$ is equivalent to a word of form $w'' c^{-1}$, and then $[wac]=[wca]=w''$.
\end{proof}

\begin{proposition}\label{lem:6.4}
We have $[waba] = [wbab]$  and $[w{}\cdot{}_n(b,c)] = [w{}\cdot{}_n(c,b)]$
for all $w \in W$.
\end{proposition}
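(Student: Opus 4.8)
The plan is to establish both equalities, $[waba]=[wbab]$ and $[w\cdot{}_n(b,c)]=[w\cdot{}_n(c,b)]$, by the same strategy that was used for Proposition~\ref{lem:6.3}, but now relying on the more general transformation results of Proposition~\ref{lem:6.2} (and for the second identity, on the fact that the $\{a,b,c\}$-critical words were introduced precisely to handle the $(b,c)$ relation rightward). The key structural observation is that each of these relations is an equality of two geodesic words in a $2$-generator parabolic subgroup (namely $aba=_G bab$ in $\langle a,b\rangle$, and ${}_n(b,c)=_G{}_n(c,b)$ in $\langle b,c\rangle$), and that passing from one side to the other is a single $\tau$-move on a $2$-generator critical subword. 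So the whole argument will run through the case analysis of whether the relevant prefixes lie in $W$.

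For the first identity, I would argue exactly as in Proposition~\ref{lem:6.3}, reading $aba$ and $bab$ in place of $ac$ and $ca$. First suppose $wab\in W$. Then $waba\notin W$ precisely when $waba$ admits an RRS whose application cancels the final $a$, forcing $wab$ to be equivalent to some $w'a^{-1}$; and by Proposition~\ref{lem:6.2}\,(i) this is an $\sim$-invariant condition, so the same holds with $aba$ replaced by $bab$ via the $\tau$-move $\tau(aba)=bab$. In the complementary subcase where both $waba$ and $wbab$ lie in $W$, they are $\sim$-equivalent words in $W$ (one obtained from the other by the single $2$-generator $\tau$-move on the subword $aba$), so $[waba]=[wbab]$ directly. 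The remaining cases, where $wab\notin W$ or $wa,wb\notin W$, are dispatched by the same reduction-to-a-shorter-word bookkeeping as in the proof of Proposition~\ref{lem:6.3}, using Proposition~\ref{lem:6.2}\,(ii)\,(c) to guarantee that the two optimal RRS reductions produce $\sim$-equivalent shorter words $v\sim v'$.

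For the second identity, the same template applies with the subword $\zeta={}_n(b,c)$, which is a $2$-generator critical $\{b,c\}$-word with $\tau(\zeta)={}_n(c,b)$, so that replacing one alternating product by the other is a single $2$-generator $\tau$-move. Again Proposition~\ref{lem:6.2}\,(i) shows that admitting an RRS is preserved under this substitution, and Proposition~\ref{lem:6.2}\,(ii)\,(b),(c) show that when both words lie in $W$ they are $\sim$-equivalent and when neither does the two reductions land on $\sim$-equivalent shorter words. The case split on which of the relevant prefixes of $w\cdot{}_n(b,c)$ lie in $W$ is organised exactly as before, peeling off the trailing letter and invoking the inductive structure of the $\sim$-relation.

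The main obstacle I anticipate is \emph{not} the $\{a,b\}$ relation, which is essentially a verbatim copy of Proposition~\ref{lem:6.3}, but rather the bookkeeping in the $(b,c)$ case when the subword ${}_n(b,c)$ straddles the boundary between the already-reduced prefix and the appended letters, since this is exactly the configuration (Case~5 of Proposition~\ref{lem:6.2}) that forced the introduction of $\{a,b,c\}$-critical words. I would lean on the fact that Proposition~\ref{lem:6.2} has already absorbed that difficulty into its statement: its conclusion (ii)\,(c) asserts $v\sim v'$ regardless of which case occurs, so no fresh analysis of the $\{a,b,c\}$-moves is needed here, and the delicacy is confined to verifying that the hypothesis $\p{w}\in W$ of Proposition~\ref{lem:6.2}\,(ii) actually holds at each step, which follows because every prefix of a word in $W$ is again in $W$.
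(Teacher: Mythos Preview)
Your plan has a genuine gap. The template of Proposition~\ref{lem:6.3} does not carry over as directly as you suggest. In Proposition~\ref{lem:6.3} the two sides $ac$ and $ca$ have length~$2$, so the case analysis on whether $wa$, $wc$ lie in $W$ is short and the two computations can be lined up. For $aba$ versus $bab$ the action $[w]aba$ is computed through the intermediate classes $[w]a$ and $[w]ab$, while $[w]bab$ is computed through $[w]b$ and $[w]ba$; the $W$-membership of $wa,wab$ is \emph{a priori} unrelated to that of $wb,wba$, so assuming $wab\in W$ tells you nothing about how the computation of $[w]bab$ proceeds. Your ``remaining cases'' therefore hide a much larger case analysis than you acknowledge, and it is not clear it terminates.

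There is also a concrete obstruction to your use of Proposition~\ref{lem:6.2}\,(ii)\,(c): that part requires the critical subword $\zeta$ to lie within $\p{w}$, i.e.\ strictly to the left of the final letter. The move you want, $aba\leftrightarrow bab$, is on the \emph{suffix} of $waba$, so part~(ii) does not apply; part~(i) only guarantees that $wbab$ admits \emph{some} RRS, not that its reduction is $\sim$-equivalent to that of $waba$. Even granting $wab\sim va^{-1}$ and $wba\sim v'b^{-1}$, deducing $v\sim v'$ from $waba\sim wbab$ is precisely the missing step.

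The paper avoids all of this by a different device. It first replaces $w$ by a $\sim$-equivalent word whose $\{a,b\}$-suffix $u$ is as long as possible, writing $w=w'u$ with $w'$ not equivalent to any word ending in a letter named $a$ or~$b$. It then shows, by analysing the optimal RRS and invoking the maximality of~$u$, that every reduction arising in the step-by-step computation of $[w]aba$ is a length-$1$ RRS on a $2$-generator $\{a,b\}$-critical word lying entirely inside the suffix $u\cdot aba$ (and similarly for $u\cdot bab$). This confines the whole computation to the dihedral parabolic $\langle a,b\rangle$, where Lemma~\ref{lem:2gengeo} finishes the argument. The $(b,c)$ relation is handled analogously.
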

\begin{proof}
We prove that $[waba] = [wbab]$. The proof that
$[w\cdot{}_n(b,c)] = [w\cdot{}_n(c,b)]$ is analogous.
We start by replacing $w$ by
an equivalent word with suffix a longest possible $\{a,b\}$-word. So
$w = w' u$, where $u$ is an $\{a,b\}$-word, and $w'$ is not equivalent to
any word ending in a letter with name $a$ or $b$.

We claim that all reductions resulting from applications of RRS's when
computing $[waba]$ result from RRS's of length $1$ consisting of a single
$2$-generator critical $\{a,b\}$-word
that does not intersect the prefix $w'$ of
the word, and similarly for $[wbab]$. Lemma~\ref{lem:2gengeo} will then imply
that $[waba]$ and $[wbab]$ are both equal to $[w'v]$, where $v$ is a geodesic
representative of $uaba =_G ubab$ in the $2$-generator Artin group
$\langle a,b \mid aba=bab \rangle$.

To prove the claim, suppose not. Then there are words $w'v$ and $w'vx$ with
$v$ an $\{a,b\}$-word, $x \in \{a,a^{-1},b,b^{-1}\}$, $w'v \in W$,
and $w'vx \not\in W$, but $vx \in W$. Let $U$ be the optimal RRS of $w'vx$,
which replaces $w'v$ by a word ending in $x^{-1}$. If $v$ is empty, then $w'$
is equivalent to a word ending in $x^{-1}$, contradicting the choice of $w$.
So $v$ is non-empty and, in the decomposition associated with $U$, $w_{m+1}$
is empty, and $\l{v} = \l{w_m}$ has name different from $x$, so $u_m$
must be critical of type $\{a,b\}$. Furthermore since $vx \in W$ we
must have $m > 1$. We cannot have $\l{w_{m-1}}$ in $v$ because that would mean
$u_{m-1}$ was critical of type $\{a,b\}$ or $\{a,b,c\}$, contradicting
condition (iii) of the optimality of $U$. So $\l{w_{m-1}}$ is in $w'$. In fact
either $\l{w_{m-1}} = \l{w'}$ or $w'$ has a suffix consisting of $\l{w_{m-1}}$
followed by a power of $c$, where $\l{\tau(u_{m-1})} = a^{\pm 1}$. In either
case $w'$ is equivalent to a word ending in a letter in $\{a,a^{-1},b,b^{-1}\}$,
contradicting the choice of $w$.
\end{proof}

\end{document}